\newcommand{\sw}{{\scriptstyle{\mathcal{W}}}}
\newcommand{\sLambda}{\mathcal{D}} 
\newcommand{\cmark}{\ding{51}}%
\newcommand{\xmark}{\ding{53}}%
\DeclareMathAlphabet{\mathbbold}{U}{bbold}{m}{n}
\m@th\displaystyle{##}$}{$\m@th\displaystyle{##}$\hfil}{\lbrace}{.}
\def\E{{\mathbb E}}
\def\T{{\mathsf T}}
\def\w{{\boldsymbol \sw}}
\def\blambda{{\boldsymbol \lambda}}
\def\v{{\boldsymbol v}}
\def\x{{\boldsymbol x}} 
\def\d{{\boldsymbol d}}
\def\h{{\boldsymbol h}}
\def\u{{\boldsymbol u}}
\def\z{{\boldsymbol z}}
\def\Tr{{\mathrm{Tr}}}
\def\det{{\mathrm{det}}}
\renewcommand{\qed}{\hfill$\blacksquare$}
\newtheorem{theorem}{Theorem}
\newtheorem{lemma}{Lemma}
\newtheorem{corollary}{Corollary}
\newtheorem{definition}{Definition}
\newtheorem{example}{Example}
\title{Stability and Performance Limits of\\ Adaptive Primal-Dual Networks}
\begin{document}
%

%
%
%

\author{Zaid~J.~Towfic,~\IEEEmembership{Member,~IEEE,}
        and~Ali~H.~Sayed,~\IEEEmembership{Fellow,~IEEE}
        
\thanks{
This work was supported in part by NSF grants CCF-1011918 and ECCS-1407712. A short version of this work appears in \cite{towfic2015distributedprimal}.}
\thanks{
Zaid J. Towfic is with MIT Lincoln Laboratory, Lexington, MA. Email: ztowfic@ucla.edu.
}
\thanks{
A. H. Sayed is with the Department of Electrical Engineering,
University of California, Los Angeles, CA 90095. Email: sayed@ee.ucla.edu.
This work was completed while Z. J. Towfic was a PhD student at UCLA.
}
}

\maketitle

\begin{abstract}
	This work studies  distributed primal-dual strategies for adaptation and learning over networks from streaming data. Two first-order methods are considered  based on the Arrow-Hurwicz (AH) and augmented Lagrangian (AL) techniques. Several revealing results are discovered in relation to the performance and stability of these strategies when employed over adaptive networks. The conclusions establish that the advantages that these methods exhibit for deterministic optimization problems do not necessarily carry over to stochastic optimization problems. It is found that they have narrower stability ranges and worse steady-state mean-square-error performance than primal methods of the consensus and diffusion type. It is also found that the AH technique can become unstable under a partial observation model, while the other techniques are able to recover the unknown under this scenario. A method to enhance the performance of AL strategies is proposed by tying the selection of the step-size to their regularization parameter.  It is shown that this method allows the AL algorithm to approach the performance of consensus and diffusion strategies but that it remains less stable than these other strategies.
\end{abstract}

\begin{IEEEkeywords}
Augmented Lagrangian, Arrow-Hurwicz algorithm, dual methods, diffusion strategies, consensus strategies, primal-dual methods, Lagrangian methods.
\end{IEEEkeywords}

\section{Introduction}
%
%
%
%
\IEEEPARstart{D}{istributed} estimation is the task of estimating and tracking slowly drifting parameters by a network of agents, based solely on local interactions. In this work, we focus on distributed strategies that enable {\em continuous} adaptation and learning from streaming data by relying  on stochastic gradient updates that employ {\em constant} step-sizes. The resulting networks become adaptive in nature, which means that the effect of gradient noise never dies out and seeps into the operation of the algorithms.  For this reason, the design of such networks requires careful analysis in order to assess performance and provide convergence guarantees.

Many efficient algorithms have already been proposed in the literature for inference over networks  \cite{Li20082599,Krishnamurthy,Takahashi2008139,tsianos2012distributed,cassio,Cattivelli10,NOW_ML,ram2010distributed,chen2011TSPdiffopt,theodoridis2011adaptive,chouvardas2011adaptive,nedic2009distributed,dimakis2010gossip,kar2011converegence, schizas2009distributed,rabbat2005quantized,dini2012cooperative,Scaglione} such as consensus  strategies \cite{nedic2009distributed,dimakis2010gossip,kar2011converegence, schizas2009distributed} and diffusion strategies \cite{cassio,Cattivelli10,ram2010distributed,chen2011TSPdiffopt,NOW_ML}, \cite{chouvardas2011adaptive,SayedProcIEEE,sayed2013diffusion,sayed2012diffbookchapter}. These strategies belong to the class of {\em primal} optimization techniques since they rely on estimating and propagating the primal variable. Previous studies have shown that sufficiently small step-sizes enable these strategies to learn well and in a stable manner. Explicit conditions on the step-size parameters for mean-square stability, as well as closed-form expressions for their steady-state mean-square-error performance already exist (see, e.g., \cite{SayedProcIEEE,NOW_ML} and the many references therein). 
Besides primal methods, in the broad optimization literature, there is a second formidable class of techniques known as {\em primal-dual}  methods such as the Arrow-Hurwicz (AH) method \cite{arrow1958studies,poliak1987introduction} and the augmented Lagrangian (AL) method \cite{poliak1987introduction,bertsekas1999nonlinear}. These methods rely on propagating two sets of variables: the primal variable and a dual variable. The main advantage relative to primal methods, for deterministic optimization, is their ability to avoid ill-conditioning when solving constrained problems \cite[pp.~244-245]{poliak1987introduction} \cite[pp.~630-631]{griva2009linear}.

In contrast to existing useful studies on primal-dual algorithms (e.g., \cite{Paolo,boyd2011distributed}), we shall examine this class of strategies in the context of adaptive networks, where the optimization problem is \emph{not} necessarily static anymore (i.e., its minimizer can drift with time) and where the exact form of the cost function is \emph{not} even known. To do so, we will need to develop distributed variants that can learn directly and continuously from streaming data when the statistical distribution of the data is unknown.  It turns out that under these conditions, the dual function cannot be determined explicitly any longer, and, consequently, the conventional computation of the optimal primal and dual variables cannot assume  knowledge of the dual function.  We will address this difficulty by employing {\em constant} step-size adaptation and {\em instantaneous} data measurements to approximate the search directions. When this is done, the operation of the resulting algorithm becomes influenced by gradient noise, which measures the difference between the desired search direction and its approximation. This complication alters the dynamics of primal-dual techniques in non-trivial ways and leads to some surprising patterns of behavior in comparison to primal techniques. 
Before we comment on these findings and their implications, we remark that the stochastic-gradient versions that we develop in this work can be regarded as a first-order variation of the useful algorithm studied in \cite[p.~356]{Paolo} with one key difference; this reference assumes that the cost functions are known exactly to the agents and that, therefore, the dual function can be determined explicitly. In contrast, we cannot make this assumption in the adaptive context. 

One of the main findings in this article is that the studied {\em adaptive} primal-dual strategies turn out to have a smaller stability range and degraded performance in comparison to consensus and diffusion strategies.  This result implies that AH and AL techniques are not as effective for adaptation and learning from streaming data as the primal versions that are based on consensus and diffusion constructions. As explained further ahead, one main reason for this anomaly is that the distributed AH and AL strategies enlarge the state dimension of the network in comparison to primal strategies and they exhibit an asymmetry in their update relations; this asymmetry can cause an unstable growth in the state of the respective networks. In other words, the advantages of AH and AL techniques that are well-known in the static optimization context do not necessarily carry over to the stochastic context. We also remark that in contrast to the earlier work \cite{schizas2009distributed}, we do not require the availability of special {\em bridge} nodes in addition to the regular nodes. In our formulation, the focus is on networks that consist of homogeneous nodes with similar capabilities and responsibilities and without imposing constraints on the network topology. On the other hand, the articles \cite{mateos2009performance,mateos2012distributed} study a variant of the LMS problem, which is studied in this work and \cite{schizas2009distributed}, that is based on a stochastic alternating direction method of multipliers (ADMM) and RLS schemes as opposed to the AH and AL methods. In contrast to \cite{mateos2009performance,mateos2012distributed}, our focus is not just on the theoretical steady-state performance or tracking performance in the midst of link noise, but on the stability of the AH and AL algorithm as a function of the number of nodes in the network as well as the critical role that the augmentation of the Lagrangian plays in the AL algorithm.

A second important conclusion relates to the behavior of AH and AL strategies under the partial observation model. This model refers to the situation in which some agents may not be able to estimate the unknown parameter on their own, whereas the aggregate  information from across the entire network is sufficient for the recovery of the unknown vector through local cooperation.  We discover that the AH strategy can fail under this condition, i.e., the AH network can fail to recover the unknown and become unstable even though the network has sufficient information to allow the agents to arrive at the unknown. This is a surprising conclusion and it can be illustrated analytically by means of examples. In comparison, we show that the AL, consensus, and diffusion strategies are able to recover the unknown under the partial observation model. 

We further find that the stability range for the AL strategy depends on two factors: the size of its regularization parameter and the network topology.  This means that even if all individual agents are stable and able to solve the inference task on their own, the condition for stability of the AL strategy will still depend on how these agents are connected to each other. A similar behavior is exhibited by consensus networks \cite{tu2012diffusion}. This property is a disadvantage in relation to diffusion strategies whose stability ranges have been shown to be independent of the network topology \cite{tu2012diffusion,SayedProcIEEE}. 

We also examine the steady-state mean-square-deviation (MSD) of the primal-dual adaptive strategies and discover that the AH method achieves the same MSD performance as non-cooperative processing. This is a disappointing property since the algorithm employs cooperation, and yet the agents are not able to achieve better performance. On the other hand, the AL algorithm improves on the performance of non-cooperative processing, and can be made to approach the performance of diffusion and consensus strategies as the regularization parameter is increased. This means that the AL algorithm must utilize very small step-sizes to approach the performance that other distributed algorithms can achieve with reasonable parameter values.

\textbf{Notation}: Random quantities are denoted in boldface. The notation $\otimes$ represents the Kronecker product while the notation $\otimes_b$ denotes the Tracy-Singh block Kronecker product \cite{TracySingh}. Throughout the manuscript, all vectors are column vectors with the exception of the regression vector $u$, which is a row vector. Matrices are denoted in capital letters, while vectors and scalars are denoted in lowercase letters. Network variables that aggregate variables across the network are denoted in calligraphic letters.

\section{Adaptive Primal Strategies}
In this section, we describe the problem formulation and review the two main primal techniques: diffusion and consensus for later user. Thus, consider a connected network of $N$ agents that wish to estimate a real $M \times 1$ parameter vector $w^o$ in a distributed manner. Each agent $k=1,2,\ldots,N$  has access to real scalar observations $\d_k(i)$ and zero-mean real $1 \times M$ regression vectors $\u_{k,i}$ that are assumed to be related via the model:
\begin{align}
	\d_k(i) = \u_{k,i} w^o + \v_k(i) \label{eq:ProblemFormulation:LinearModel}
\end{align}
where $\v_k(i)$ is zero-mean real scalar random noise, and $i$ is the time index. Models of the form \eqref{eq:ProblemFormulation:LinearModel} arise in many useful contexts such as in applications involving channel estimation, target tracking, equalization, beamforming, and localization \cite{Sayed08,sayed2013diffusion,sayed2012diffbookchapter}.  We denote the second-order moments by 
\begin{equation}
	R_{u,k} \!=\! \E \u_{k,i}^\T \u_{k,i},\quad\!\!	r_{du,k} \!=\! \E \u_{k,i}^\T \d_k(i),\quad\!\! \sigma_{v,k}^2 \!=\! \E \v_{k}^2(i)  \label{eq:R_u,k}
\end{equation}
and assume that the regression and noise processes are each temporally and spatially white. We also assume that $\u_{k,i}$ and $\v_{\ell}(j)$ are independent of each other for all $k,\ell$ and $i,j$. We allow for the possibility that some individual covariance matrices, $R_{u,k}$, are singular but assume that the sum of all covariance matrices across the agents is positive-definite:
\begin{align}
	\sum_{k=1}^N R_{u,k} > 0 \label{eq:partial_observability}
\end{align}
This situation corresponds to the {\em partial observation} scenario where some of the agents may  not able to solve the estimation problem on their own, and must instead cooperate with other nodes in order to estimate $w^o$.

To determine $w^o$, we consider an optimization problem involving an aggregate mean-square-error cost function:
\begin{align}
	\min_w \frac{1}{2}\sum_{k=1}^N \E (\d_k(i) - \u_{k,i} w)^2 \label{eq:formulation_non_distributed_opt}
\end{align}

\noindent It is straightforward to verify that $w^o$ from \eqref{eq:ProblemFormulation:LinearModel} is the unique minimizer of \eqref{eq:formulation_non_distributed_opt}. Several useful algorithms have been proposed in the literature to solve \eqref{eq:formulation_non_distributed_opt} in a distributed manner. We are particularly interested in adaptive algorithms that operate on streaming data and do not require knowledge of the underlying signal statistics.  Some of the more prominent methods in this regard are consensus-type strategies  \cite{nedic2009distributed,dimakis2010gossip,kar2011converegence,schizas2009distributed} and diffusion strategies \cite{cassio,Cattivelli10,sayed2013diffusion,sayed2012diffbookchapter,SayedProcIEEE}. The latter class has been shown in \cite{tu2012diffusion,sayed2013diffusion,SayedProcIEEE} to have superior mean-square-error and stability properties when constant step-sizes are used to enable continuous adaptation and learning, which is the main focus of this work. There are several variations of the diffusion strategy. It is sufficient to focus on the adapt-then-combine (ATC) version due to its enhanced performance; its update equations take the following form.
\begin{algorithm}[H]
\caption{Diffusion Strategy (ATC)} 
\label{alg:Diffusion}
\begin{subequations}
\begin{align}
	\boldsymbol{\psi}_{k,i} &= \boldsymbol{w}_{k,i-1} + \mu \u_{k,i}^\T (\d_k(i) - \u_{k,i} \boldsymbol{w}_{k,i-1}) \label{eq:diff_A}\\
	\boldsymbol{w}_{k,i} &= \sum_{\ell \in \mathcal{N}_k} a_{\ell k} \boldsymbol{\psi}_{\ell,i} \label{eq:diff_C}
\end{align}
\end{subequations}
\end{algorithm}
\noindent where $\mu > 0$ is a small step-size parameter and $\mathcal{N}_k$ denotes the neighborhood of agent $k$. Moreover, the coefficients $\{a_{\ell k}\}$ that comprise the $N \times N$ matrix $A$ are non-negative convex combination coefficients that satisfy the conditions:
\begin{align}
a_{\ell k}\geq0,\;\;\;\;
\sum_{\ell\in{\cal N}_{k}} a_{\ell k}=1,\;\;a_{\ell k}=0\;\mbox{\rm if }\ell\notin{\cal N}_k
\end{align}
In other words, the matrix $A$ is left-stochastic and satisfies $A^T\mathds{1}_N=\mathds{1}_N$. In \eqref{eq:diff_A}--\eqref{eq:diff_C}, each agent $k$ first updates its estimate $\boldsymbol{w}_{k,i-1}$ to an intermediate value by using its sensed data $\{\d_{k}(i),\u_{k,i}\}$ through \eqref{eq:diff_A}, and subsequently aggregates the information from the neighbors through \eqref{eq:diff_C}. In comparison, the update equations for the consensus strategy take the following form.
\begin{algorithm}[H]
\caption{Consensus Strategy}
\label{alg:consensus}
\begin{subequations}
\begin{align}
	\boldsymbol{\phi}_{k,i-1} &= \sum_{\ell \in \mathcal{N}_k} a_{\ell k} \boldsymbol{w}_{\ell,i-1} \label{eq:consensus_1}\\
	\boldsymbol{w}_{k,i} &= \boldsymbol{\phi}_{k,i-1} + \mu \u_{k,i}^\T (\d_k(i) - \u_{k,i} \boldsymbol{w}_{k,i-1}) \label{eq:consensus_2}
\end{align}
\end{subequations}
\end{algorithm}
\noindent It is important to note the asymmetry in the update \eqref{eq:consensus_2} with both $\{\boldsymbol{\phi}_{k,i-1},\boldsymbol{w}_{k,i-1}\}$ appearing on the right-hand side of \eqref{eq:consensus_2}, while the \emph{same} state variable $\boldsymbol{w}_{k,i-1}$ appears on the right-hand side of the diffusion strategy \eqref{eq:diff_A}. This asymmetry has been shown to be a  source for instability in consensus-based solutions \cite{tu2012diffusion,sayed2013diffusion,SayedProcIEEE}. 

A connected network is said to be strongly-connected when at least one $a_{kk}$ is strictly positive; i.e., there exists at least one agent with a self-loop, which is reasonable since it means that at least one agent in the network should have some trust in its own data. For such networks, when mean-square-error stability is ensured by using sufficiently small step-size parameters, the steady-state deviation (MSD) of the consensus and diffusion strategies can be shown to match to first-order in $\mu$ \cite{SayedProcIEEE}. Specifically, when $A$ is doubly-stochastic, it holds that: 
\begin{subequations}
\begin{align}
	\mathrm{MSD} 	 &\triangleq \lim_{i\rightarrow\infty} \frac{1}{N} \sum_{k=1}^N \E \|w^o - \boldsymbol{w}_{k,i}\|^2 \label{eq:network_MSD}\\
					 &= \frac{\mu}{2N} \Tr\left(\!\!\left(\sum_{k=1}^N R_{u,k}\right)^{\!\!-1} \!\!\left(\sum_{k=1}^N \sigma_{v,k}^2 R_{u,k}\right)\!\!\right) + O(\mu^{2}) \label{eq:diff_cons_MSD}
\end{align}
\end{subequations}
We emphasize the fact that the network need not be strongly-connected to ensure stability of the algorithm (convergence in the mean or mean-square error), but is only necessary to obtain an MSD expression of the form \eqref{eq:diff_cons_MSD}. In addition, more general expressions in the case where the matrix $A$ is not doubly-stochastic can be found in \cite{NOW_ML}. However, for the remainder of the manuscript, we will be focused on comparing the AH and AL algorithms to primal schemes with a doubly-stochastic combination matrix $A$.

It can be further shown that these strategies are able to equalize the MSD performance across the individual agents \cite{SayedProcIEEE,NOW_ML}. Specifically, if we let $\tilde{\boldsymbol{w}}_{k,i}=w^o-\boldsymbol{w}_{k,i}$, and define the individual MSD values as 
\begin{align}
\mathrm{MSD}_k \triangleq \lim_{i\rightarrow\infty} \E \|\tilde{\boldsymbol{w}}_{k,i}\|^2 
\end{align}
then it holds that $\mathrm{MSD}_k\;\doteq\;\mathrm{MSD}$ where the notation $a \doteq b$ means that the quantities $a$ and $b$ agree to first-order in $\mu$. This is a useful conclusion and it shows that the consensus and diffusion strategies are able to drive the estimates at the individual agents towards agreement within  $O(\mu)$ from the desired solution $w^o$ in the mean-square-error sense. Furthermore, it can be shown that the diffusion strategy \eqref{eq:diff_A}--\eqref{eq:diff_C} is guaranteed to converge in the mean for any connected network topology, as long as the agents are individually mean stable, i.e., whenever \cite{Cattivelli10}
 \begin{align}
 	0 < \mu < \min_{1\leq k\leq N} \left\{\frac{2}{\lambda_{\max}(R_{u,k})}\right\} \label{eq:diff_stability_range}
 \end{align} 
In contrast, consensus implementations can become unstable for some topologies even if all individual agents are mean stable \cite{tu2012diffusion,SayedProcIEEE}.

Relation \eqref{eq:diff_stability_range} also highlights a special feature of the diffusion strategies. We observe that the right-hand side of \eqref{eq:diff_stability_range} is independent of any information regarding the network topology over which the distributed algorithm is executed (it only requires that the network be connected). As we will see later in our presentation, this convenient attribute is not present in the distributed stochastic AH and AL algorithms studied. That is, the stability range of the AH and AL algorithms will explicitly depend on the network topology (see Section \ref{ssec:stepsize_range}). This is an inconvenient fact as it implies that the stability range for these algorithms may \emph{shrink} when more nodes are added or when nodes are added in a non-controlled way (see \eqref{eq:max_connectivity_large_eta_mu_range}).

\section{Adaptive Primal-Dual Strategies}

There is an alternative method to encourage agreement among agents by explicitly introducing equality constraints into the problem formulation \eqref{eq:formulation_non_distributed_opt} --- see \eqref{eq:Opt_problem_global_constraints:Objective}--\eqref{eq:Opt_problem_global_constraints:Constraint} below. We are going to examine this alternative optimization problem and derive distributed variants for it. Such motivation of primal-dual algorithms has been previously utilized in \cite{schizas2009distributed,mateos2009performance,mateos2012distributed,Paolo}, without the use of the Laplacian and incidence matrices of the network topology as discussed in our exposition. Interestingly, it will turn out that while the primal-dual techniques studied herein generally perform well in the context of {\em deterministic} optimization problems, their performance is nevertheless degraded in nontrivial ways when approximation steps become necessary. One notable conclusion that follows from the subsequent analysis is that, strictly speaking,  there is no need to incorporate explicit equality constraints into the problem formulation as in \eqref{eq:Opt_problem_global_constraints:Constraint}. This is because this step ends up limiting the learning ability of the agents in comparison to the primal (consensus and diffusion) strategies. The analysis will clarify these statements. 

To motivate the adaptive primal dual strategy, we start by replacing \eqref{eq:formulation_non_distributed_opt} by the following equivalent  constrained optimization problem where the variable $w$ is replaced by $w_k$:
\begin{subequations}
\begin{align}
	\min_{\{w_k\}}\quad& \frac{1}{2} \sum_{k=1}^N  \E (\d_k(i) - \u_{k,i} w_k)^2	\label{eq:Opt_problem_global_constraints:Objective}\\
	\mathrm{s.t.}\quad& w_1 = w_2 = \cdots = w_N  	\label{eq:Opt_problem_global_constraints:Constraint}
\end{align}
\end{subequations}
The following definitions are useful \cite{bapat2010graphs,sayed2012diffbookchapter}. 
\begin{definition}[Incidence matrix of an undirected graph]
	\label{def:incidence}
	Given a graph $G$, the incidence matrix $C=[c_{ek}]$ is an $E \times N$ matrix, where $E$ is the total number of edges in the graph and $N$ is the total number of nodes, with entries defined as follows:
	\begin{align*}
		c_{e k} = \begin{mycases}
					+1, &k \mathrm{\;is\;the\;lower\;indexed\;node\;connected\;to\;} e\\
					-1, &k \mathrm{\;is\;the\;higher\;indexed\;node\;connected\;to\;} e\\
					 0, &\mathrm{otherwise}
				  \end{mycases}
	\end{align*}	
	 Thus, $C \mathds{1}_N = \mathbbold{0}_E$. Self-loops are excluded.\qed
\end{definition}
\begin{definition}[Laplacian matrix of a graph]
	Given a graph $G$, the Laplacian matrix $L=[l_{k\ell}]$ is an $N \times N$ matrix whose entries are defined as follows:
	\begin{align*}
		l_{k\ell} = \begin{mycases}
						|\mathcal{N}_k| - 1, & k = \ell\\
						-1, & k\neq \ell, \ell \in \mathcal{N}_k\\
						0, & \mathrm{otherwise}
					\end{mycases}
	\end{align*}
	where $|{\cal N}_k|$ denotes the number of neighbors of agent $k$. It holds that $L = C^\T C$, where $C$ is the incidence matrix.  \qed
\end{definition}
Since there must exist at least $N-1$ edges to connect $N$ nodes when the graph is connected, the number of edges in the graph satisfies $E \geq N-1$. Note that each node $k$ has access to the $k$-th row and column of the Laplacian matrix since they are aware of their local network connections. In addition, each node $k$ has access to row $e$ of the incidence matrix for which $c_{e k} \neq 0$. 

Since  the network is connected, it is possible to rewrite \eqref{eq:Opt_problem_global_constraints:Objective}-\eqref{eq:Opt_problem_global_constraints:Constraint} as
\begin{subequations}
\begin{align}
	\min_{\{w_k\}}\quad& \frac{1}{2} \sum_{k=1}^N \E (\d_k(i) - \u_{k,i} w_k)^2		\label{eq:Opt_problem_local_constraints:Objective}\\
	\mathrm{s.t.}\quad& \mathcal{C} \sw = \mathbbold{0}_{EM} 	\label{eq:Opt_problem_local_constraints:Constraint}
\end{align}
\end{subequations}
where we introduced the extended quantities:
\begin{align}
	\mathcal{C} \triangleq C \otimes I_M, \quad \sw \triangleq \mathrm{col}\{w_1,\ldots,w_N\}
\end{align}
The augmented Lagrangian of the constrained problem \eqref{eq:Opt_problem_local_constraints:Objective}--\eqref{eq:Opt_problem_local_constraints:Constraint} is given by \cite{cvx_book,poliak1987introduction}:
\begin{align}
	f(\sw,\lambda) &= \frac{1}{2} \sum_{k=1}^N \E (\d_k(i) - \u_{k,i} w_k)^2 + \lambda^\T \mathcal{C} \sw + \frac{\eta}{2} \|\sw\|^2_\mathcal{L} \label{eq:Problem_Formulation:Lagrangian_distributed}
\end{align}
where $\lambda \in \mathbb{R}^{E M \times 1}$ is the Lagrange multiplier vector: it consists of $E$ subvectors, $\lambda=\mbox{\rm col}\{\lambda_e\}$, each of size $M\times 1$ for $e=1,2,\ldots,E$. One subvector $\lambda_e$ is associated with each edge $e$. Moreover, 
\begin{align}
	\mathcal{L} \triangleq L \otimes I_M = \mathcal{C}^\T \mathcal{C} \label{eq:script_L}
\end{align}
and $\eta$ is a non-negative regularization parameter. The dual function $g(\lambda)$ associated with \eqref{eq:Problem_Formulation:Lagrangian_distributed} is found by minimizing $f(\sw,\lambda)$ over the primal variable $\sw$:
\begin{align}
	g(\lambda) = \min_\sw f(\sw,\lambda) \label{eq:dual_function}
\end{align}
The dual function $g(\lambda)$ is known to be always concave regardless of the convexity of the primal problem \cite[p.~216]{cvx_book}. An optimal dual variable $\lambda^o$ is found by maximizing \eqref{eq:dual_function} over $\lambda$:
\begin{align}
	\lambda^o = \underset{\lambda}{\arg\max}\ g(\lambda) \label{eq:dual_optimization}
\end{align}
The unique solution $\sw^o$ to \eqref{eq:Opt_problem_local_constraints:Objective}--\eqref{eq:Opt_problem_local_constraints:Constraint} can be determined by focusing instead on determining the saddle points $\{\sw^o,\lambda^o\}$ of the augmented Lagrangian function \eqref{eq:Problem_Formulation:Lagrangian_distributed} \cite{bertsekas1999nonlinear,cvx_book}. There  are various methods to do so, such as relying on a gradient ascent procedure when $g(\lambda)$ is known. This approach is the one taken in the Alternating Direction Method of Multipliers (ADMM) \cite{boyd2011distributed} and particularly in the work \cite[p.~356]{Paolo}. However, in the setting under study, the statistical moments of the data are not known and, therefore, the expectation in $f(\sw,\lambda)$ in \eqref{eq:Problem_Formulation:Lagrangian_distributed} cannot be evaluated beforehand. As a result, the dual function $g(\lambda)$ defined  by \eqref{eq:dual_function} cannot be determined either and, consequently, we cannot rely directly on \eqref{eq:dual_optimization} to determine the optimal dual variable. We will need to follow an alternative route that relies on the use of stochastic approximations. 


We search for a saddle-point of \eqref{eq:Problem_Formulation:Lagrangian_distributed} by employing a {\em stochastic approximation} version of the first-order AL algorithm \cite[pp.~240--242]{poliak1987introduction} \cite[p.~456]{bertsekas1999nonlinear}. The implementation relies on a {\em stochastic} gradient descent step with respect to the primal variable, $\sw$, and a gradient ascent step with respect to the dual variable, $\lambda$, as follows:
\begin{subequations}
\begin{align}
	\w_i &= \w_{i-1} - \mu \widehat{\nabla_w} f(\w_{i-1}, \boldsymbol{\lambda}_{i-1}) \label{eq:AL_primal}\\
	\boldsymbol{\lambda}_i &= \boldsymbol{\lambda}_{i-1} + \mu \nabla_\lambda f(\w_{i-1}, \boldsymbol{\lambda}_{i-1}) \label{eq:AL_dual}
\end{align}
\end{subequations}
Observe that we are using an approximate gradient vector in \eqref{eq:AL_primal} and the exact gradient vector in \eqref{eq:AL_dual}; this is because differentiation relative to $\sw$ requires knowledge of the data statistics, which are not available. These gradient vectors are evaluated as follows: 
\begin{subequations}
\begin{align}
	\widehat{\nabla_{w}} f(\w,\boldsymbol{\lambda}) &= \h_i + \mathcal{C}^\T \boldsymbol{\lambda} + \eta \mathcal{L} \w \label{eq:gradient_lagrangian_w}\\
	\nabla_\lambda f(\w,\boldsymbol{\lambda}) &= \mathcal{C} \w \label{eq:gradient_lagrangian_lambda}
\end{align}
\end{subequations}
where the vector $\h_i$ amounts to an instantaneous approximation for the gradient vector of the first term on the right-hand side of \eqref{eq:Problem_Formulation:Lagrangian_distributed}; its $k-$th entry is given by $-\u_{k,i}^{\T}(\d_k(i)-\u_{k,i}\boldsymbol{w}_k)$. Substituting into \eqref{eq:AL_primal}--\eqref{eq:AL_dual}, we obtain the following algorithm, where the notation $\bm{\lambda}_{e,i}$ denotes the estimate for the subvector $\lambda_e$ associated with the edge of index $e$. 
\begin{algorithm}[H]
	\caption{Distributed Augmented Lagrangian (AL)}
	\label{alg:AL}
	{\small \begin{subequations}
\begin{align}
	\boldsymbol{\psi}_{k,i-1} &= \boldsymbol{w}_{k,i-1} \!-\! \mu \sum_{e= 1}^E c_{e k} \boldsymbol{\lambda}_{e,i-1} \!-\! \mu  \eta \sum_{\ell \in \mathcal{N}_k} l_{k \ell} \boldsymbol{w}_{\ell,i-1} \label{eq:AL_primal_alg}\\
	\boldsymbol{w}_{k,i} &= \boldsymbol{\psi}_{k,i-1} \!+\! \mu \u_{k,i}^\T (\d_{k}(i) \!-\! \u_{k,i} \boldsymbol{w}_{k,i-1})  \label{eq:AL_primal_alg2}\\
	\boldsymbol{\lambda}_{e,i} &= \boldsymbol{\lambda}_{e,i-1} + \mu (\boldsymbol{w}_{k,i-1} - \boldsymbol{w}_{\ell,i-1}) \quad [\ell \!>\! k, \ell \!\in\! \mathcal{N}_k]\label{eq:AL_dual_alg}
\end{align}
\end{subequations}}
\end{algorithm}
\noindent When $\eta = 0$ in \eqref{eq:AL_primal_alg}, we obtain the \emph{distributed AH} method, also considered in \cite{Ribeiro,Sergio} for the solution of saddle point problems for other cost functions. Reference \cite{Sergio} considers problems that arise in the context of reinforcement learning in response to target policies, while reference \cite{Ribeiro} considers regret analysis problems and employs {\em decaying} step-sizes rather than continuous adaptation. As noted earlier, constant step-sizes enrich the dynamics in non-trivial ways due to the persistent presence of gradient noise. Note further that by setting $\eta=0$, step \eqref{eq:AL_primal_alg} will end up relying solely on the dual variables and will not benefit from the neighbors' iterates. The presence of these iterates in \eqref{eq:AL_primal_alg} strongly couples the dynamics of the various steps in the distributed Lagrangian implementation. 

In the above statement, either node connected to edge $e$ can be responsible for updating $\boldsymbol{\lambda}_{e,i-1}$. At each step, the nodes communicate their previous variables $(\boldsymbol{w}_{k,i-1}, \{\boldsymbol{\lambda}_{e,i-1}\})$ to their neighbors and then each node executes the steps outlined in Alg.~\ref{alg:AL}. The primal-dual algorithms require more computation and communication per iterations than the primal methods (generally, by at least a factor of $2$; this is because the agents in the AL and AH implementations also need to propagate the additional dual variables, $\bm{\lambda}_{e,i-1}$).

\begin{table*}[!ht]
	\renewcommand{\arraystretch}{2.2}
	\centering
	\begin{threeparttable}
	\caption{Summary of Results: Partial Observation implies that when some of the covariance matrices are singular but their sum is positive-definite, the algorithm can estimate $w^o$. A stability range that depends on the Laplacian matrix $L$ implies that the stability of the algorithm depends on the network topology.}
	\label{Tab:PreviewOfResults}
	\begin{tabular}{c||c|c|c}
	\hline \hline
	\rowcolor[gray]{0.9}{ Algorithm }& { Handles Partial Observation} & { Stability Range} & { Steady-state MSD} \\ 
	\hline 
	Diffusion Strategy \eqref{eq:diff_A}--\eqref{eq:diff_C} & \cmark & $0 < \mu < \bar{\mu}$ & $\frac{1}{N} \mathrm{MSD}^{NC}$ \\ 
	\hline 
	Consensus Strategy \eqref{eq:consensus_1}--\eqref{eq:consensus_2} & \cmark & $0 < \mu < \mu^c(L) < \bar{\mu}$ & $\frac{1}{N} \mathrm{MSD}^{NC}$ \\ 
	\hline 
	No Cooperation & \xmark & $0 < \mu < \bar{\mu}$ & $\mathrm{MSD}^{NC} \triangleq \frac{\mu M}{2 N} \displaystyle \sum_{k=1}^N  \sigma_{v,k}^2$ \\ 
	\hline 
	Arrow-Hurwicz (AH) Method \eqref{eq:AL_primal_alg}--\eqref{eq:AL_dual_alg} with $\eta = 0$ & \xmark \tnote{a} & $0 < \mu < \mu^{AH}(L) < \bar{\mu}$ & $\mathrm{MSD}^{NC}$ \\ 
	\hline 
	Augmented Lagrangian (AL) Method \eqref{eq:AL_primal_alg}--\eqref{eq:AL_dual_alg} for large $\eta$ & \cmark \tnote{b} & $0 < \mu < \frac{\mu^{AL}(L)}{\eta}$ & $\frac{1}{N} \mathrm{MSD}^{NC} + O\left(\frac{1}{\eta}\right)$ \\ 
	\hline \hline
	\end{tabular} 
	\begin{tablenotes}
		\item[a] See Corollary \ref{cor:generalTopology_mean_stability}.
		\item[b] See Theorems \ref{thm:generalTopology_mean_stability_partial_observability} and \ref{thm:generalTopology_mean_square_stability_partial_observability}.
	\end{tablenotes}
	\end{threeparttable}
\end{table*}

\section{Preview of Results}
We summarize in this section the main highlights to be derived in the sequel. 

To begin with, it is known that even when some of the regression covariance matrices, $R_{u,k}$, are singular, the diffusion strategy \eqref{eq:diff_A}--\eqref{eq:diff_C} and the consensus strategy \eqref{eq:consensus_1}--\eqref{eq:consensus_2} are still able to estimate $w^o$ through the collaborative process among the agents \cite{SayedProcIEEE}. We will verify in the sequel that the AL algorithm \eqref{eq:AL_primal_alg}--\eqref{eq:AL_dual_alg} can also converge in this case (see Theorems \ref{thm:generalTopology_mean_stability_partial_observability} and \ref{thm:generalTopology_mean_square_stability_partial_observability}), while the AH algorithm need not converge.  Further differences among the algorithms arise in relation to how stable they are and how close their iterates get to the desired $w^o$ when they do converge. We will see, for instance, that the AH and AL implementations are stable over a smaller range of step-sizes than consensus and diffusion (and, therefore, are ``less'' stable). We will also see that even when they are stable, the MSD performance of the primal-dual networks (networks that utilize the AH and AL algorithms) is degraded relative to what is delivered by consensus and diffusion strategies. 

We will also examine the useful case when all regression covariance matrices are uniform and positive-definite:
\begin{align}
	R_{u,k} = R_u > 0,\quad\quad k=1,\ldots,N \label{eq:common_Ru}
\end{align} 
Under \eqref{eq:common_Ru}, we will show that all algorithms under study can recover $w^o$, but that the step-size range for stability for the AH and AL algorithms continues to be smaller even in comparison to the non-cooperative solution where nodes act independently of each other. In other words, cooperation does not necessarily enhance the stability range of primal-dual networks (defined as the step-size range that allows the algorithm to at least converge in the mean). In contrast, diffusion adaptation always enhances the stability range and leads to more relaxed stability conditions than non-cooperative processing \cite{SayedProcIEEE,NOW_ML} (such as not requiring that all $\{R_{u,k}\}$ are positive-definite). In addition, we will show that the step-size range for primal-dual networks depends on their topology and is inversely proportional to $\eta$ in the case of the AL algorithm (see Theorem \ref{thm:eigenvalues_minusR2}). What these results mean is that connected nodes can fail to converge via the AH and AL algorithms, even though they will converge under the diffusion and consensus strategies \eqref{eq:diff_A}--\eqref{eq:diff_C}, \eqref{eq:consensus_1}--\eqref{eq:consensus_2}, or even under the non-cooperative solution (see Example \ref{ex:diverging_example}). A similar observation was proven earlier in \cite{tu2012diffusion} for consensus strategies: the network can become unstable even  if all individual agents are stable. The reason for this behavior is the asymmetry noted earlier in the consensus update \eqref{eq:consensus_2}; this asymmetry can lead to an unstable growth in the state of consensus networks. We observe from \eqref{eq:AL_primal_alg2} that a similar asymmetry exists in the update equations for AL and AH implementations. Diffusion strategies, on the other hand, do not have this asymmetry and will remain stable regardless of the topology \cite{tu2012diffusion,SayedProcIEEE}.

With regards to the MSD performance \eqref{eq:network_MSD}, we will show that the AH algorithm achieves the same performance level as the non-cooperative algorithm (see Corollary \ref{cor:zero_eta_approximation}), while the AL algorithm with $\eta > 0$ attains a performance level that is worse than that by the diffusion and consensus strategies by a positive additive term that decays as $\eta$ increases. The above observations and results are summarized in Table \ref{Tab:PreviewOfResults}. The stability ranges and steady-state MSD values are under \eqref{eq:common_Ru} and assume that the Metropolis combination weights \cite{sayed2012diffbookchapter} are used for diffusion and consensus.

\section{Coupled Error Recursion}
We move on to establish the above results by carrying out a detailed mean-square-error analysis of the algorithms. 
\subsection{Error Quantities}
We know that the optimizer of \eqref{eq:Opt_problem_local_constraints:Objective}--\eqref{eq:Opt_problem_local_constraints:Constraint} is $w_k = w^o$ for all $k=1,\ldots,N$, where $w^o$ was defined in \eqref{eq:ProblemFormulation:LinearModel} since \eqref{eq:Opt_problem_local_constraints:Objective}--\eqref{eq:Opt_problem_local_constraints:Constraint} is equivalent to \eqref{eq:formulation_non_distributed_opt}. We introduce the error vector at each agent $k$, $\widetilde{\boldsymbol{w}}_{k,i} = w^o-\boldsymbol{w}_{k,i}$, and collect all errors from across the network into the block column vector:
\begin{align}
\widetilde{\w}_i=\mbox{\rm col}\{\widetilde{\boldsymbol{w}}_{1,i},\,\widetilde{\boldsymbol{w}}_{2,i},\ldots,\widetilde{\boldsymbol{w}}_{N,i}\}
\end{align}
We next subtract $w^o$ from both  sides of \eqref{eq:AL_primal_alg} and use \eqref{eq:ProblemFormulation:LinearModel} to find that the network error vector evolves according to the following dynamics:
\begin{equation}
	\widetilde{\w}_{i} \!=\! \widetilde{\w}_{i-1} \!+\! \mu \left(-\boldsymbol{\mathcal{H}}_i \widetilde{\w}_{i-1} \!+\! \mathcal{C}^\T \boldsymbol{\lambda}_{i-1} \!+\! \eta \mathcal{L} \w_{i-1}\right) \!-\! \mu \z_i \label{eq:ErrorRecursion:primal_error_recursion1}
\end{equation}
where
\begin{align}
	\z_i &\triangleq \mathrm{col}\{\u_{1,i} \v_1(i),\ldots \u_{N,i} \v_N(i)\} \label{eq:z_i}\\
	\boldsymbol{\mathcal{H}}_i &\triangleq \textrm{blockdiag}\{\u_{1,i}^\T \u_{1,i},\ldots,\u_{N,i}^\T \u_{N,i}\} \label{eq:H_i}
\end{align}

\noindent We know that there exists a vector $\lambda^o$ (possibly not unique) that satisfies \cite[Ch.~5]{cvx_book}
\begin{align}
\nabla_\sw f(\mathds{1}_N \otimes w^o,\lambda^o) &= \mathbbold{0}_{NM}
\end{align}
But since $\mathcal{L} \mathds{1}_{N M} = \mathbbold{0}_{N M}$, this condition implies that 
\begin{align}
	\underset{1\leq k\leq N}{\mathrm{col}} \left\{R_{u,k}w^o - r_{du,k}\right\} + \mathcal{C}^\T \lambda^o = \mathbbold{0}_{NM} \label{eq:ErrorRecursion:KKT}
\end{align}
Moreover, since $w^o$ optimizes \eqref{eq:formulation_non_distributed_opt}, we have that $w^o$ satisfies:
\begin{align}
	\underset{1\leq k\leq N}{\mathrm{col}} \left\{R_{u,k}w^o - r_{du,k}\right\} = \mathbbold{0}_{NM}
\end{align}
and we conclude from \eqref{eq:ErrorRecursion:KKT} that we must have:
\begin{align}
	\mathcal{C}^\T \lambda^o = \mathbbold{0}_{NM} \label{eq:condition_optimal_lambda}
\end{align}

\subsection{Useful Eigen-Spaces} 
The rank-deficiency of $\mathcal{C}$ creates difficulties for the study of the stability and performance
of the adaptive primal-dual networks. We will resort to a useful transformation that allows us to identify and ignore redundant dual variables. We start by introducing the singular-value-decomposition of $C$:
\begin{align}
	C = U S V^\T \label{eq:SVD_C}
\end{align}
where $U \in \mathbb{R}^{E\times E}$ and $V \in \mathbb{R}^{N\times N}$ are orthogonal matrices and $S \in \mathbb{R}^{E\times N}$ is partitioned according to
\begin{align}
	S = \left[\begin{array}{c|c}
	S_2 & \mathbbold{0}_{N-1}\\\hline
	0_{(E-N+1) \times (N-1)} & \mathbbold{0}_{E-N+1}
	\end{array}\right]
\end{align}
where the square diagonal matrix $S_2 \in \mathbb{R}^{(N-1)\times (N-1)}$ contains the nonzero singular values of $C$ along its main diagonal and is therefore non-singular.

Now, since $L=C^\T C$, it follows that $S_2^\T S_2$ is a diagonal matrix containing the nonzero eigenvalues of $L$, i.e., $L = V D V^\T$, where
\begin{align}
	D \triangleq \left[\begin{array}{cc} 
								D_1 & \mathbbold{0}_{N-1}\\
								\mathbbold{0}_{N-1}^\T & 0
							\end{array}
						\right] = \left[\begin{array}{cc} 
								S_2^\T S_2 & \mathbbold{0}_{N-1}\\
								\mathbbold{0}_{N-1}^\T & 0
							\end{array}
						\right] \label{eq:Lambda}
\end{align}
in terms of $D_1=S_2^\T S_2$. Furthermore, since $L \mathds{1}_N = \mathbbold{0}_N$, we can partition $V$ into:
\begin{align}
V = \left[\begin{array}{cc}  V_2 & \frac{1}{\sqrt{N}} \mathds{1}_N\end{array}\right] \label{eq:subdivided_V}
\end{align}
In addition, we can write 
\begin{align}
	\mathcal{C} = \mathcal{U} \mathcal{S} \mathcal{V}^\T \label{eq:SVD_script_C}
\end{align}
where ${\cal U} = U \otimes I_M$, ${\cal V} = V \otimes I_M$, and ${\cal S} = S \otimes I_M$ can be partitioned as:
\begin{align}
{\cal S} &= \left[\begin{array}{c|c}
	{\cal S}_2 & 0_{(N-1)M \times M}\\\hline
	0_{(E-N+1)M\times N M} & 0_{(E-N+1)M \times M}
	\end{array}\right] \label{eq:subdivided_S}
\end{align}
with a nonsingular diagonal matrix ${\cal S}_2 \in \mathbb{R}^{(N-1)M \times (N-1)M}$. Likewise, 
\begin{align}
	\mathcal{V} &= \left[\begin{array}{cc} \mathcal{V}_2 & \mathcal{V}_0 \end{array}\right] \label{eq:subdivided_script_V}
\end{align}
where $\mathcal{V}_2 \triangleq V_2 \otimes I_M$ and $\mathcal{V}_0 \triangleq \mathds{1}_N/\sqrt{N} \otimes I_M$. 

\subsection{Dimensionality Reduction}
We recall \eqref{eq:AL_dual} and re-write it as 
\begin{align}
	\boldsymbol{\lambda}_i &= \boldsymbol{\lambda}_{i-1} - \mu \mathcal{C} (\mathds{1}_N \otimes w^o - \w_{i-1}) \label{eq:alg_recursion1}
\end{align}
Next, we introduce the transformed vectors:
\begin{align}
	{\lambda'}^o = \mathcal{U}^\T \lambda^o,\quad\quad {\sw'}^o = \mathcal{V}^\T (\mathds{1}_N \otimes w^o)
\end{align}
where we are using the prime notation to refer to transformed quantities. Then, relation \eqref{eq:condition_optimal_lambda} implies that 
\begin{align}
\mathcal{S}^\T {\lambda'}^o = 0	 \label{eq:optimality_condition_joint}
\end{align}
We partition ${\lambda'}^o$ as:
\begin{align}
{\lambda'}^o = \left[\begin{array}{c}
	{\lambda'_1}^{\!o} \\ \hline
	{\lambda'_2}^{\!o}
	\end{array} \right]
\end{align}
where ${\lambda'}_1^o \in \mathbb{R}^{(N-1)M\times 1}$ is the dual variable associated with $N-1$ constraints in the network and ${\lambda'}_2^o \in \mathbb{R}^{(E-N+1)M\times 1}$ are the dual variables associated with the remaining constraints. We then conclude from \eqref{eq:optimality_condition_joint} that 
\begin{align}
	\mathcal{S}_2^\T {\lambda'_1}^{\!o} = \mathbbold{0}_{(N-1)M} \label{eq:optimality_condition_split}
\end{align}
Observe that while the optimal Lagrange multiplier $\lambda^o$ may not be unique, the transformed vector ${\lambda'}_1^o$ is unique since $\mathcal{S}_2$ is invertible. We multiply both sides of \eqref{eq:alg_recursion1} from the left by $\mathcal{U}^\T$ to obtain
\begin{align}
	\boldsymbol{\lambda}_i' &= \boldsymbol{\lambda}_{i-1}' - \mu \mathcal{S} ({\sw'}^o - \w_{i-1}') \label{eq:transformed_alg_recursion}
\end{align}
where 
\begin{align}
	\boldsymbol{\lambda}_i' \triangleq \mathcal{U}^\T\boldsymbol{\lambda}_i ,\quad\quad \w_i' \triangleq \mathcal{V}^\T \w_i \label{eq:transformed_variables}
\end{align}
We similarly partition ${\sw'}^o$, $\w_i'$, and $\boldsymbol{\lambda}_i'$:
\begin{align}
	\boldsymbol{\lambda}_i' = \left[\begin{array}{c}
	\boldsymbol{\lambda}_{1,i}' \\ \hline
	\boldsymbol{\lambda}_{2,i}'
	\end{array} \right], \ \ 
	{\sw'}^o = \left[\begin{array}{c}
	{\sw'}^o_{1} \\ \hline
	{\sw'}^o_{2}
	\end{array} \right]	, \ \  \w_i' = \left[\begin{array}{c}
	\w_{1,i}' \\ \hline
	\w_{2,i}'
	\end{array} \right] \label{eq:partitioning_w}
\end{align}
where ${\sw'}^o_{1},\w_{1,i}' \in \mathbb{R}^{(N-1)M\times 1}$, ${\sw'}^o_{2},\w_{2,i}' \in \mathbb{R}^{M\times 1}$, $\boldsymbol{\lambda}_{1,i}' \in \mathbb{R}^{(N-1)M\times 1}$, and $\boldsymbol{\lambda}_{2,i}' \in \mathbb{R}^{(E-N+1)M\times 1}$. Rewriting \eqref{eq:transformed_alg_recursion} in terms of \eqref{eq:partitioning_w}, we obtain
\begin{align}
	\left[\begin{array}{c}
	\boldsymbol{\lambda}_{1,i}' \\ \hline
	\boldsymbol{\lambda}_{2,i}'
	\end{array} \right] &= \left[\begin{array}{c}
	\boldsymbol{\lambda}_{1,i-1}' \\ \hline
	\boldsymbol{\lambda}_{2,i-1}'
	\end{array} \right] - \mu \left[\begin{array}{c}{\cal S}_2 ({\sw_1'}^o - \w_{1,i-1}')\\\hline 0_{(E-N+1)M\times N M}\end{array}\right]  \label{eq:transformed_alg_recursion_split}
\end{align}
We observe from \eqref{eq:transformed_alg_recursion_split} that $\boldsymbol{\lambda}_{2,i-1}'$ does not change as the algorithm progresses. It is therefore  sufficient to study the evolution of $\boldsymbol{\lambda}_{1,i}'$ alone:
\begin{align}
	\boldsymbol{\lambda}_{1,i}' &= \boldsymbol{\lambda}_{1,i-1}' - \mu \mathcal{S}_2 ({\sw_1'}^o - \w_{1,i-1}') \label{eq:transformed_alg_recursion_reduced}
\end{align} 
We may now subtract \eqref{eq:transformed_alg_recursion_reduced} from ${\lambda_1'}^{o}$ to obtain the error-recursion:
\begin{align}
	\widetilde{\boldsymbol{\lambda}}_{1,i}' &= \widetilde{\boldsymbol{\lambda}}_{1,i-1}' + \mu \mathcal{S}_2 \widetilde{\w}_{1,i-1}' \label{eq:transformed_alg_recursion_reduced2}
\end{align} 
where
\begin{align}
	\widetilde{\boldsymbol{\lambda}}_{1,i}' \triangleq {\lambda_1'}^o - \boldsymbol{\lambda}_{1,i}', \quad\quad \widetilde{\w}_{1,i}' \triangleq {\sw'}^o_1 - \w_{1,i}' \label{eq:transformed_error_quantities}
\end{align}
On the other hand, since $\mathcal{L} (\mathds{1}_N \otimes w^o) = \mathbbold{0}_{NM}$, we have that \eqref{eq:ErrorRecursion:primal_error_recursion1} can be re-written as
\begin{align}
	\widetilde{\w}_{i} = \widetilde{\w}_{i-1} \!+\! \mu \left(-\boldsymbol{\mathcal{H}}_i \mathcal{V} \widetilde{\w}'_{i-1} \!+\! \mathcal{C}^\T \boldsymbol{\lambda}_{i-1} \!-\! \eta \mathcal{L} \mathcal{V} \widetilde{\w}'_{i-1}\right) - \mu \z_i
\end{align}
Multiplying both sides from the left side by $\mathcal{V}^\T$, we obtain
\begin{align}
	\widetilde{\w}_{i}' &= \widetilde{\w}_{i-1}' \!-\! \mu \left((\mathcal{V}^\T \boldsymbol{\mathcal{H}}_i \mathcal{V} \!+\! \eta \sLambda)\widetilde{\w}'_{i-1} \!-\! \mathcal{V}^\T \mathcal{C}^\T 
\boldsymbol{\lambda}_{i-1} \right) - \mu \z_i'  \label{eq:ErrorRecursion:primal_error_recursion}
\end{align}
where 
\begin{align}
	\sLambda \triangleq D \otimes I_M = \left[\begin{array}{c|c}
	\mathcal{S}_2^\T \mathcal{S}_2 & 0_{(N-1)M\times M} \\ \hline
	0_{M\times (N-1)M} & 0_{M\times M}
	\end{array} \right] \label{eq:sLambda}
\end{align}
and
\begin{align}
	\z_i' \triangleq \mathcal{V}^\T \z_i = \left[\begin{array}{c}
	\mathcal{V}_2^\T \z_i \\ \hline
	\mathcal{V}_0^\T \z_i
	\end{array}\right] \label{eq:transformed_z}
\end{align}
Using \eqref{eq:Lambda} and \eqref{eq:subdivided_script_V} we have:
\begin{align}
	\mathcal{V}^\T \boldsymbol{\mathcal{H}}_i \mathcal{V}  + \eta \sLambda &= \left[\begin{array}{cc}
	\mathcal{V}_2^\T \boldsymbol{\mathcal{H}}_i \mathcal{V}_2 + \eta \mathcal{S}_2^\T \mathcal{S}_2& \mathcal{V}_2^\T \boldsymbol{\mathcal{H}}_i \mathcal{V}_0 \\ 
	\mathcal{V}_0^\T \boldsymbol{\mathcal{H}}_i \mathcal{V}_2 & \mathcal{V}_0^\T \boldsymbol{\mathcal{H}}_i \mathcal{V}_0
	\end{array} \right] \label{eq:V'HV+e*L}
\end{align}
and using \eqref{eq:subdivided_S} we also have
\begin{align}
	\mathcal{V}^\T \mathcal{C}^\T \boldsymbol{\lambda}_{i-1} = \mathcal{S}^\T \boldsymbol{\lambda}_{i-1}' 
	\stackrel{(a)}{=} \left[\begin{array}{c}
	-\mathcal{S}_2^\T \\ 
	0_{M \times (N-1)M}
	\end{array} \right] \widetilde{\boldsymbol{\lambda}}_{1,i-1}' \label{eq:dual_term}
\end{align}
where step $(a)$ is due to \eqref{eq:optimality_condition_split}.

Collecting \eqref{eq:transformed_alg_recursion_reduced2}, \eqref{eq:ErrorRecursion:primal_error_recursion}, and \eqref{eq:V'HV+e*L}--\eqref{eq:dual_term} in matrix form, we arrive at the following theorem for the evolution of the error dynamics of the primal-dual strategy over time.

\begin{theorem}[Error dynamics of primal-dual strategies]
Let the network be connected. Then, the error dynamics of the primal-dual algorithms evolves over time as follows:
\begin{align}
	\left[\begin{array}{c}
	\widetilde{\w}_{1,i}' \\ 
	\widetilde{\w}_{2,i}' \\ 
	\widetilde{\boldsymbol{\lambda}}_{1,i}'
	\end{array}\right]  = \boldsymbol{\mathcal{B}}_i'  \left[\begin{array}{c}
	\widetilde{\w}_{1,i-1}' \\ 
	\widetilde{\w}_{2,i-1}' \\ 
	\widetilde{\boldsymbol{\lambda}}_{1,i-1}'
	\end{array}\right] - \mu \left[\begin{array}{c}
	\mathcal{V}_2^\T\z_i \\ 
	\mathcal{V}_0^\T\z_i \\ 
	\mathbbold{0}_{(N-1)M}
	\end{array} \right] \label{eq:matrix_form_random_error_recursion}
\end{align}
where 
\begin{align}
	\boldsymbol{\mathcal{B}}_i' &\triangleq I_{(2N-1)M} - \mu \boldsymbol{\mathcal{R}}_i'\\
	\boldsymbol{\mathcal{R}}_i' &\triangleq \left[\!\!\begin{array}{cc|c}
	\mathcal{V}_2^\T \boldsymbol{\mathcal{H}}_i \mathcal{V}_2 + \eta \mathcal{S}_2^\T \mathcal{S}_2& \mathcal{V}_2^\T \boldsymbol{\mathcal{H}}_i \mathcal{V}_0 & \mathcal{S}_2^\T\\
	\mathcal{V}_0^\T \boldsymbol{\mathcal{H}}_i \mathcal{V}_2 & \mathcal{V}_0^\T \boldsymbol{\mathcal{H}}_i \mathcal{V}_0 &  0_{M \times (N-1)M} \\ \hline
	-\mathcal{S}_2 & 0_{(N-1)M\times M} & 0_{(N-1)M}
	\end{array}\!\!\right]
\end{align}
and $\widetilde{\w}_{1,i}'$ and $\widetilde{\boldsymbol{\lambda}}_{1,i}'$ are defined in \eqref{eq:transformed_error_quantities}, $\widetilde{\w}_{2,i}' \triangleq {\sw'_2}^o - \w_{2,i}'$, and $\boldsymbol{\mathcal{H}}_i$ is defined in \eqref{eq:H_i}.
\qed
\end{theorem}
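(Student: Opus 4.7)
The plan is to assemble the recursions that have already been derived --- namely \eqref{eq:transformed_alg_recursion_reduced2}, \eqref{eq:ErrorRecursion:primal_error_recursion}, \eqref{eq:V'HV+e*L}, and \eqref{eq:dual_term} --- into a single block-matrix recursion in which $(\widetilde{\w}_{1,i}',\widetilde{\w}_{2,i}',\widetilde{\boldsymbol{\lambda}}_{1,i}')$ is treated as the joint state. Nothing new needs to be discovered; the content of the theorem is that these pieces fit together consistently.

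First, I would rewrite the transformed primal recursion \eqref{eq:ErrorRecursion:primal_error_recursion} by substituting into it the $2\times 2$ block decomposition \eqref{eq:V'HV+e*L} of $\mathcal{V}^\T\boldsymbol{\mathcal{H}}_i\mathcal{V} + \eta\sLambda$, and by splitting $\widetilde{\w}_i' = \mathrm{col}\{\widetilde{\w}_{1,i}',\widetilde{\w}_{2,i}'\}$ as in \eqref{eq:partitioning_w}. This yields two block-row equations for $\widetilde{\w}_{1,i}'$ and $\widetilde{\w}_{2,i}'$ whose right-hand sides involve the two primal sub-errors together with the aggregate dual contribution $\mathcal{V}^\T\mathcal{C}^\T\boldsymbol{\lambda}_{i-1}$ and a driving noise $\mu\z_i' = \mu\,\mathrm{col}\{\mathcal{V}_2^\T\z_i,\mathcal{V}_0^\T\z_i\}$.

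Next, I would eliminate the dual variable $\boldsymbol{\lambda}_{i-1}$ in favor of the dual error $\widetilde{\boldsymbol{\lambda}}_{1,i-1}'$. This is the role of \eqref{eq:dual_term}, which --- by virtue of the optimality relation \eqref{eq:optimality_condition_split} --- expresses $\mathcal{V}^\T\mathcal{C}^\T\boldsymbol{\lambda}_{i-1}$ as a stacked vector whose top block equals $-\mathcal{S}_2^\T\widetilde{\boldsymbol{\lambda}}_{1,i-1}'$ and whose bottom block vanishes. After this substitution the two primal equations become a linear function of $(\widetilde{\w}_{1,i-1}',\widetilde{\w}_{2,i-1}',\widetilde{\boldsymbol{\lambda}}_{1,i-1}')$ plus a noise term, giving the first two block rows of $\boldsymbol{\mathcal{R}}_i'$.

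Finally, I would append the dual error recursion \eqref{eq:transformed_alg_recursion_reduced2} as the third block row; it couples $\widetilde{\boldsymbol{\lambda}}_{1,i}'$ back to $\widetilde{\w}_{1,i-1}'$ through the off-diagonal block $-\mathcal{S}_2$ and has no coupling to $\widetilde{\w}_{2,i-1}'$ or to the noise. Stacking the three block rows produces a recursion of the form $I - \mu\boldsymbol{\mathcal{R}}_i'$ acting on $\mathrm{col}\{\widetilde{\w}_{1,i-1}',\widetilde{\w}_{2,i-1}',\widetilde{\boldsymbol{\lambda}}_{1,i-1}'\}$, from which one reads off exactly the claimed form of $\boldsymbol{\mathcal{R}}_i'$ and $\boldsymbol{\mathcal{B}}_i'$. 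There is no real analytical obstacle; the work is algebraic bookkeeping. The only items requiring care are sign and dimension consistency --- in particular, honoring the convention $\widetilde{\boldsymbol{\lambda}}_{1,i}' \triangleq {\lambda_1'}^o - \boldsymbol{\lambda}_{1,i}'$ from \eqref{eq:transformed_error_quantities}, which flips the sign when passing from \eqref{eq:transformed_alg_recursion_reduced} to \eqref{eq:transformed_alg_recursion_reduced2}, and verifying that the vanishing bottom block in \eqref{eq:dual_term} is what produces the zero entry $0_{M\times(N-1)M}$ in the middle row of $\boldsymbol{\mathcal{R}}_i'$, so that $\widetilde{\w}_{2,i}'$ is not directly driven by $\widetilde{\boldsymbol{\lambda}}_{1,i-1}'$ at the matrix-block level.
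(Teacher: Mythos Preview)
Your proposal is correct and follows exactly the paper's own approach: the theorem is obtained by collecting \eqref{eq:transformed_alg_recursion_reduced2}, \eqref{eq:ErrorRecursion:primal_error_recursion}, and \eqref{eq:V'HV+e*L}--\eqref{eq:dual_term} into a single block-matrix recursion, which is precisely the algebraic bookkeeping you outline. Your attention to the sign convention in \eqref{eq:transformed_error_quantities} and to the vanishing lower block of \eqref{eq:dual_term} matches the only subtleties present.
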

It is clear from \eqref{eq:transformed_variables} that if $\E \widetilde{\w}_i' \triangleq \E (\sw'^o - \w_i')$ converges to zero, then $\E \widetilde{\w}_i\triangleq \E(\sw^o - \w_i)$ also converges to zero since $\widetilde{\w}_i' = \mathcal{V}^\T \widetilde{\w}_i$. Furthermore, it holds that, for any $NM\times NM$ positive-semidefinite real matrix $\Sigma$, 
\begin{align}
	\E\|\widetilde{\w}_{i}\|^2_\Sigma = \E \|\widetilde{\w}_{i}'\|^2_{\Sigma'} \label{eq:MSD_equivalence}
\end{align}
where $\Sigma' \triangleq \mathcal{V}^\T \Sigma \mathcal{V}$. Therefore, for mean-square-error analysis, it is sufficient to examine the behavior of $\E \|\widetilde{\w}_{i}'\|^2_{\Sigma'}$, knowing that we can relate it back to $\E\|\widetilde{\w}_{i}\|^2_\Sigma$ via \eqref{eq:MSD_equivalence}.

\subsection{Mean Error Recursion}
To obtain the mean error recursion, we compute expectations of both sides of \eqref{eq:matrix_form_random_error_recursion} and use that $\E \z_i=\mathbbold{0}_{NM}$  to get 
\begin{align}
	\left[\begin{array}{c}
	\E \widetilde{\w}_{1,i}' \\ 
	\E \widetilde{\w}_{2,i}' \\ 
	\E \widetilde{\boldsymbol{\lambda}}_{1,i}'
	\end{array}\right]  = \mathcal{B}' \left[\begin{array}{c}
	\E \widetilde{\w}_{1,i-1}' \\ 
	\E \widetilde{\w}_{2,i-1}' \\ 
	\E \widetilde{\boldsymbol{\lambda}}_{1,i-1}'
	\end{array}\right] \label{eq:first_mean_recursion}
\end{align}
where
\begin{align}
	\mathcal{B}' \!&\triangleq \!\E \boldsymbol{\mathcal{B}}_i' = I_{(2N-1)M} - \mu \mathcal{R}' \label{eq:B_prime}\\
	\mathcal{R}' \!&\triangleq \!\left[\!\!\begin{array}{cc|c} \mathcal{V}_2^\T \mathcal{H} \mathcal{V}_2 + \eta \mathcal{S}_2^\T \mathcal{S}_2 & \mathcal{V}_2^\T \mathcal{H} \mathcal{V}_0 & \mathcal{S}_2^\T\\
													  \mathcal{V}_0^\T \mathcal{H} \mathcal{V}_2 & \displaystyle \frac{1}{N} \sum_{k=1}^N R_{u,k} & 0_{M\!\times\!(N-1)M}\\ \hline
													   -\mathcal{S}_2 & 0_{(N-1)M\!\times\!M} & 0_{(N-1)M}  \end{array}\!\!\right]  \label{eq:R_prime}\\
	\mathcal{H} \!&\triangleq \!\mathrm{blockdiag}\{R_{u,1},\ldots,R_{u,N}\}
\end{align}
We would like to determine conditions on the step-size and regularization parameters $\{\mu,\eta\}$ to ensure asymptotic convergence of the mean quantity $\E \widetilde{\w}'_i$ to zero. We will examine this question later in Sec.~\ref{sec:Stability} when we study the stability of ${\cal B}'$. In the next section, we derive the mean-square-error recursion.\\

\noindent {\bf Remark} (Interpretation of transformed variables). We already observed that the transformed dual variables $\lambda_1'$ correspond to ``useful'' constraints while the dual variables $\lambda_2'$ correspond to ``redundant'' constraints. We can also obtain an interesting interpretation for the transformed variable $\sw'$ in the special case where $R_{u,1} = \ldots = R_{u,N} = R_u$ (i.e., $\mathcal{H} = I_N \otimes R_u$). First, partition the variable $\sw'$ according to
\begin{align}
	\w'_i = \left[\begin{array}{c} \w_{1,i}'\\ \boldsymbol{w}_{2,i}' \end{array}\right] \label{eq:partitioned_w}
\end{align}
where $\w_{1,i}' \in \mathbb{R}^{(N-1)M\times 1}$ and $\boldsymbol{w}_{2,i}' \in \mathbb{R}^{M\times 1}$. Substituting  \eqref{eq:subdivided_V}, and \eqref{eq:partitioned_w} into \eqref{eq:first_mean_recursion}, we obtain the following recursion:
\begin{align}
	\left[\begin{array}{c}
	\E \widetilde{\w}_{1,i}' \\ 
	\E \widetilde{\boldsymbol{w}}_{2,i}' \\ 
	\E \widetilde{\boldsymbol{\lambda}}_{1,i}'
	\end{array}\right]  &=  \left[\begin{array}{c}
	\E \widetilde{\w}_{1,i-1}' \\ 
	\E\widetilde{\boldsymbol{w}}_{2,i-1}' \\ 
	\E\widetilde{\boldsymbol{\lambda}}_{1,i-1}'
	\end{array}\right] - \nonumber\\
	&\!\!\!\!\!\!\!\!\!\!\!\!\!\!\!\!\!\!\!\!\!\!\!\!\!\!\mu \left[\begin{array}{ccc} \mathcal{V}_2^\T \mathcal{H} \mathcal{V}_2 + \eta \mathcal{S}_2^\T \mathcal{S}_2 & \frac{1}{\sqrt{N}} \mathcal{V}_2^\T \mathcal{H} (\mathds{1} \otimes I_M) & \mathcal{S}_2^\T\\
													   \frac{1}{\sqrt{N}} (\mathds{1} \otimes I_M)^\T \mathcal{H} \mathcal{V}_2 & \displaystyle \frac{1}{N} \sum_{k=1}^N R_{u,k} & 0\\
													   -\mathcal{S}_2 & 0 & 0  \end{array}\right] \times \nonumber\\
													   &\quad\ \left[\begin{array}{c}
	\E\widetilde{\w}_{1,i-1}' \\ 
	\E\widetilde{\boldsymbol{w}}_{2,i-1}' \\ 	
	\E \widetilde{\boldsymbol{\lambda}}_{1,i-1}'
	\end{array}\right] \label{eq:matrix_form_random_error_recursion_centralized_original}
\end{align}
Now, observe that when $\mathcal{H} = I_N \otimes R_u$, we have that the above recursion may be simplified due to
\begin{align}
	\frac{1}{\sqrt{N}} \mathcal{V}_2^\T \mathcal{H} (\mathds{1} \otimes I_M) &= \frac{1}{\sqrt{N}} (V_2^\T \otimes I_M) (I_N \otimes R_u) (\mathds{1} \otimes I_M) \nonumber\\
	&= \frac{1}{\sqrt{N}}  (V_2^\T \mathds{1} \otimes R_u)\nonumber\\
	&= 0
\end{align}
since the vectors $V_2$ are orthogonal to the vector $\mathds{1}_N$ since they are all singular vectors of the incidence matrix $C$. Then, recursion \eqref{eq:matrix_form_random_error_recursion_centralized_original} simplifies to
\begin{align}
	\left[\begin{array}{c}
	\E \widetilde{\w}_{1,i}' \\ 
	\E \widetilde{\boldsymbol{w}}_{2,i}' \\ 
	\E \widetilde{\boldsymbol{\lambda}}_{1,i}'
	\end{array}\right]  &=  \left[\begin{array}{c}
	\E \widetilde{\w}_{1,i-1}' \\ 
	\E\widetilde{\boldsymbol{w}}_{2,i-1}' \\ 
	\E\widetilde{\boldsymbol{\lambda}}_{1,i-1}'
	\end{array}\right] - \nonumber\\
	&\!\!\!\!\!\!\!\!\!\!\!\!\!\!\!\!\!\!\!\!\!\!\!\!\!\!\mu \left[\begin{array}{ccc} \mathcal{V}_2^\T \mathcal{H} \mathcal{V}_2 + \eta \mathcal{S}_2^\T \mathcal{S}_2 & 0 & \mathcal{S}_2^\T\\
													   0 & \displaystyle \frac{1}{N} \sum_{k=1}^N R_{u,k} & 0\\
													   -\mathcal{S}_2 & 0 & 0  \end{array}\right] \!\!\left[\begin{array}{c}
	\E\widetilde{\w}_{1,i-1}' \\ 
	\E\widetilde{\boldsymbol{w}}_{2,i-1}' \\ 	
	\E \widetilde{\boldsymbol{\lambda}}_{1,i-1}'
	\end{array}\right] \label{eq:matrix_form_random_error_recursion_centralized}
\end{align}
By examining \eqref{eq:matrix_form_random_error_recursion_centralized}, we arrive at the interpretation that the transformed vector $\E \widetilde{\boldsymbol{w}}_{2,i}'$ is the mean-error of the primal \emph{reference} recursion:
\begin{align}
	\boldsymbol{w}_{2,i}' = \boldsymbol{w}_{2,i-1}' - \frac{\mu}{N} \sum_{k=1}^N \u_{k,i}^\T (\d_{k}(i) - \u_{k,i} \boldsymbol{w}_{2,i-1}')
\end{align}
This interpretation is consistent with the analysis performed in \cite{jianshu_part1,jianshu_part2} for diffusion and consensus networks. On the other hand, the vector $\E \widetilde{\w}_{1,i}'$ measures node-specific discrepancies, and the dual variable $\E \widetilde{\boldsymbol{\lambda}}_{1,i}'$ only depends on these discrepancies since it seeks to correct them by forcing all nodes to arrive at the same estimate. \qed

\subsection{Mean-Square-Error Recursion}
Different choices for $\Sigma$ in \eqref{eq:MSD_equivalence} allow us to evaluate different performance metrics. For example, when the network mean-square-deviation (MSD) is desired, we set $\Sigma = I_{MN}$. Using \eqref{eq:MSD_equivalence}, we shall instead examine $\E \|\widetilde{\w}_{i}'\|^2_{\Sigma'}$. We first lift $\Sigma'$ to
\begin{align}
	\Gamma' \triangleq \left[\begin{array}{c|c}
	\Sigma' & 0_{NM \times (N-1)M} \\ \hline
	0_{(N-1)M \times NM} & 0_{(N-1)M\times (N-1)M}
	\end{array} \right]
\end{align}
so that the extended model \eqref{eq:matrix_form_random_error_recursion} can be used to evaluate $\E \|\widetilde{\w}_{i}'\|^2_{\Sigma'}$ as follows:
\begin{align}
&\E \left\|\left[\begin{array}{c}
\widetilde{\w}_{1,i}' \\
\widetilde{\w}_{2,i}'
\end{array}\right]\right\|^2_{\Sigma'} \!= \E \left\|\left[\begin{array}{c}
	\widetilde{\w}_{1,i}' \\
	\widetilde{\w}_{2,i}' \\
	\widetilde{\blambda}_{1,i}'
	\end{array}\right]\right\|^2_{\Gamma'} \nonumber\\
&\!=\! \E \left(\!\left\|\!\!\left[\begin{array}{c}
	\widetilde{\w}_{1,i-1}' \\
	\widetilde{\w}_{2,i-1}' \\ 
	\widetilde{\blambda}_{1,i-1}'
	\end{array}\right]\!\!\right\|^2_{{{\boldsymbol{\mathcal{B}}_i'^\T}}\Gamma' \boldsymbol{\mathcal{B}}'_i} \!\right) \!\!+\! \mu^2 \E \left\|\!\!\left[\begin{array}{c}
	\mathcal{V}_2^\T \z_i \\ 
	\mathcal{V}_0^\T \z_i \\ 
	\mathbbold{0}_{(N-1)M}
	\end{array} \right]\!\!\right\|^2_{\Gamma'} \label{eq:MSE_recursion_1}
\end{align}
Following arguments similar to \cite[pp.~381--383]{sayed2012diffbookchapter}, it can be verified that 
\begin{align}
	\E\{{{{\boldsymbol{\mathcal{B}}_i'^\T}}\Gamma' \boldsymbol{\mathcal{B}}'_i}\} &= \mathcal{B}'^\T \Gamma' \mathcal{B}' + O(\mu^2)
\end{align}
We will be assuming sufficiently small step-sizes (which correspond to a slow adaptation regime) so that we can ignore terms that depend on higher-order powers of $\mu$. Arguments in \cite{SayedProcIEEE} show that conclusions obtained under this approximation lead to performance results that are accurate to first-order in the step-size parameters and match well with actual performance for small step-sizes. Therefore, we approximate $\E\{{{{\boldsymbol{\mathcal{B}}'^\T}_i}\Gamma' \boldsymbol{\mathcal{B}}'_i}\} \approx \mathcal{B}'^\T \Gamma' \mathcal{B}'$ and replace \eqref{eq:MSE_recursion_1} by 
\begin{equation}
	\E \left\|\!\!\left[\begin{array}{c}
	\widetilde{\w}_{1,i}' \\
	\widetilde{\w}_{2,i}' \\ 
	\widetilde{\blambda}_{1,i}'
	\end{array}\right]\!\!\right\|^2_{\Gamma'} \!\!\!\!\!\approx\! \E \left\|\!\!\left[\begin{array}{c}
	\widetilde{\w}_{1,i-1}' \\
	\widetilde{\w}_{2,i-1}' \\ 
	\widetilde{\blambda}_{1,i-1}'
	\end{array}\right]\!\!\right\|^2_{{\mathcal{B}}'^\T \Gamma' \mathcal{B}'} \!\!\!\!\!\!\!\!\!\!\!\!+ \mu^2 \E \left\|\!\!\left[\begin{array}{c}
	\mathcal{V}_2^\T \z_i \\ 
	\mathcal{V}_0^\T \z_i \\ 
	\mathbbold{0}_{(N-1)M}
	\end{array} \right]\!\!\right\|^2_{\Gamma'} \label{eq:MSE_recursion_approx1}
\end{equation}
which, by using properties of the Kronecker product operation, can be rewritten in the equivalent form:
\begin{align}
	\E \left\|\left[\begin{array}{c}
	\widetilde{\w}_{1,i}' \\
	\widetilde{\w}_{2,i}' \\ 
	\widetilde{\blambda}_{1,i}'
	\end{array}\right]\right\|^2_{\gamma'} &\approx \E \left\|\left[\begin{array}{c}
	\widetilde{\w}_{1,i-1}' \\
	\widetilde{\w}_{2,i-1}' \\ 
	\widetilde{\blambda}_{1,i-1}'
	\end{array}\right]\right\|^2_{\mathcal{F}' \gamma'} \!\!\!\!\!\!+\! \mu^2 \left(\mathrm{bvec}(R_h^\T)\right)^\T \!\!\gamma' \label{eq:vec_weighted_expression}
\end{align}
In this second form, we are using the notation $\|x\|^2_{\Sigma}$ and $\|x\|^2_{\sigma}$ interchangeably in terms of the weighting matrix $\Sigma$ or its vectorized form, $\sigma=\mbox{\rm bvec}(\Sigma)$, where $\mathrm{bvec}(X)$ denotes block vectorization \cite{cassio}. In block vectorization, each $M\times M$ submatrix of $X$ is vectorized, and the vectors are stacked on top of each other. Moreover, we are introducing
\begin{align}
	\gamma' &\triangleq \mathrm{bvec}(\Gamma')\\
	\mathcal{F}' &\triangleq \mathcal{B}'^\T \otimes_b \mathcal{B}'^\T \label{eq:script_F'}\\
	R_h &\triangleq \E \left[\begin{array}{c}
	\mathcal{V}_2^\T \z_i \\ 
	\mathcal{V}_0^\T \z_i \\ 
	\mathbbold{0}_{(N-1)M}
	\end{array} \right] \left[\begin{array}{c}
	\mathcal{V}_2^\T \z_i \\ 
	\mathcal{V}_0^\T \z_i \\ 
	\mathbbold{0}_{(N-1)M}
	\end{array} \right]^\T\\
	&= \left[\!\!\begin{array}{cc|c} 
	            \mathcal{V}_2^\T \mathcal{R}_{z} \mathcal{V}_2 & 0_{(N-1) M \!\times\! M} & 0_{(N-1) M} \\
	            0_{M \!\times\! (N-1) M} & \displaystyle \frac{1}{N} \!\sum_{k=1}^N \!\sigma_{v,k}^2 R_{u,k} & 0_{M \!\times\! (N-1) M}\\\hline
	            0_{(N-1) M} & 0_{(N-1) M \!\times\! M} & 0_{(N-1)M}
	         \end{array}\!\!\right]
\end{align}
where $\otimes_b$ denotes the block Kronecker product \cite{TracySingh} and 
	\begin{equation}
		\mathcal{R}_z \triangleq \E[\z_i \z_i^\T] = \mathrm{blockdiag}\{\sigma_{v,k}^2 R_{u,k}\} \label{eq:R_z}
	\end{equation}

\section{Stability Analysis}
\label{sec:Stability}
Using the just derived mean and mean-square update relations, we start by analyzing the stability of the algorithm \eqref{eq:AL_primal_alg}--\eqref{eq:AL_dual_alg}.   We first review the following concepts.
\begin{definition}[Hurwitz Matrix]
	A real square matrix is called Hurwitz if all its eigenvalues possess negative real parts \cite{antsaklis2005linear,kailath1980linear}. \qed
\end{definition}

\begin{definition}[Stable Matrix]
	A real square matrix is called (Schur) stable if all its eigenvalues lie inside the unit circle \cite{antsaklis2005linear,kailath1980linear}. \qed
\end{definition}
\noindent The following lemma relates the two concepts \cite[p.~39]{poliak1987introduction}.
\begin{lemma}[Hurwitz and stable matrices]
	\label{lem:discrete-time-stability}
	Let $A$ be a Hurwitz matrix. Then, the matrix $B = I + \mu A$ is stable if, and only if,
	\begin{align}
		0 < \mu < \min_j \left\{-2 \frac{\mathfrak{Re}\{\lambda_j(A)\}}{|\lambda_j(A)|^2}\right\} \label{eq:general_step-size_range}
	\end{align}
	where $\lambda_j(A)$ is the $j$-th eigenvalue of the matrix $A$. \qed
\end{lemma}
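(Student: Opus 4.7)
The plan is to use the spectral mapping principle: since $B = I + \mu A$ is a polynomial in $A$, its eigenvalues are exactly $\lambda_j(B) = 1 + \mu \lambda_j(A)$ for $j = 1, \ldots, n$, counted with algebraic multiplicity. By definition, $B$ is Schur stable if and only if $|1 + \mu \lambda_j(A)| < 1$ for every $j$, so the task reduces to translating this collection of scalar inequalities into a single bound on $\mu$.

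To do that, I would write each eigenvalue in real/imaginary form as $\lambda_j(A) = \alpha_j + i \beta_j$, where $\alpha_j = \mathfrak{Re}\{\lambda_j(A)\}$ satisfies $\alpha_j < 0$ by the Hurwitz assumption. A direct expansion gives
\begin{align*}
|1 + \mu \lambda_j(A)|^2 &= (1+\mu\alpha_j)^2 + \mu^2 \beta_j^2 \\
&= 1 + 2\mu \alpha_j + \mu^2 |\lambda_j(A)|^2 .
\end{align*}
Requiring this quantity to be strictly less than $1$ is equivalent to $\mu\bigl(2 \alpha_j + \mu |\lambda_j(A)|^2\bigr) < 0$. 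For $\mu > 0$ this simplifies to
\[
\mu < -\frac{2\alpha_j}{|\lambda_j(A)|^2} = -\frac{2\,\mathfrak{Re}\{\lambda_j(A)\}}{|\lambda_j(A)|^2},
\]
and the right-hand side is well defined and strictly positive precisely because $\lambda_j(A) \neq 0$ (else $A$ would not be Hurwitz) and $\alpha_j < 0$.

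Intersecting these per-eigenvalue bounds yields the stated condition \eqref{eq:general_step-size_range}. Conversely, whenever $\mu$ lies in this interval, each inequality $|1+\mu\lambda_j(A)|<1$ holds by the same equivalence, so every eigenvalue of $B$ lies strictly inside the unit circle and $B$ is Schur stable. I do not anticipate a genuine obstacle here: the argument is a direct algebraic manipulation of complex moduli, and the only subtlety worth flagging is noting that $|\lambda_j(A)|^2 > 0$ (so the bound is finite) and that the resulting min over $j$ is attained because the spectrum is finite.
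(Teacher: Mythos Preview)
Your argument is correct and complete: the spectral mapping $\lambda_j(B)=1+\mu\lambda_j(A)$ followed by the quadratic-in-$\mu$ manipulation of $|1+\mu\lambda_j(A)|^2<1$ is exactly the standard route. The paper does not actually supply its own proof of this lemma; it simply cites \cite[p.~39]{poliak1987introduction} and moves on, so there is nothing to compare against beyond noting that your derivation matches the textbook argument.
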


\subsection{Mean Stability}
In order to show that $\E \widetilde{\w}_i'\rightarrow \mathbbold{0}_{NM}$, we need to show that the matrix $\mathcal{B}'$ in \eqref{eq:B_prime} is stable. To establish this fact, we rely on Lemma \ref{lem:discrete-time-stability} and on the following two auxiliary results. 
\begin{lemma}[Hurwitz stability of a matrix]
	\label{lem:hurwitz}
	Let a block square matrix $G$ have the following form:
	\begin{align}
		G = -\left[\begin{array}{cc}
		X & Y^\T \\ 
		-Y & 0_{Q \times Q}
		\end{array} \right] \label{eq:lem:Hurwitz_stability_matrix_W}
	\end{align}
	and let the matrix $X \in \mathbb{R}^{P \times P}$ be positive-definite and $Y \in \mathbb{R}^{Q \times P}$ possess full row rank. Then, the matrix $G$ is Hurwitz.
\end{lemma}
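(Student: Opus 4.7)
The plan is to show directly that every eigenvalue $\lambda$ of $G$ satisfies $\mathfrak{Re}\{\lambda\}<0$ by an energy-type argument applied to the associated eigenvector. Since $G$ is not symmetric, its eigenvalues are generally complex and the eigenvectors live in $\mathbb{C}^{P+Q}$; however, because the blocks $X$ and $Y$ are real, we can still extract a clean real equation by taking real parts of the appropriate inner products.

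Concretely, I would let $\lambda$ be an eigenvalue of $G$ with eigenvector $v=\operatorname{col}\{a,b\}\ne 0$, $a\in\mathbb{C}^{P}$, $b\in\mathbb{C}^{Q}$. Expanding $Gv=\lambda v$ gives the two block relations $-Xa-Y^{\T}b=\lambda a$ and $Ya=\lambda b$. Pre-multiplying the first by $a^{\ast}$ and the second by $b^{\ast}$, and taking conjugate transpose of the second, yields $-a^{\ast}Xa-a^{\ast}Y^{\T}b=\lambda\|a\|^{2}$ and $a^{\ast}Y^{\T}b=\bar{\lambda}\|b\|^{2}$. Substituting the second into the first produces the single scalar identity
\begin{equation}
a^{\ast}Xa \;=\; -\lambda\|a\|^{2}-\bar\lambda\|b\|^{2}.
\end{equation}

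Taking real parts and observing that $2\,\mathfrak{Re}\{a^{\ast}Xa\}=a^{\ast}(X+X^{\T})a$, I obtain
\begin{equation}
\tfrac{1}{2}\,a^{\ast}(X+X^{\T})a \;=\; -\mathfrak{Re}\{\lambda\}\,(\|a\|^{2}+\|b\|^{2}).
\end{equation}
Because $X$ is positive-definite, its symmetric part $X+X^{\T}$ is symmetric and positive-definite, so the left-hand side is nonnegative and equals zero only when $a=0$. Hence $\mathfrak{Re}\{\lambda\}\le 0$, with equality only if $a=0$.

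The main (and only real) obstacle is to rule out a purely imaginary eigenvalue, i.e., the borderline case $a=0$. For this I would substitute $a=0$ back into the block equations: the second yields $\lambda b=0$, and the first becomes $Y^{\T}b=0$. Since $Y$ has full row rank, $Y^{\T}$ has trivial nullspace, so $b=0$ as well, contradicting $v\ne 0$. Therefore $\mathfrak{Re}\{\lambda\}<0$ strictly for every eigenvalue, which is precisely the Hurwitz property. The argument uses only the stated hypotheses and is short; no structural lemma about saddle-point matrices is needed beyond the basic rank/positivity facts.
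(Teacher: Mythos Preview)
Your argument is correct and is essentially the same energy-type proof the paper gives: both compute the real part of $v^{*}Gv$ for an eigenpair, obtain $\mathfrak{Re}\{\lambda\}(\|a\|^{2}+\|b\|^{2})=-\mathfrak{Re}\{a^{*}Xa\}$, and then use the full row rank of $Y$ to rule out the borderline case $a=0$. The only cosmetic difference is that you pass through the symmetric part $X+X^{\T}$, which is unnecessary here since a real positive-definite $X$ is already symmetric and $a^{*}Xa$ is real.
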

\begin{proof}
	The argument follows a similar procedure to the proof of Proposition 4.4.2 in \cite[p.~449]{bertsekas1999nonlinear}. Let $\lambda_j$ denote the $j$-th eigenvalue of $G$ and let the corresponding eigenvector be $g_j = [g_{1,j},g_{2,j}]^\T \neq \mathbbold{0}_{Q+P}$ so that $G g_j = \lambda_j g_j$, where $g_{1,j} \in \mathbb{C}^{P\times 1}$ and $g_{2,j} \in \mathbb{C}^{Q\times 1}$. Then, we have that
	\begin{align}
		\mathfrak{Re}\left\{g_j^* G g_j \right\} &= \mathfrak{Re}\left\{\lambda_j g_j^* \cdot g_j \right\} \nonumber\\
												 &= \mathfrak{Re}\left\{\lambda_j \right\} (\|g_{1,j}\|^2 + \|g_{2,j}\|^2) \label{eq:lem:Hurwitz_stability_first_form}
	\end{align}
	Similarly, using \eqref{eq:lem:Hurwitz_stability_matrix_W}, we have that the same quantity is given by
	\begin{align}
		\mathfrak{Re}\left\{g_j^* G g_j \right\} &= \mathfrak{Re}\left\{ -g_{1,j}^* X g_{1,j} \!-\! g_{1,j}^* Y^\T g_{2,j} \!+\! g_{2,j}^* Y g_{1,j} \right\} \nonumber\\
		&= -\mathfrak{Re}\left\{ g_{1,j}^* X g_{1,j} \right\} \label{eq:lem:Hurwitz_stability_second_form}
	\end{align}
	since
	\begin{equation}
		\mathfrak{Re}\{g_{2,j}^* Y g_{1,j}\} \!=\! \mathfrak{Re}\{(g_{2,j}^* Y g_{1,j})^*\} \!=\! \mathfrak{Re}\{g_{1,j}^* Y^\T g_{2,j}\}
	\end{equation}
	Now, combining \eqref{eq:lem:Hurwitz_stability_first_form}--\eqref{eq:lem:Hurwitz_stability_second_form}, we have that
	\begin{align}
		\mathfrak{Re}\left\{\lambda_j \right\} (\|g_{1,j}\|^2 + \|g_{2,j}\|^2) &=  -\mathfrak{Re}\left\{ g_{1,j}^* X g_{1,j} \right\} \label{eq:proof_hurwitz_second_last_eq}
	\end{align}
	Since the matrix $X$ is positive-definite, then either 1) $\mathfrak{Re}\left\{\lambda_j \right\} < 0$ and $g_{1,j} \neq \mathbbold{0}_P$ or 2) $\mathfrak{Re}\left\{\lambda_j \right\} = 0$ and $g_{1,j} = \mathbbold{0}_P$. Suppose now that $g_{1,j} = \mathbbold{0}_P$, then 
	\begin{align}
		G g_j = \lambda_j g_j \Rightarrow -Y^\T g_{2,j} = \lambda_j g_{1,j} = \mathbbold{0}_P
	\end{align}
	but since we assumed that $g_j \neq \mathbbold{0}_{Q+P}$ while $g_{1,j} = \mathbbold{0}_P$, then we have that $g_{2,j} \neq \mathbbold{0}_Q$. This implies that $g_{2,j} \neq \mathbbold{0}_Q$ is in the nullspace of $Y^\T$, which is not possible since $Y$ has full row rank. Therefore $g_{1,j} \neq \mathbbold{0}_{P}$ and we conclude that $\mathfrak{Re}\left\{\lambda_j \right\} < 0$ and thus the matrix $G$ is Hurwitz.
\end{proof}
We can also establish the following result regarding the positive-definiteness of $\mathcal{V}^\T \mathcal{H} \mathcal{V}+\eta \sLambda$.
\begin{lemma}[Positive-definiteness of $\mathcal{V}^\T \mathcal{H} \mathcal{V}+\eta \sLambda$]
	\label{lem:pos_def_v'Hv}
	Let the sum of regressor covariance matrices satisfy \eqref{eq:partial_observability}, and let the network be connected. Then, there exists $\bar{\eta} \geq 0$ such that for all $\eta \!>\! \bar{\eta}$, we have that $\mathcal{V}^\T \mathcal{H} \mathcal{V}\!+\!\eta \sLambda$ is positive-definite.
\end{lemma}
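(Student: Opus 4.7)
The plan is to exploit the block structure of $\mathcal{V}=[\mathcal{V}_2\ \mathcal{V}_0]$ from \eqref{eq:subdivided_script_V} together with the block form of $\sLambda$ from \eqref{eq:sLambda}, and to combine them with the two hypotheses at hand: partial observability \eqref{eq:partial_observability} and connectedness of the network (which implies that $C$ has rank $N-1$, so that $S_2$ and hence $\mathcal{S}_2$ are invertible). In fact I expect to be able to take $\bar\eta = 0$, i.e., to show that $\mathcal{V}^\T \mathcal{H}\mathcal{V}+\eta\sLambda$ is strictly positive-definite for \emph{every} $\eta>0$.

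The first observation is that both summands are positive semi-definite: $\mathcal{H}=\mathrm{blockdiag}\{R_{u,k}\}\succeq 0$ gives $\mathcal{V}^\T \mathcal{H}\mathcal{V}\succeq 0$, and $\sLambda=\mathrm{blockdiag}\{\mathcal{S}_2^\T\mathcal{S}_2,0\}\succeq 0$. Consequently $\mathcal{V}^\T \mathcal{H}\mathcal{V}+\eta\sLambda\succeq 0$ for all $\eta\geq 0$, and the whole question reduces to showing that its null space is trivial when $\eta>0$.

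To verify triviality of the null space, suppose $x=\mathrm{col}\{x_1,x_2\}$ with $x_1\in\mathbb{R}^{(N-1)M}$ and $x_2\in\mathbb{R}^{M}$ satisfies $x^\T(\mathcal{V}^\T \mathcal{H}\mathcal{V}+\eta\sLambda)x=0$. Since both quadratic forms are nonnegative, they must vanish separately. The second condition reads $\eta\|\mathcal{S}_2 x_1\|^2=0$, and invertibility of $\mathcal{S}_2$ (which is where connectedness enters) forces $x_1=0$. Substituting back into the first condition leaves $x_2^\T \mathcal{V}_0^\T \mathcal{H}\mathcal{V}_0 x_2 = \tfrac{1}{N}\,x_2^\T\!\bigl(\sum_{k=1}^N R_{u,k}\bigr)x_2 = 0$, and the partial-observability assumption \eqref{eq:partial_observability} then yields $x_2=0$. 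Hence $x=0$, so the matrix is strictly positive-definite.

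There is no real obstacle; the only subtlety is to notice that the two structural ingredients of the lemma—connectedness and \eqref{eq:partial_observability}—act on two orthogonal subspaces (the range of $\mathcal{V}_2$ and the range of $\mathcal{V}_0$, respectively), and that these two pieces together are exactly what is needed to fill in the kernel of $\mathcal{V}^\T \mathcal{H}\mathcal{V}$ with the augmentation term $\eta\sLambda$. An equivalent route, should one prefer it, is to apply the Schur complement with the $(2,2)$ block $\tfrac{1}{N}\sum_k R_{u,k}>0$: the resulting Schur complement equals $\eta\mathcal{S}_2^\T\mathcal{S}_2$ plus a PSD term, hence is strictly PD for every $\eta>0$.
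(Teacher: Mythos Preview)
Your argument is correct, and in fact sharper than the paper's: you establish that one may take $\bar\eta=0$, whereas the paper only exhibits a (potentially positive) threshold. The paper proceeds via the Schur complement route you mention at the end: it writes the Schur complement of $\mathcal{V}^\T\mathcal{H}\mathcal{V}+\eta\sLambda$ with respect to the $(2,2)$ block $\tfrac{1}{N}\sum_k R_{u,k}>0$ as $\eta\,\mathcal{S}_2^\T\mathcal{S}_2+\mathcal{Z}$, and then invokes Weyl's inequality to obtain the sufficient condition $\eta>-\lambda_{\min}(\mathcal{Z})/\lambda_{\min}(\mathcal{S}_2^\T\mathcal{S}_2)$. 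Your primary null-space argument is more elementary---it avoids Schur complements and Weyl entirely and makes transparent how connectedness (forcing $x_1=0$) and partial observability (forcing $x_2=0$) act on the two complementary subspaces. Your closing remark also implicitly improves the paper's bound: since $\mathcal{Z}$ is the Schur complement of the PSD matrix $\mathcal{V}^\T\mathcal{H}\mathcal{V}$ with respect to a positive-definite block, $\mathcal{Z}\succeq 0$, so the paper's Weyl bound is in fact nonpositive and again yields $\bar\eta=0$, a point the paper does not make explicit.
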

\begin{proof}
	First, note that 
\begin{align}
	\mathcal{V}^\T \mathcal{H} \mathcal{V} \!+\! \eta \sLambda &= \left[\!\!\begin{array}{cc}
	\mathcal{V}_2^\T \mathcal{H} \mathcal{V}_2 + \eta \mathcal{S}_2^\T \mathcal{S}_2& \mathcal{V}_2^\T \mathcal{H} \mathcal{V}_0 \\ 
	\mathcal{V}_0^\T \mathcal{H} \mathcal{V}_2 & \displaystyle \frac{1}{N} \sum_{k=1}^N R_{u,k}
	\end{array} \!\!\right] \label{eq:y'Hy}
\end{align}
To show that \eqref{eq:y'Hy} is positive-definite, it is sufficient to show that the Schur complement relative to the lower-right block is positive definite. This Schur complement is given by $\eta \mathcal{S}_2^\T \mathcal{S}_2 + \mathcal{Z}$ where we defined
\begin{align}
	\mathcal{Z} \triangleq \mathcal{V}_2^\T \mathcal{H} \mathcal{V}_2 - \mathcal{V}_2^\T \mathcal{H} \mathcal{V}_0 \left(\frac{1}{N} \sum_{k=1}^N R_{u,k}\right)^{-1} \mathcal{V}_0^\T \mathcal{H} \mathcal{V}_2 \label{eq:D}
\end{align}
Then, by Weyl's inequality \cite[p.~181]{HornJohnsonVol1}, we have that the Schur complement is positive-definite when
\begin{align}
	\lambda_{\min}(\eta \mathcal{S}_2^\T \mathcal{S}_2 + \mathcal{Z}) &\geq \eta \lambda_{\min}(\mathcal{S}_2^\T \mathcal{S}_2) + \lambda_{\min}(\mathcal{Z}) > 0 
\end{align}
which is guaranteed when 
\begin{align}
	\eta >  \frac{-\lambda_{\min}(\mathcal{Z})}{\lambda_{\min}(\mathcal{S}_2^\T \mathcal{S}_2)} \label{eq:eta_bar}
\end{align}
where $\lambda_{\min}(\mathcal{S}_2^\T \mathcal{S}_2) > 0$ is the second-smallest eigenvalue of the Laplacian matrix (algebraic connectivity of the topology graph or Fiedler value \cite{fiedler1973algebraic,fiedler1975property,Bertrand20131106,BertrandSPM}) and is positive when the network is connected.
\end{proof}

Using the preceding lemmas, we are now ready to prove the mean-stability of algorithm \eqref{eq:AL_primal_alg}--\eqref{eq:AL_dual_alg} for large $\eta > 0$.
\begin{theorem}[Mean stability of the AL algorithm]
\label{thm:generalTopology_mean_stability_partial_observability}
	Under \eqref{eq:partial_observability} and over connected networks, there exists $\bar{\eta}$ such that for all $\eta > \bar{\eta}$, the matrix $\mathcal{B}'$ is stable, i.e., $\rho(\mathcal{B}') \!<\! 1$ for small $\mu$.
\end{theorem}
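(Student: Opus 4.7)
The plan is to apply Lemma \ref{lem:discrete-time-stability} to $\mathcal{B}' = I - \mu \mathcal{R}'$, which reduces the problem to showing that $-\mathcal{R}'$ is Hurwitz for $\eta$ sufficiently large. This in turn will be obtained from Lemma \ref{lem:hurwitz} after recognizing that $\mathcal{R}'$ has exactly the block structure required there.

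First I would identify the blocks. Partition $\mathcal{R}'$ in \eqref{eq:R_prime} by grouping the first two block rows/columns (associated with the primal variable $\widetilde{\w}'$) against the third (associated with the dual $\widetilde{\blambda}'_1$). Then $\mathcal{R}'$ takes the form
\begin{equation*}
\mathcal{R}' = \begin{bmatrix} X & Y^{\T} \\ -Y & 0_{(N-1)M} \end{bmatrix},
\end{equation*}
where
\begin{equation*}
X = \mathcal{V}^{\T} \mathcal{H} \mathcal{V} + \eta\, \sLambda, \qquad Y = \begin{bmatrix} \mathcal{S}_2 & 0_{(N-1)M \times M} \end{bmatrix}.
\end{equation*}
This is exactly the pattern in \eqref{eq:lem:Hurwitz_stability_matrix_W} with $P = NM$ and $Q = (N-1)M$, so $-\mathcal{R}'$ matches the sign convention of Lemma \ref{lem:hurwitz}.

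Next I would verify the two hypotheses of Lemma \ref{lem:hurwitz}. The positive-definiteness of $X$ is precisely Lemma \ref{lem:pos_def_v'Hv}: under the partial observability condition \eqref{eq:partial_observability} and connectedness, there exists $\bar{\eta}$ such that $X \succ 0$ for every $\eta > \bar{\eta}$. The full row rank of $Y$ is immediate: $\mathcal{S}_2$ is a nonsingular $(N-1)M \times (N-1)M$ diagonal matrix, so the rows of $Y = [\mathcal{S}_2 \ \ 0]$ are linearly independent. Invoking Lemma \ref{lem:hurwitz} then yields that $-\mathcal{R}'$ is Hurwitz for every $\eta > \bar{\eta}$.

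To conclude, I would apply Lemma \ref{lem:discrete-time-stability} with $A = -\mathcal{R}'$ and $B = I_{(2N-1)M} + \mu A = \mathcal{B}'$: since every eigenvalue $\lambda_j(-\mathcal{R}')$ has strictly negative real part, the quantity $-2\,\mathfrak{Re}\{\lambda_j(-\mathcal{R}')\}/|\lambda_j(-\mathcal{R}')|^2$ is strictly positive for each $j$, so choosing $\mu$ below the minimum of these finitely many positive numbers makes $\mathcal{B}'$ Schur stable, i.e.\ $\rho(\mathcal{B}') < 1$. The only genuine work is verifying the block identification of $\mathcal{R}'$ and citing Lemma \ref{lem:pos_def_v'Hv} to handle the $\eta$-dependence; the rest is direct application of the preparatory lemmas. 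The main conceptual point I would highlight is that the role of $\eta$ is confined entirely to guaranteeing positive-definiteness of the primal block $X$, after which Hurwitzness is automatic regardless of $\eta$, and the step-size range for stability is then determined by the spectrum of $-\mathcal{R}'$.
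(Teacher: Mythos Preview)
Your proposal is correct and follows essentially the same route as the paper: both identify the block structure of $\mathcal{R}'$ as in Lemma~\ref{lem:hurwitz}, invoke Lemma~\ref{lem:pos_def_v'Hv} to obtain positive-definiteness of the primal block $X=\mathcal{V}^{\T}\mathcal{H}\mathcal{V}+\eta\sLambda$ for $\eta>\bar{\eta}$, note that $Y=[\mathcal{S}_2\ 0]$ has full row rank because $\mathcal{S}_2$ is invertible, and then apply Lemma~\ref{lem:discrete-time-stability} to conclude stability of $\mathcal{B}'$ for small $\mu$.
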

\begin{proof}
	From Lemma \ref{lem:pos_def_v'Hv}, we have that $\mathcal{V}^\T \mathcal{H} \mathcal{V} + \eta \sLambda$ is positive-definite for large $\eta$. Using \eqref{eq:R_prime} and \eqref{eq:y'Hy} we write 
	\begin{align}
		-\mathcal{R}' &= 	-\left[\!\!\begin{array}{cc|c} \multicolumn{2}{c|}{\multirow{2}{*}{$\mathcal{V}^\T \mathcal{H} \mathcal{V} + \eta \sLambda$}} & \mathcal{S}_2^\T\\
													  \multicolumn{2}{c|}{}  & 0_{M\!\times\!(N-1)M}\\\hline
													   -\mathcal{S}_2 & 0_{(N-1)M\!\times\!M} & 0_{(N-1)M\!\times\!(N-1)M}  \end{array}\!\!\right]									   \label{eq:R'_mean_stability}
	\end{align}
	The matrix $-{\cal R}'$ so defined is in the same form required by Lemma \ref{lem:hurwitz} where the bottom-left block has full-row-rank since $\mathcal{S}_2$ is invertible. We conclude from Lemma \ref{lem:hurwitz} that $-\mathcal{R}'$ is Hurwitz. Then, by Lemma \ref{lem:discrete-time-stability}, there exists some range for $0 < \mu < \bar{\mu}$ for which the matrix ${\cal B}'=I-\mu {\cal R}'$ is stable, where
	\begin{align}
		\bar{\mu} = \min_j \left\{2 \frac{\mathfrak{Re}\{\lambda_j(\mathcal{R}')\}}{|\lambda_j(\mathcal{R}')|^2}\right\} \label{eq:mu_bar}
	\end{align}
\end{proof}

We conclude that the AL algorithm \eqref{eq:AL_primal_alg}--\eqref{eq:AL_dual_alg} with $\eta > 0$ is mean stable under partial observation conditions (i.e., when some of the $R_{u,k}$ may be singular but the aggregate sum \eqref{eq:partial_observability} is still positive-definite). Observe though that the result of Theorem \ref{thm:generalTopology_mean_stability_partial_observability} does not necessarily apply to the AH algorithm since for that algorithm, $\eta = 0$, while the bound on the right-hand side of \eqref{eq:eta_bar} can be positive (in fact, there are cases in which the matrix $-\mathcal{R}'$ will not be Hurwitz when $\eta = 0$---see App.~\ref{app:AH_partial_observability}). It is nevertheless still possible to prove the stability of the AH algorithm under the more restrictive assumption of a positive-definite ${\cal H}$ (which requires all individual $R_{u,k}$ to be positive-definite rather than only their sum as we have assumed so far in \eqref{eq:partial_observability}).

\begin{corollary}[Mean stability of the AL and AH algorithms]
\label{cor:generalTopology_mean_stability}
	Let the matrix $\mathcal{H}$ be positive-definite. Furthermore, let the network be connected. Then, the matrix $\mathcal{B}'$ is stable, i.e., $\rho(\mathcal{B}') < 1$ for small enough step-sizes.
\end{corollary}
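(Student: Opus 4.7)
The plan is to mirror the structure of the proof of Theorem~\ref{thm:generalTopology_mean_stability_partial_observability}, but to exploit the stronger hypothesis that $\mathcal{H}$ itself is positive-definite so as to bypass Lemma~\ref{lem:pos_def_v'Hv} and the associated lower bound \eqref{eq:eta_bar} on $\eta$. In other words, I would show that the top-left block of $-\mathcal{R}'$ (as written in \eqref{eq:R'_mean_stability}) is already positive-definite \emph{without} any help from the regularization term $\eta \sLambda$, so the argument covers both the AL case ($\eta>0$) and the AH case ($\eta=0$) uniformly.

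First I would observe that $\mathcal{V} = V \otimes I_M$ is orthogonal because $V$ is orthogonal, so $\mathcal{V}^\T \mathcal{H} \mathcal{V}$ is a congruence transform of $\mathcal{H}$ and hence positive-definite whenever $\mathcal{H}$ is positive-definite. Since $\sLambda$ in \eqref{eq:sLambda} is positive semi-definite and $\eta \geq 0$, the sum $\mathcal{V}^\T \mathcal{H} \mathcal{V} + \eta \sLambda$ is positive-definite for \emph{every} $\eta \geq 0$. This is where the argument departs from the proof of Theorem~\ref{thm:generalTopology_mean_stability_partial_observability}: in that earlier proof, positive-definiteness of the top-left block of $-\mathcal{R}'$ required taking $\eta$ sufficiently large in order to dominate a Schur-complement correction; under the current stronger assumption, no such dominance is needed.

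Next I would invoke Lemma~\ref{lem:hurwitz} with the identification
\begin{align*}
X \;=\; \mathcal{V}^\T \mathcal{H} \mathcal{V} + \eta \sLambda, \qquad Y \;=\; -\,[\,\mathcal{S}_2 \;\;\; 0_{(N-1)M \times M}\,],
\end{align*}
so that $-\mathcal{R}'$ has exactly the block structure in \eqref{eq:lem:Hurwitz_stability_matrix_W}. The hypothesis that the network is connected ensures that $\mathcal{S}_2$ is square and invertible, hence $Y$ has full row rank. Together with the positive-definiteness of $X$ from the previous step, Lemma~\ref{lem:hurwitz} then yields that $-\mathcal{R}'$ is Hurwitz.

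Finally I would appeal to Lemma~\ref{lem:discrete-time-stability}: since $\mathcal{B}' = I_{(2N-1)M} - \mu \mathcal{R}'$ and $-\mathcal{R}'$ is Hurwitz, the discrete-time matrix $\mathcal{B}'$ is stable ($\rho(\mathcal{B}') < 1$) for every step-size in the range $0 < \mu < \bar{\mu}$ given by \eqref{eq:mu_bar}, where the bound $\bar{\mu}$ is now well defined because all real parts $\mathfrak{Re}\{\lambda_j(\mathcal{R}')\}$ are strictly positive. There is no real obstacle in the argument: the entire content of the corollary lies in recognizing that strengthening \eqref{eq:partial_observability} to full positive-definiteness of $\mathcal{H}$ removes the need to control the Schur complement in Lemma~\ref{lem:pos_def_v'Hv}, after which the remaining steps are direct applications of Lemmas~\ref{lem:hurwitz} and~\ref{lem:discrete-time-stability}.
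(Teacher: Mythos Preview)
Your proposal is correct and follows essentially the same approach as the paper: you observe that positive-definiteness of $\mathcal{H}$ makes $\mathcal{V}^\T \mathcal{H}\mathcal{V}+\eta\sLambda$ positive-definite for every $\eta\geq 0$, then apply Lemma~\ref{lem:hurwitz} and Lemma~\ref{lem:discrete-time-stability} exactly as in the proof of Theorem~\ref{thm:generalTopology_mean_stability_partial_observability}. The only difference is that you spell out more of the supporting details (the congruence argument for $\mathcal{V}^\T\mathcal{H}\mathcal{V}$ and the explicit identification of $X$, $Y$, and the full row rank of $Y$), which the paper leaves implicit.
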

\begin{proof}
	Since $\mathcal{H}$ is now assumed positive-definite, we have that $\mathcal{V}^\T \mathcal{H} \mathcal{V} + \eta \sLambda$ in \eqref{eq:R'_mean_stability} is positive-definite for any $\eta \geq 0$. We may then appeal to Lemma \ref{lem:hurwitz} to conclude that the matrix $-\mathcal{R}'$ is Hurwitz, and by Lemma \ref{lem:discrete-time-stability}, there exists some range for $0 < \mu < \bar{\mu}$ so that  $\mathcal{B}'$ is stable, where $\bar{\mu}$ is given by \eqref{eq:mu_bar}.
\end{proof}

The assumption that the matrix $\mathcal{H}$ is positive-definite is only satisfied when all regressor covariance matrices $R_{u,k}$ are positive-definite. We observe, therefore, that the AH algorithm cannot generally solve the partial observation problem in which only the sum of the covariance matrices is positive-definite and not each one individually. Furthermore, the AL algorithm may not be able to solve this problem either unless the regularizer $\eta$ is large enough.

\subsection{Mean-Square Stability}

\begin{theorem}[Mean-square stability]
	\label{thm:generalTopology_mean_square_stability_partial_observability}
	Under the same conditions of Theorem \ref{thm:generalTopology_mean_stability_partial_observability}, there exists some $\bar{\eta}$ such that for all $\eta > \bar{\eta}$, the matrix $\mathcal{F}'$ is stable, i.e., $\rho(\mathcal{F}') < 1$ for small step-sizes.
\end{theorem}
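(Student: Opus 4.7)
The plan is to reduce the stability of $\mathcal{F}'$ to the already-established mean stability of $\mathcal{B}'$ in Theorem~\ref{thm:generalTopology_mean_stability_partial_observability}, by exploiting the spectral structure of the block Kronecker product used in the definition $\mathcal{F}' = \mathcal{B}'^\T \otimes_b \mathcal{B}'^\T$ given in \eqref{eq:script_F'}. The strategy is thus \emph{not} to re-analyze the mean-square recursion from scratch, but to leverage an eigenvalue identity that ties $\rho(\mathcal{F}')$ directly to $\rho(\mathcal{B}')$.

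First, I would invoke the key property of the Tracy--Singh product: for block matrices with conformal and compatible block partitions, $A \otimes_b B$ is permutation-similar to the ordinary Kronecker product $A \otimes B$, so the two matrices share the same spectrum. The matrix $\mathcal{B}'^\T = I - \mu \mathcal{R}'^\T$ inherits a block partition into $M \times M$ sub-blocks from the construction of $\mathcal{R}'$ in \eqref{eq:R_prime} (indeed, each of $\mathcal{V}_2^\T \mathcal{H} \mathcal{V}_2$, $\eta \mathcal{S}_2^\T \mathcal{S}_2$, $\mathcal{V}_2^\T \mathcal{H} \mathcal{V}_0$, and $\mathcal{S}_2^\T$ has $M \times M$ blocks), so the hypothesis on the block structure is met and the permutation-similarity indeed applies here.

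Next, I would use the well-known Kronecker spectrum identity: the eigenvalues of $A \otimes B$ are precisely the products $\lambda_i(A) \lambda_j(B)$. Applied to $A = B = \mathcal{B}'^\T$, this yields
\begin{align}
\rho(\mathcal{F}') \;=\; \rho\bigl(\mathcal{B}'^\T \otimes \mathcal{B}'^\T\bigr) \;=\; \rho(\mathcal{B}'^\T)^2 \;=\; \rho(\mathcal{B}')^2.
\end{align}
Finally, I would invoke Theorem~\ref{thm:generalTopology_mean_stability_partial_observability}, which under the same hypotheses (connected network, partial-observability condition \eqref{eq:partial_observability}, and $\eta > \bar{\eta}$ as in \eqref{eq:eta_bar}) provides a non-empty range $0 < \mu < \bar{\mu}$ with $\bar{\mu}$ given in \eqref{eq:mu_bar} over which $\rho(\mathcal{B}') < 1$. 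On this same range, squaring yields $\rho(\mathcal{F}') < 1$, which is the desired conclusion.

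I do not expect a genuine obstacle here beyond two bookkeeping points. The first is confirming the permutation-similarity of $\otimes_b$ with $\otimes$ in our setting; this is routine but must be stated clearly, since otherwise the spectrum of $\otimes_b$ is not obviously $\{\lambda_i \lambda_j\}$. The second is that the mean-square recursion \eqref{eq:MSE_recursion_approx1} was derived under the small-$\mu$ approximation $\E\{\boldsymbol{\mathcal{B}}'^\T_i \Gamma' \boldsymbol{\mathcal{B}}'_i\} \approx \mathcal{B}'^\T \Gamma' \mathcal{B}'$, so the stability claim is in the usual first-order-in-$\mu$ sense; a refined argument tracking the $O(\mu^2)$ correction can only further restrict $\bar{\mu}$ but will not alter the leading-order conclusion.
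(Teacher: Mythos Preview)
Your proposal is correct and follows essentially the same approach as the paper: reduce the stability of $\mathcal{F}'$ to that of $\mathcal{B}'$ via the spectral identity $\rho(\mathcal{B}'^\T \otimes_b \mathcal{B}'^\T) = \rho(\mathcal{B}')^2$, and then invoke Theorem~\ref{thm:generalTopology_mean_stability_partial_observability}. The paper's proof is terser (it simply asserts the spectral-radius identity without spelling out the permutation-similarity step), but the argument is the same.
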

\begin{proof}
	We know that $\rho(\mathcal{F}') \!=\! \rho(\mathcal{B}'^\T \!\otimes_b\! \mathcal{B}'^\T) \!=\! \rho(\mathcal{B}')^2$.	By Thm.~\ref{thm:generalTopology_mean_stability_partial_observability}, $\rho(\mathcal{B}') \!<\! 1$, and we have that $\rho(\!\mathcal{F}'\!) \!<\! 1$ for small $\mu$.
\end{proof}

Therefore, when the step-size is sufficiently small, the AL algorithm can be guaranteed to converge in the mean and mean-square senses under partial observation for large enough $\eta$. We can similarly establish an analogous result to Corollary \ref{cor:generalTopology_mean_stability} for the AH algorithm under the more restrictive assumption of positive-definite $\mathcal{H}$.

\begin{corollary}[Mean-square stability of the AL and AH algorithms]
	\label{cor:generalTopology_mean_square_stability}
	Under the same conditions of Corollary \ref{cor:generalTopology_mean_stability}, then the matrix $\mathcal{F}'$ is stable, i.e., $\rho(\mathcal{F}') < 1$ for small step-sizes.
\end{corollary}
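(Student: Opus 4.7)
The plan is to mirror exactly the proof of Theorem \ref{thm:generalTopology_mean_square_stability_partial_observability}, substituting the weaker conclusion of Corollary \ref{cor:generalTopology_mean_stability} for that of Theorem \ref{thm:generalTopology_mean_stability_partial_observability}. The only structural fact I need is the relation between the spectral radii of $\mathcal{F}'$ and $\mathcal{B}'$ established via the block Kronecker product definition in \eqref{eq:script_F'}.

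First I would recall from the definition $\mathcal{F}' \triangleq \mathcal{B}'^\T \otimes_b \mathcal{B}'^\T$ and invoke the known property of the Tracy--Singh block Kronecker product that its eigenvalues are the pairwise products of the eigenvalues of its factors; in particular $\rho(\mathcal{F}') = \rho(\mathcal{B}'^\T)^2 = \rho(\mathcal{B}')^2$. This is the same identity used in the proof of Theorem \ref{thm:generalTopology_mean_square_stability_partial_observability}, so no new algebraic work is required.

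Next, I would apply Corollary \ref{cor:generalTopology_mean_stability}: under the hypothesis that $\mathcal{H}$ is positive-definite and the network is connected, that corollary guarantees a nonempty interval $0 < \mu < \bar{\mu}$ (with $\bar{\mu}$ given by \eqref{eq:mu_bar}) on which $\rho(\mathcal{B}') < 1$. Combining this with the Kronecker identity immediately yields $\rho(\mathcal{F}') = \rho(\mathcal{B}')^2 < 1$ on the same step-size range, which is the desired conclusion.

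I do not expect any serious obstacle: the argument is essentially a one-line reduction to Corollary \ref{cor:generalTopology_mean_stability} via the spectral property of $\otimes_b$. The only subtlety worth mentioning is that Corollary \ref{cor:generalTopology_mean_stability} replaces the partial-observation hypothesis \eqref{eq:partial_observability} plus ``large $\eta$'' with the stronger assumption that every $R_{u,k}$ is positive-definite, which ensures $\mathcal{V}^\T \mathcal{H} \mathcal{V} + \eta \sLambda$ is positive-definite for any $\eta \ge 0$; this is precisely what allows the statement to cover both the AL algorithm and the AH algorithm ($\eta=0$) simultaneously, and no further modification of the Theorem \ref{thm:generalTopology_mean_square_stability_partial_observability} proof template is needed.
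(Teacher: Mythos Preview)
Your proposal is correct and matches the paper's own proof essentially verbatim: the paper simply notes that the argument is similar to Theorem~\ref{thm:generalTopology_mean_square_stability_partial_observability}, using the stability of $\mathcal{B}'$ from Corollary~\ref{cor:generalTopology_mean_stability} together with $\rho(\mathcal{F}')=\rho(\mathcal{B}')^2$. No additional ingredients are needed.
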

\begin{proof}
	The argument is similar to Theorem \ref{thm:generalTopology_mean_square_stability_partial_observability} by noting that ${\cal B}'$ is also stable. 
\end{proof}

Next, we will examine the step-size range \eqref{eq:mu_bar} required for convergence, and we will see that the AL and AH algorithms are not as stable as non-cooperative and diffusion strategies.

\subsection{Examination of Stability Range}
\label{ssec:stepsize_range}
In order to gain insight into the step-size range defined by \eqref{eq:mu_bar}, we will analyze the eigenvalues of the matrix $\mathcal{R}'$ in the case when $\mathcal{H} = I_N \otimes R_u$, where $R_u > 0$.

\begin{theorem}[Eigenvalues of $\mathcal{R}'$]
\label{thm:eigenvalues_minusR2}
Assuming $\mathcal{H} = I_N \otimes R_u$ where $R_u$ is positive-definite, the $(2N -1)M$ eigenvalues of the matrix $\mathcal{R}'$ are given by $\{\lambda_{\ell,k}(\mathcal{R}')\} = \sigma \cup \tau$, where 
\begin{align}
	\sigma &\triangleq \left\{\lambda_\ell(R_u): 1\leq \ell \leq M\right\} \label{eq:eigenvalues_sigma}\\
	\tau &\triangleq  \left\{\frac{1}{2} (\lambda_\ell(R_u) + \eta \lambda_k(L)) \pm \right. \label{eq:eigenvalues_tau}\\
		&\!\!\!\!\!\left.\frac{1}{2} \sqrt{(\lambda_\ell(R_u) \!+\! \eta \lambda_k(L))^2 \!-\! 4 \lambda_k(L)}: 1 \!\leq\! k \!\leq\! \!N\!-\!1, 1\!\leq\! \ell\! \leq\! M\right\} \nonumber
\end{align}
where $L$ is the Laplacian matrix of the network topology.
\end{theorem}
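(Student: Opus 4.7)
The plan is, first, to exploit the simplification $\mathcal{H} = I_N \otimes R_u$ to decouple $\mathcal{R}'$ into two non-interacting pieces. Since the columns of $V_2$ are orthogonal to $\mathds{1}_N$ (they are singular vectors of $C$ lying outside its null space), we get $\mathcal{V}_2^\T \mathcal{H} \mathcal{V}_0 = (V_2^\T \mathds{1}_N/\sqrt{N})\otimes R_u = 0$, exactly as in the derivation of \eqref{eq:matrix_form_random_error_recursion_centralized}. Using also $\mathcal{V}_2^\T \mathcal{H} \mathcal{V}_2 = I_{N-1}\otimes R_u$ and $\frac{1}{N}\sum_k R_{u,k} = R_u$, the middle row and column of $\mathcal{R}'$ completely decouple from the rest. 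Consequently $\mathcal{R}'$ is similar to the direct sum of an $M \times M$ block equal to $R_u$ (whose eigenvalues form the set $\sigma$) and the remaining $2(N-1)M \times 2(N-1)M$ block
\[
\mathcal{R}_1 \triangleq \begin{bmatrix} I_{N-1}\otimes R_u + \eta\,\mathcal{S}_2^\T \mathcal{S}_2 & \mathcal{S}_2^\T \\ -\mathcal{S}_2 & 0 \end{bmatrix}
\]
which contributes the remaining $2(N-1)M$ eigenvalues.

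Next, I would exploit the diagonal structure of $S_2$ to further block-diagonalize $\mathcal{R}_1$. Writing $s_k$ for the $k$-th nonzero singular value of $C$ (so $\lambda_k(L) = s_k^2$), both $\mathcal{S}_2 = S_2 \otimes I_M$ and $\mathcal{S}_2^\T \mathcal{S}_2 = D_1 \otimes I_M$ are block-diagonal with blocks indexed by $k=1,\ldots,N-1$. A permutation similarity that groups the $k$-th primal block with the $k$-th dual block therefore turns $\mathcal{R}_1$ into $\bigoplus_{k=1}^{N-1} R_k$ with
\[
R_k = \begin{bmatrix} R_u + \eta\,\lambda_k(L)\,I_M & s_k I_M \\ -s_k I_M & 0_M \end{bmatrix},
\]
so that $\mathrm{spec}(\mathcal{R}_1) = \bigcup_{k=1}^{N-1} \mathrm{spec}(R_k)$.

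Finally, I would compute $\mathrm{spec}(R_k)$ by a direct eigenvector elimination. An eigenpair $(\mu,[x^\T\; y^\T]^\T)$ satisfies $-s_k x = \mu y$ and $(R_u + \eta\lambda_k(L)I_M)x + s_k y = \mu x$. Since $s_k \neq 0$, the value $\mu = 0$ is ruled out (it would force $x = 0$ and then $y = 0$). Eliminating $y$ yields $R_u\, x = (\mu - \eta\lambda_k(L) + \lambda_k(L)/\mu)\,x$, so $x$ must be an eigenvector of $R_u$ with some eigenvalue $\lambda_\ell(R_u)$, and $\mu$ satisfies the scalar quadratic
\[
\mu^2 - (\lambda_\ell(R_u) + \eta\lambda_k(L))\,\mu + \lambda_k(L) = 0
\]
whose two roots are precisely the two branches appearing in $\tau$. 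This produces $2M(N-1)$ eigenvalues from $\tau$ together with the $M$ eigenvalues from $\sigma$, matching the total dimension $(2N-1)M$ of $\mathcal{R}'$. The only mildly delicate step is justifying that the permutation similarity on $\mathcal{R}_1$ really produces the claimed direct sum; this rests on the observation that $S_2$ and $D_1$ are diagonal in the same basis, so the Kronecker factors $S_2 \otimes I_M$ and $D_1 \otimes I_M$ have perfectly aligned block-diagonal structures that permit the simultaneous regrouping of primal and dual coordinates by index $k$.
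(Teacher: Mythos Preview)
Your argument is correct. It differs from the paper's proof in its organization rather than its substance. The paper computes the characteristic polynomial of $\mathcal{R}'$ directly via Schur's determinantal formula \eqref{eq:determinant_of_block_matrix}, using the identity $\mathcal{V}^\T(I_N\otimes R_u)\mathcal{V}=I_N\otimes R_u$ to see that the Schur complement $(I_N\otimes R_u)+\eta\sLambda-\lambda I-\lambda^{-1}\sLambda$ is block-diagonal in $k$, and then reads off the same quadratic in $\lambda$ for each $(k,\ell)$. You instead first exploit $V_2^\T\mathds{1}_N=0$ to split $\mathcal{R}'$ by similarity into $R_u\oplus\mathcal{R}_1$, then permute $\mathcal{R}_1$ into $\bigoplus_{k}R_k$, and finally solve each $2M\times2M$ eigenproblem by eliminating $y$. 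Your route makes the block-diagonal structure explicit at the matrix level and avoids determinant identities, at the cost of one extra similarity step; the paper's route is a single Schur-complement computation on the full matrix but hides the decoupling inside the determinant factorization. Both lead to the identical quadratic $\mu^2-(\lambda_\ell(R_u)+\eta\lambda_k(L))\mu+\lambda_k(L)=0$ and the same eigenvalue count.
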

\begin{proof}
	To obtain the eigenvalues of $\mathcal{R}'$, we will solve for $\lambda_{k,\ell}$ using  
	\begin{align}
		\mathrm{det}(\mathcal{R}' - \lambda_{k,\ell} I) = 0
	\end{align}	
	To achieve this, we call upon Schur's determinantal formula for a block matrix \cite[p.~5]{laub}:
	\begin{align}
		\mathrm{det}\left[\begin{array}{cc}
		A & B \\ 
		C & D
		\end{array} \right] = \mathrm{det}(D) \mathrm{det}(A - B D^{-1} C) \label{eq:determinant_of_block_matrix}
	\end{align}
to note, using \eqref{eq:R'_mean_stability}, that $\mathrm{det}(\mathcal{R}'-\lambda_{k,\ell} I)$ is given by
\begin{align}
	&\det\!\left[\!\begin{array}{cc|c} \multicolumn{2}{c|}{\multirow{2}{*}{$\mathcal{V}^\T \mathcal{H} \mathcal{V} + \eta \sLambda- \lambda_{k,\ell} I_{MN}$}} & \mathcal{S}_2^\T\\
													  \multicolumn{2}{c|}{}  & 0_{M\!\times\!(N-1)M}\\\hline
													   -\mathcal{S}_2 & 0_{(N-1)M\!\times\!M} & - \lambda_{k,\ell} I_{(N-1)M}  \end{array}\!\right] =\nonumber\\
		&\det(-\lambda_{k,\ell} I_{(N-1)M}) \det\left(\!(I_N \otimes R_u) \!+\! \eta \sLambda \!-\! \lambda_{k,\ell} I_{MN} \!-\! \frac{\sLambda}{\lambda_{k,\ell}} \!\right) \label{eq:evals_intermediate_step1}
\end{align}
	where we used \eqref{eq:sLambda} and the fact that 
	\begin{equation}
		\mathcal{V}^\T \!(I_N \!\otimes\! R_u) \mathcal{V} \!=\! (V^\T \!\!\otimes\! I_M)\! (I_N \!\otimes\! R_u)\! (V \!\otimes\! I_M) \!=\! I_N \!\otimes\! R_u
	\end{equation}
	We already demonstrated in Corollary \ref{cor:generalTopology_mean_stability} that the matrix $-\mathcal{R}'$ is Hurwitz and, therefore, $\lambda_{k,\ell} \neq 0$. We conclude that $\det(-\lambda_{k,\ell} I_{(N-1)M})\neq 0$ and we focus on the last term in \eqref{eq:evals_intermediate_step1}. Observe that the matrix $(I_N \otimes R_u) + \eta \sLambda - \lambda_{k,\ell} I_{MN} - \frac{1}{\lambda_{k,\ell}} \sLambda$ is block-diagonal, with the $k$-th block given by $R_u + \left(-\lambda_{k,\ell} + \left(\eta - \frac{1}{\lambda_{k,\ell}}\right) \lambda_k(L)\right) I_M$ since the matrix $\sLambda$ is a diagonal matrix with diagonal blocks $\lambda_{k}(L) \otimes I_M$, where $L$ is the Laplacian matrix of the network (see \eqref{eq:sLambda}). Therefore, since the smallest eigenvalue, $\lambda_N(L)$, of $L$ is zero, we obtain
	\begin{align}
		&\det\left((I_N \otimes R_u) - \lambda_{k,\ell} I_{MN} + \left(\eta - \frac{1}{\lambda_{k,\ell}}\right) \sLambda\right) \nonumber\\
		&= \det ( R_u -\lambda_{N,\ell} I_M ) \times \\
		&\quad\ \prod_{k=1}^{N-1} \det \left( R_u + \left(-\lambda_{k,\ell} + \left(\eta - \frac{1}{\lambda_{k,\ell}}\right) \lambda_k(L)\right) I_M \right) \nonumber
	\end{align}
	It follows, by setting the above to zero, that $M$ of the eigenvalues of ${\cal R}'$, $\{\lambda_{1,1},\ldots,\lambda_{1,M}\}$, are given by:
	\begin{align}
		\lambda_{N,\ell} = \lambda_\ell(R_u)
	\end{align}
	The remaining $2 (N - 1) M$ eigenvalues are obtained by solving algebraic equations of the form:
	\begin{align}
		\det\left( \Delta +\left(-\lambda_{k,\ell} + \left(\eta - \frac{1}{\lambda_{k,\ell}}\right) \lambda_k(L)\right) I_M \right) = 0
	\end{align}
	where $\Delta$ is a diagonal matrix with the eigenvalues of $R_u$ along its diagonal.
	This yields
	\begin{align}
		\lambda_{k,\ell}^2 - (\lambda_\ell(R_u) + \eta \lambda_k(L)) \lambda_{k,\ell} + \lambda_k(L) &= 0
	\end{align}
	Solving the above quadratic equation, we obtain the remaining $2 (N-1) M$ eigenvalues given by expression \eqref{eq:eigenvalues_tau}.
\end{proof}

\begin{figure*}[t]
\centering
\subfigure[$\mu = 1.1$]{\includegraphics[width=0.28\textwidth]{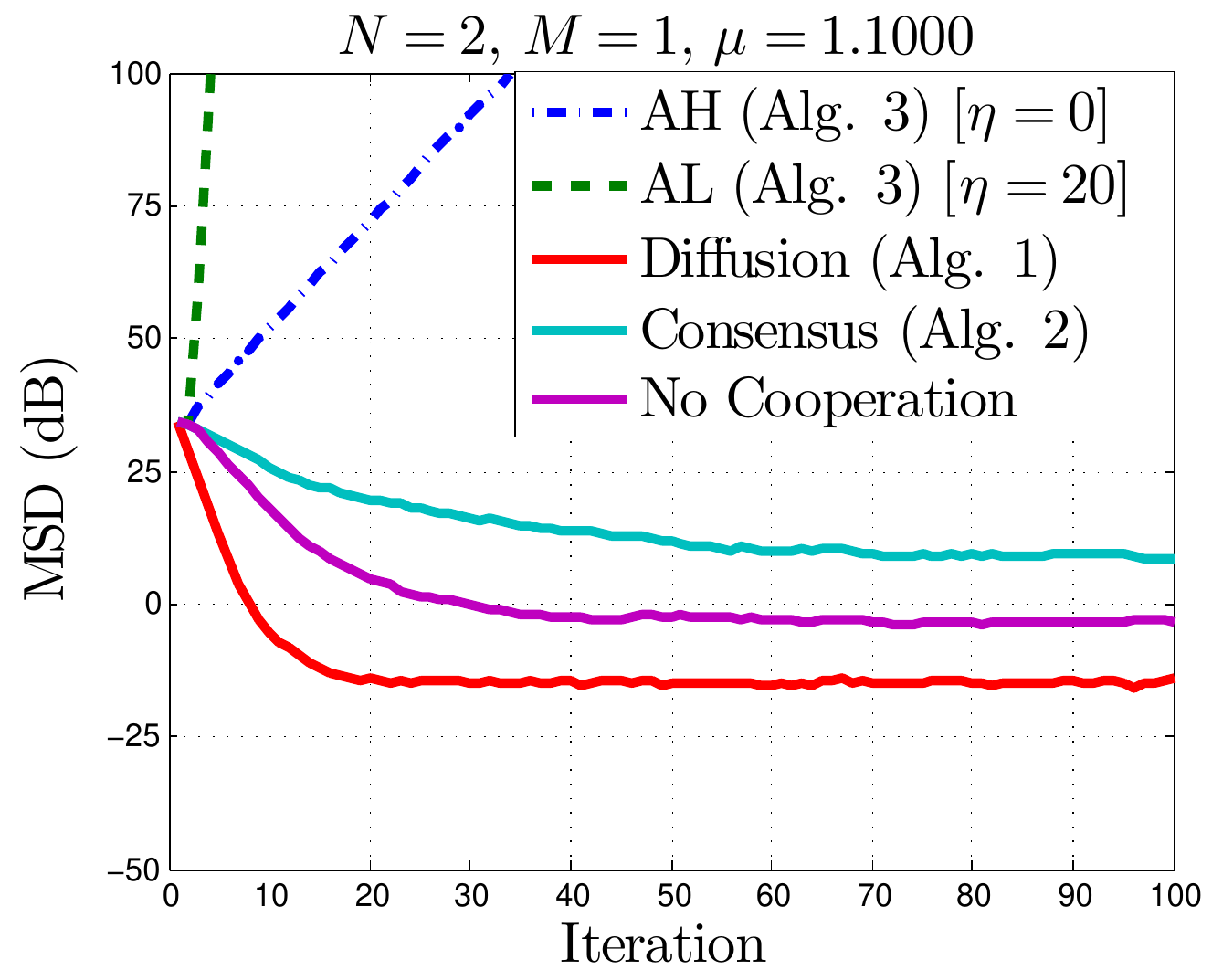}}
\quad\quad
\subfigure[$\mu = 0.75$]{\includegraphics[width=0.28\textwidth]{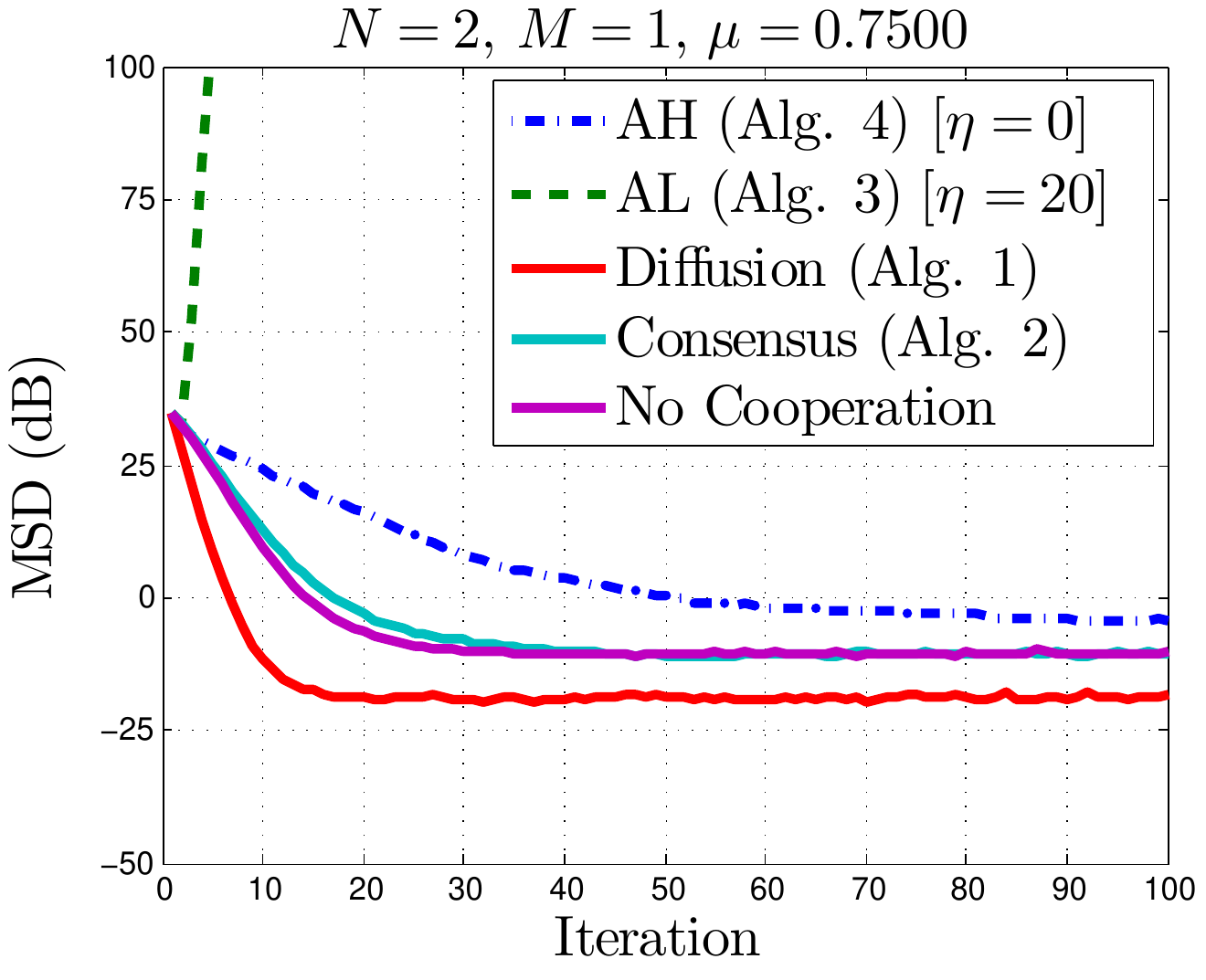}}
\quad\quad
\subfigure[$\mu = 0.0469$]{\includegraphics[width=0.28\textwidth]{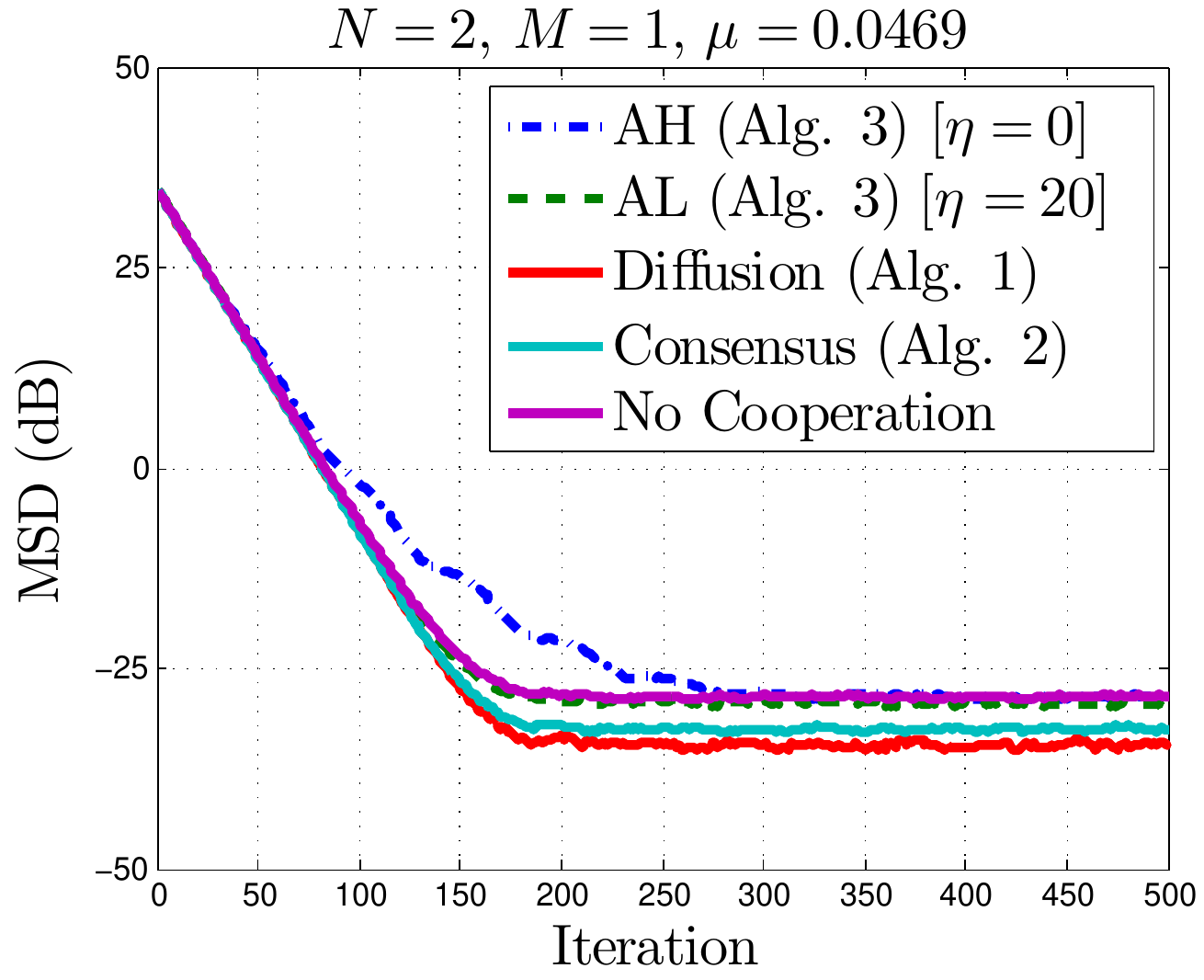}}
\caption{(a) Simulated example where diffusion, consensus, and non-cooperative algorithms converge but the AL ($\eta = 20.0$) and AH algorithms fail to converge when $\mu = 11/10$; (b) Same example with $\mu = 3/4$, and thus the AH algorithm converges, but not the AL algorithm; (c) All algorithms converge when $\mu = 3/64$.}
\label{fig:example_divergence}
\end{figure*}

\noindent Since we have shown that $\lambda_\ell(R_u)$ are always eigenvalues of $\mathcal{R}'$, we have that the step-size range required for convergence of the AL algorithm is always bounded above (and, hence, smaller) than the stability bound for the non-cooperative and diffusion algorithms when $\mathcal{H} = I_N \otimes R_u$ (see \eqref{eq:diff_stability_range}). 

Another aspect we need to consider is how the stability range \eqref{eq:mu_bar} depends on the regularization parameter $\eta$ and the network topology. It is already known that the mean stability range for diffusion strategies \eqref{eq:diff_stability_range} is independent of the network topology \cite{sayed2013diffusion}. In contrast, it is also known that the stability range for consensus strategies \eqref{eq:consensus_1}--\eqref{eq:consensus_2} is dependent on the network topology \cite{tu2012diffusion,SayedProcIEEE}. We are going to see that the stability of the AL algorithm for large $\eta$ is also dependent on the network topology. Indeed, as $\eta\rightarrow\infty$, we have from \eqref{eq:eigenvalues_tau} that some of the eigenvalues of $\mathcal{R}'$, besides the ones fixed at $\lambda_\ell(R_u)$, will approach $\eta \lambda_k(L)$. This means that the step-size range \eqref{eq:mu_bar} required for convergence will need to satisfy:
	\begin{align}
		\mu < \min_{1\leq k\leq N-1} \left\{\frac{2}{\eta \lambda_k(L)}\right\} = \frac{2}{\eta \lambda_1(L)} \label{eq:convergence_condition_large_eta}
	\end{align}
	where $\lambda_{1}(L)$ denotes the largest eigenvalue of $L$. Clearly, as $\eta \rightarrow \infty$, the upper-bound on the step-size approaches zero. This means that the algorithm is sensitive to both the regularization parameter $\eta$ and the topology (through $\lambda_1(L)$). Lower and upper bounds for $\lambda_1(L)$ can be derived \cite{Grone2}. For example, when the network is fully-connected, it is known that $\lambda_1(L)=N$ and, hence, the bound \eqref{eq:convergence_condition_large_eta} becomes
	\begin{align}
		\mu < \frac{2}{\eta \cdot N}  \label{eq:fully_connected_large_eta_mu_range}
	\end{align}	
	On the other hand, for a network of $N$ agents with maximum degree $\delta$, a lower-bound for the largest eigenvalue of $L$ is \cite{anderson1985eigenvalues}
	\begin{align}
		\lambda_1(L) \geq \frac{N}{N-1} \delta
	\end{align}
	and a necessary (not sufficient) step-size range from \eqref{eq:convergence_condition_large_eta} is
	\begin{align}
	\mu < \frac{2 \cdot (N-1)}{\eta \cdot N\cdot \delta} \label{eq:max_connectivity_large_eta_mu_range}
	\end{align}
	This result implies that as the network size increases (in the case of a fully-connected network) or the connectivity in the network improves ($\delta$ increases), the algorithm becomes less stable (smaller stability range for the step-size is necessary), unlike other distributed algorithms such as consensus or diffusion (Algs.~\ref{alg:Diffusion}--\ref{alg:consensus}).  We conclude from \eqref{eq:convergence_condition_large_eta}, therefore, that in this regime, the stability condition on the step-size is dependent on the network topology.
	
In the following example, we demonstrate a case where the convergence step-size ranges for the AL and AH algorithms are strictly smaller than \eqref{eq:diff_stability_range} so that the stability range for these algorithms can be smaller than non-cooperative agents.

\begin{example}
\label{ex:diverging_example}
\emph{Let $M=1$ and consider the simple $2$-node network illustrated in Fig.~\ref{fig:two_node_network}.
\begin{figure}[H]
	\centering
	\includegraphics[width=0.17\textwidth]{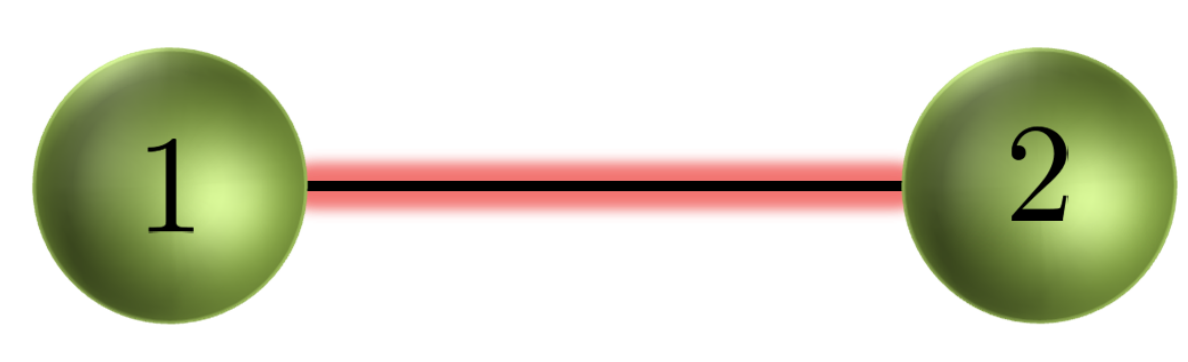}
	\caption{Network topology for Example \ref{ex:diverging_example}.}
	\label{fig:two_node_network}
\end{figure}
\noindent The Laplacian matrix is given by
	\begin{align}
		L = \left[\begin{array}{cc}
		+1 & -1 \\ 
		-1 & +1
		\end{array} \right]
	\end{align}
	The eigenvalues of $L$ are $\{0, 2\}$ and $\mathcal{S}_2 = 1$. Let $R_{u,1} = R_{u,2} = 1$. We consider the AH algorithm. It can be verified that when $\eta = 0$, 
	\begin{align}
		\mathcal{R}' = \left[\begin{array}{rrr}
		1 & 0 & -1 \\ 
		0 & 1 & 0 \\ 
		1 & 0 & 0
		\end{array} \right]
	\end{align}
	so that 
	\begin{align}
		\lambda(\mathcal{B}') = \left\{1\!-\!\mu, 1 \!-\! \frac{\mu}{2} \!+\! \mu\frac{\sqrt{3}}{2} j, 1 \!-\! \mu\frac{1}{2} \!-\! \mu\frac{\sqrt{3}}{2} j\right\}
	\end{align}
	Now assume every agent $k$ runs a non-cooperative algorithm of the following form independently of the other agents
	\begin{align}
		\boldsymbol{w}_{k,i} = \boldsymbol{w}_{k,i-1} + \mu \u_{k,i}^\T (\d_k(i) - \u_{k,i} \boldsymbol{w}_{k,i-1})
	\end{align}
	Then, from \eqref{eq:diff_stability_range}, we know that a sufficient condition on the step-size in order for this non-cooperative solution and for the  diffusion strategy to be mean stable is $0 < \mu < 2$. If we select $\mu=3/2$, then 
	\begin{align}
		\rho(\mathcal{B}') &= \max \left\{\frac{1}{4}, \frac{\sqrt{7}}{2}\right\} = \frac{\sqrt{7}}{2} > 1
	\end{align}
	which implies that the AH algorithm will diverge in the mean error sense. Indeed, it can be verified from \eqref{eq:mu_bar} that the AH algorithm is mean stable when $\mu < 1$. As for the AL algorithm, we may use \eqref{eq:fully_connected_large_eta_mu_range} for the large $\eta$ regime to conclude that the AL algorithm will converge when $\mu < 1/\eta$. If we let $\eta = 20$, then we see that the step-size needs to satisfy $\mu < 1/20 = 0.05$. In Figure \ref{fig:example_divergence}, we simulate this example for $\eta = 20$, and $\mu = 1.1$, $\mu = 0.75$, and $\mu = 3/64 \approx 0.0469$, and set the noise variance for both nodes at $\sigma_v^2 = 0.1$. The consensus algorithm is outperformed by the non-cooperative algorithm when the step-size is large, as predicted by the analysis in \cite{tu2012diffusion}. We observe that, even in this simple fully-connected setting, the AH and AL algorithms are less stable than the diffusion strategies. In fact, the AH and AL algorithms are less stable than the non-cooperative strategy as well. We also observe that the AH and AL algorithms do not match the steady-state MSD performance of the other distributed algorithms. We examine this issue next.} \qed
\end{example}

\begin{figure}[!t]
\centerline{\subfigure[MSD vs. $\eta$.]{\label{fig:eta_tradeoff}\centering \includegraphics[width=0.4\textwidth]{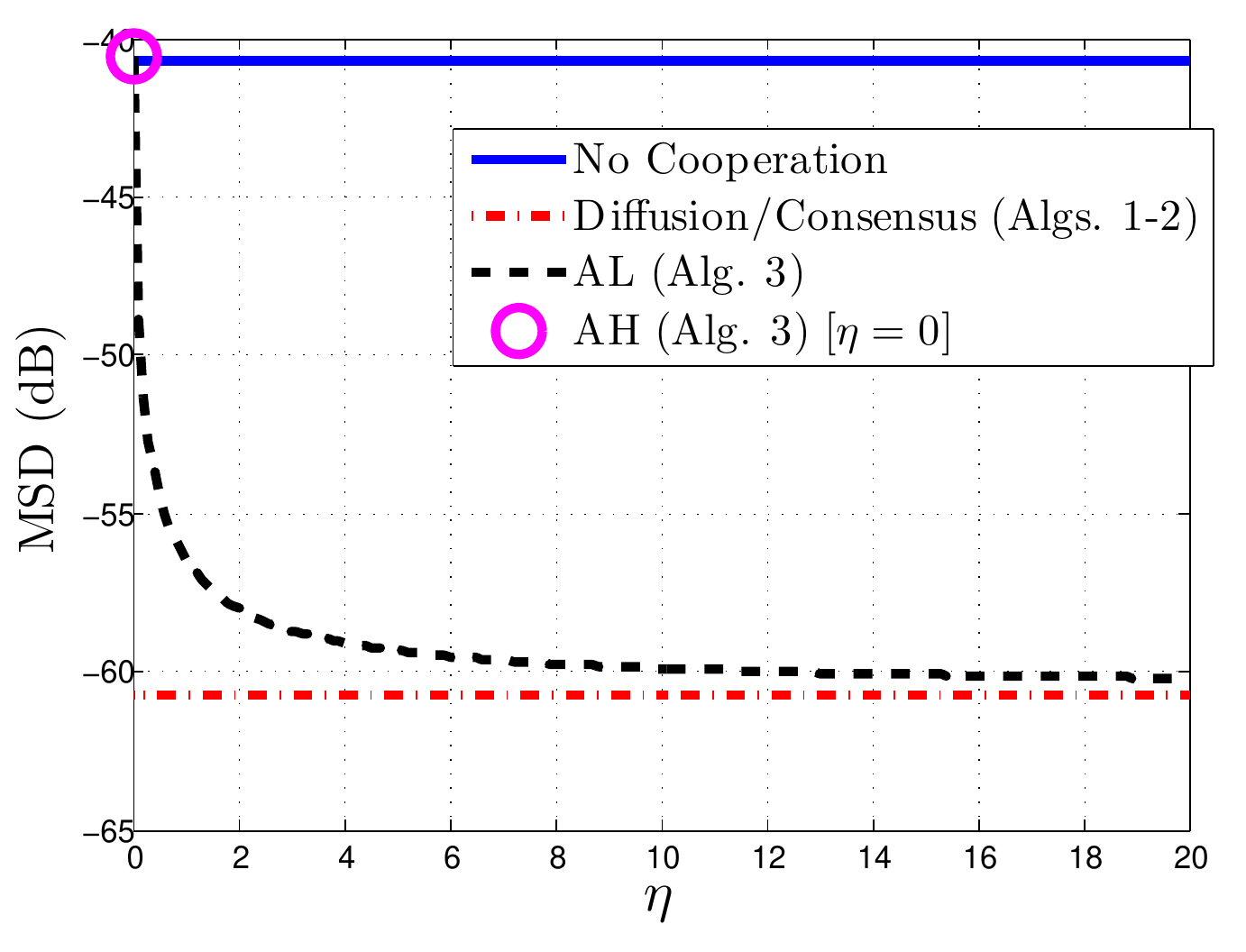}}
}
\centerline{
\subfigure[Algorithm characteristic curves.]{\label{fig:characteristic}\centering \includegraphics[width=0.4\textwidth]{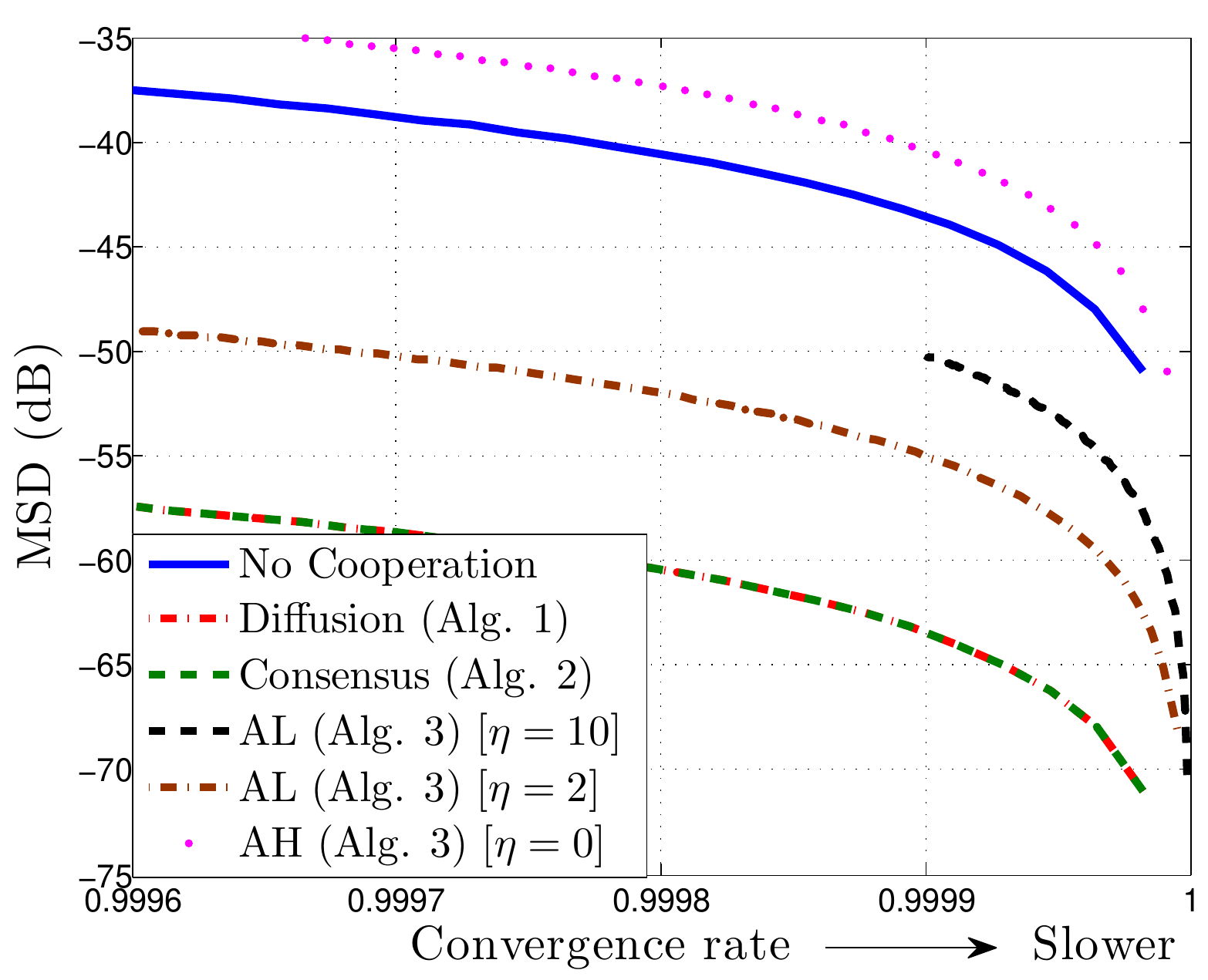}}}
\centerline{
\subfigure[MSD vs. $\eta$ for fixed convergence rate of $0.99995$.]{\label{fig:eta_tradeoff_fixed_convergence_rate}\centering \includegraphics[width=0.4\textwidth]{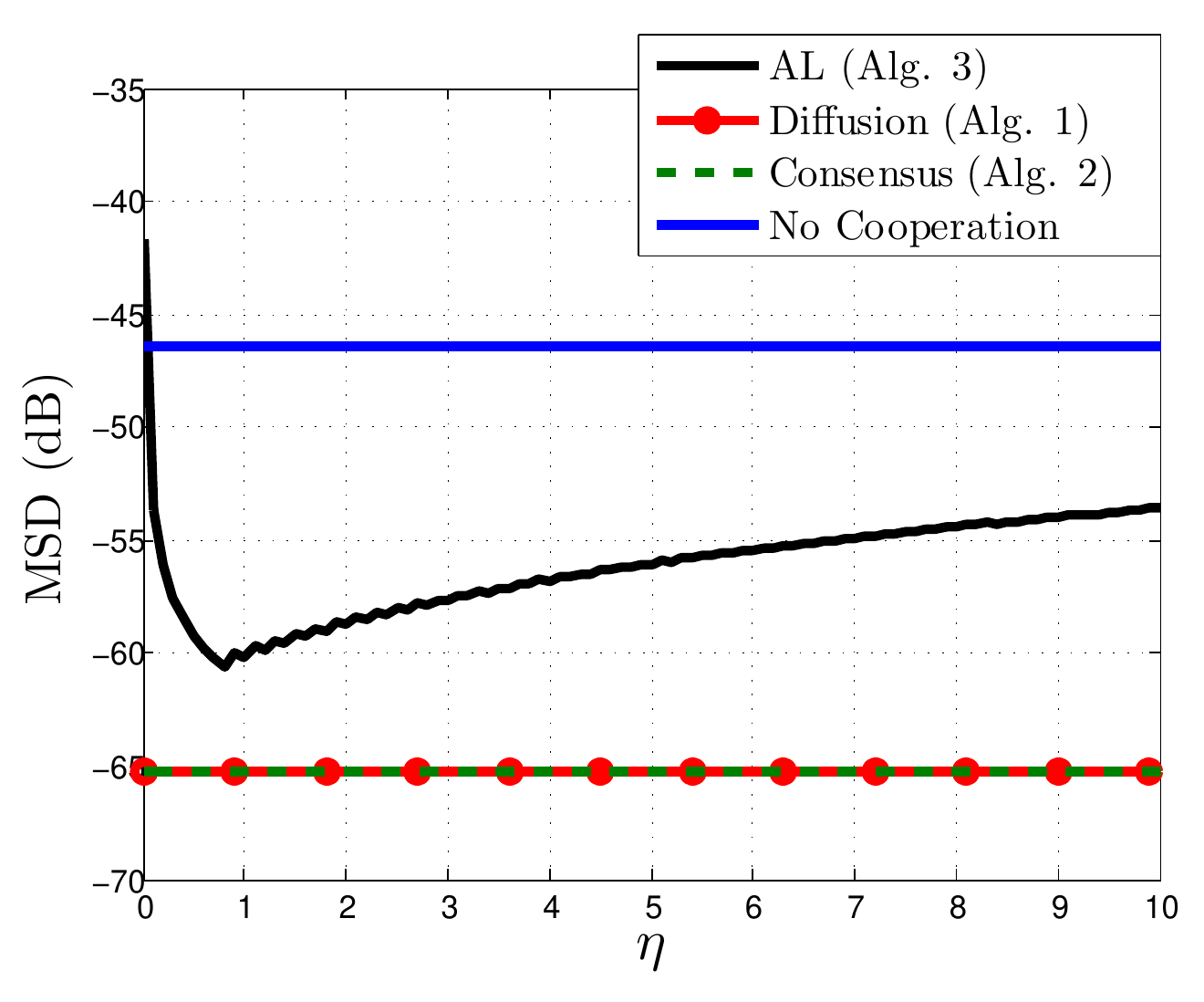}}}
\caption{(a) Performance of the various algorithms against the value of $\eta$, the augmented Lagrangian regularization parameter. $N=100$, $\mu = 0.005$, and $M=5$. All nodes have the same positive-definite matrix $R_u$; (b) Algorithm characteristic curves. Performance of the various algorithms measured in $\rho(\mathcal{B}')$ against the network MSD, curves closer to the bottom-left corner are better. The AL ($\eta = 10$) curve is shorter than the rest since it diverges as $\mu$ is increased to achieve faster convergence; (c) MSD vs. $\eta$ for fixed convergence rate of $0.99995$. Best viewed in color.}
\end{figure}

\section{Mean-Square-Error Performance}
\label{Sec:MSD}
From \eqref{eq:vec_weighted_expression}, we have in the limit that:
	\begin{align}
		\lim_{i\rightarrow\infty} \E \left\|\left[\begin{array}{c}
	\widetilde{\w}_i' \\ 
	\widetilde{\blambda}_{1,i}'
	\end{array}\right]\right\|^2_{(I - \mathcal{F}') \gamma'} &\approx \mu^2 (\mathrm{bvec}(R_h))^\T \gamma' \label{eq:weighted_MSD_expression}
	\end{align}
	Now, since we are interested in the network MSD defined by \eqref{eq:network_MSD}, we may introduce the auxiliary matrix:
	\begin{align}
		\Phi = \left[\begin{array}{c|c}
	I_{NM \times NM} & 0_{NM\times (N-1)M} \\ \hline
	0_{(N-1)M \times NM} & 0_{(N-1)M \times (N-1)M}
	\end{array} \right]
	\end{align}
	and select $\gamma' = (I-\mathcal{F}')^{-1} \mathrm{bvec}(\Phi)$ in \eqref{eq:weighted_MSD_expression} to obtain the following expression for the network MSD:
	 \begin{equation}
	\lim_{i\rightarrow\infty} \E \left\|\widetilde{\w}_i\right\|^2_{\frac{1}{N} I_{NM}} \!\!\approx\!\! \frac{\mu^2}{N} (\mathrm{bvec}(R_h))^\T \!(I \!-\! \mathcal{F}')^{-1} \mathrm{bvec} (\Phi) \label{eq:MSD_SS_approx_network_average}
\end{equation}

\begin{theorem}[MSD approximation for AL and AH algorithms]
	\label{thm:general_MSD_expression}
	Given that the matrix $\mathcal{H} + \eta \mathcal{L}$ is positive-definite from Lemma \ref{lem:pos_def_v'Hv}, then under under sufficiently small step-sizes, the network \emph{MSD} \eqref{eq:MSD_SS_approx_network_average} simplifies to:
	\begin{align}
	\mathrm{MSD} \approx \frac{\mu}{2 N} \mathrm{Tr}\left(\mathcal{R}_z (\mathcal{H} + \eta \mathcal{L})^{-1}\right) + O(\mu^2)\label{eq:most_general_MSD_expression}
	\end{align}
	where ${\cal R}_z$ is given by \eqref{eq:R_z}. 
\end{theorem}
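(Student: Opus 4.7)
The plan is to reduce \eqref{eq:MSD_SS_approx_network_average} to a continuous-time Lyapunov equation via a small-$\mu$ expansion, solve the Lyapunov equation by exploiting the saddle-point block structure of $\mathcal{R}'$, and then read off the trace against $R_h$. First, I would note that $\gamma' \triangleq (I-\mathcal{F}')^{-1}\mathrm{bvec}(\Phi)$ equals $\mathrm{bvec}(\Gamma')$ where $\Gamma'$ solves the discrete Lyapunov equation $\Gamma' - \mathcal{B}'^{\T}\Gamma'\mathcal{B}' = \Phi$. Substituting $\mathcal{B}' = I - \mu \mathcal{R}'$ and keeping only leading-order terms in $\mu$ reduces this to the continuous Lyapunov equation
\begin{align*}
\mathcal{R}'^{\T} X + X \mathcal{R}' = \Phi,
\end{align*}
with $\Gamma' = \mu^{-1} X + O(1)$. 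By Lemma \ref{lem:hurwitz}, together with Lemma \ref{lem:pos_def_v'Hv} guaranteeing that $A \triangleq \mathcal{V}^{\T}\mathcal{H}\mathcal{V}+\eta\sLambda$ is positive-definite, the matrix $-\mathcal{R}'$ is Hurwitz and this Lyapunov equation admits a unique symmetric solution $X$. Plugging $\Gamma' \approx X/\mu$ back into \eqref{eq:MSD_SS_approx_network_average} gives $\mathrm{MSD} \approx \frac{\mu}{N}\Tr(R_h X) + O(\mu^2)$.

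Next, I would solve the Lyapunov equation in block form adapted to
\begin{align*}
\mathcal{R}' = \begin{bmatrix} A & B^{\T} \\ -B & 0 \end{bmatrix},\quad B \triangleq [\,\mathcal{S}_2 \ \ 0_{(N-1)M\times M}\,],
\end{align*}
writing $X$ in the same partition as blocks $X_{11},X_{12},X_{22}$. The $(2,2)$ block equation forces $B X_{12}$ to be antisymmetric, while the $(1,2)$ block equation $A X_{12} + X_{11} B^{\T} - B^{\T}X_{22} = 0$ pins down $X_{22}$ through a subsidiary Sylvester-type relation in the positive-definite Schur complement $S = B A^{-1} B^{\T}$. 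Substituting these into the $(1,1)$ block equation $A X_{11} + X_{11} A = I_{NM} + B^{\T}X_{12}^{\T} + X_{12} B$ and using the elementary identity $A(\tfrac{1}{2}A^{-1}) + (\tfrac{1}{2}A^{-1})A = I_{NM}$ would identify the principal block as $X_{11} = \tfrac{1}{2}A^{-1}$, once the cross-block contributions $B^{\T}X_{12}^{\T} + X_{12}B$ are shown to vanish (or to be absorbed into higher-order corrections that do not affect the leading-order trace).

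Finally, because $R_h$ is supported only on its top-left $NM \times NM$ block, identified with $\mathcal{V}^{\T}\mathcal{R}_z \mathcal{V}$, the trace collapses to $\Tr(R_h X) = \Tr(\mathcal{V}^{\T}\mathcal{R}_z \mathcal{V}\cdot X_{11}) = \tfrac{1}{2}\Tr(\mathcal{V}^{\T}\mathcal{R}_z \mathcal{V}\cdot A^{-1})$. Applying the cyclic property of the trace together with the identity $A^{-1} = \mathcal{V}^{\T}(\mathcal{H}+\eta\mathcal{L})^{-1}\mathcal{V}$---valid because $\mathcal{V}$ is orthogonal and $\mathcal{L} = \mathcal{V}\sLambda\mathcal{V}^{\T}$ from \eqref{eq:script_L}---gives $\tfrac{1}{2}\Tr(\mathcal{R}_z(\mathcal{H}+\eta\mathcal{L})^{-1})$, and combining with the prefactor $\mu/N$ reproduces \eqref{eq:most_general_MSD_expression}. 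I anticipate that the main obstacle will be the Lyapunov solve: the non-symmetric saddle-point form of $\mathcal{R}'$ couples the three block equations so that the identification $X_{11} = \tfrac{1}{2}A^{-1}$ requires careful bookkeeping of the cross-block residuals $B^{\T}X_{12}^{\T} + X_{12}B$, and a secondary subtlety is justifying the small-$\mu$ truncation at the outset, which is legitimate because $\mathcal{F}'$ is an $O(\mu)$ perturbation of the identity and higher-order corrections to $\Gamma'$ produce only $O(\mu^2)$ corrections to the MSD, consistent with the slow-adaptation reasoning cited from \cite{SayedProcIEEE}.
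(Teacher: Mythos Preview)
Your reduction of \eqref{eq:MSD_SS_approx_network_average} to the continuous Lyapunov equation $\mathcal{R}'^{\T}X+X\mathcal{R}'=\Phi$, with $\Gamma'=\mu^{-1}X+O(1)$, is correct and is a natural way to extract the leading-order-in-$\mu$ MSD. The gap is in the block solve: the identification $X_{11}=\tfrac{1}{2}A^{-1}$ does \emph{not} follow. Once you have passed to the continuous Lyapunov equation there is no small parameter left; the cross-block contribution $B^{\T}X_{12}^{\T}+X_{12}B$ in the $(1,1)$ equation is a fixed quantity determined by the unique $X$, and it is generically nonzero. Concretely, take $N=2$, $M=1$, $\eta=0$, $R_{u,1}=1$, $R_{u,2}=2$, so that $A=\mathcal{V}^{\T}\mathcal{H}\mathcal{V}=\left[\begin{smallmatrix}3/2&-1/2\\-1/2&3/2\end{smallmatrix}\right]$ and $B=[\sqrt{2}\;\;0]$. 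Solving the $3\times3$ Lyapunov system yields $X_{11}=\left[\begin{smallmatrix}13/36&1/12\\1/12&13/36\end{smallmatrix}\right]\neq\tfrac{1}{2}A^{-1}=\left[\begin{smallmatrix}3/8&1/8\\1/8&3/8\end{smallmatrix}\right]$, and with $\mathcal{R}_z=\mathrm{diag}(1,6)$ one gets $\Tr(\mathcal{V}^{\T}\mathcal{R}_z\mathcal{V}\,X_{11})=\tfrac{19}{9}$ whereas $\tfrac{1}{2}\Tr(\mathcal{R}_z\mathcal{H}^{-1})=2$. So neither the block nor the trace matches, and the discrepancy cannot be pushed into an $O(\mu^2)$ remainder because it already lives at the $O(\mu)$ level of the MSD.

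The paper does not solve a Lyapunov equation. It expands $(I-\mathcal{F}')^{-1}$ as the Neumann series $\sum_{n\geq0}\mathcal{B}'^{n\T}\otimes_b\mathcal{B}'^{n\T}$, rewrites the MSD as $\sum_{n\geq0}\Tr\!\left(R_h\,\mathcal{B}'^{n\T}\Phi\,\mathcal{B}'^{n}\right)$, and then applies a termwise small-$\mu$ block approximation to $\mathcal{B}'^{n\T}$: the top-left block is replaced by $(I-\mu\mathcal{K})^{n}$ with $\mathcal{K}=\mathcal{V}^{\T}\mathcal{H}\mathcal{V}+\eta\sLambda$, the off-diagonal blocks by $\pm\mu n$ times $\mathcal{S}_2$-factors, and the bottom-right by $I$. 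After forming $\mathcal{B}'^{n\T}\Phi\,\mathcal{B}'^{n}$ and dropping the resulting higher-order-in-$\mu$ blocks, only the top-left block $(I-\mu\mathcal{K})^{2n}$ survives against $R_h$, and summing the geometric series gives $\Tr(\mathcal{V}^{\T}\mathcal{R}_z\mathcal{V}\,(2\mu\mathcal{K})^{-1})$, hence \eqref{eq:most_general_MSD_expression}. The simplification to $\tfrac{1}{2}A^{-1}$ is thus produced by this additional termwise approximation of the matrix powers, not by the exact leading-order Lyapunov balance you set up; if you want to follow your route you would need to introduce an analogous approximation (or argue separately why the exact $X_{11}$ and $\tfrac{1}{2}A^{-1}$ yield the same trace against $\mathcal{V}^{\T}\mathcal{R}_z\mathcal{V}$, which the counterexample above shows is not true in general).
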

\begin{proof}
	See Appendix \ref{app:proof_general_MSD_expression}.
\end{proof}
Observe that the positive-definiteness of the matrix ${\cal H}+\eta{\cal L}$ is guaranteed for the AH algorithm by assuming that $R_{u,k} > 0$ for all $k$. The following special cases follow from \eqref{eq:most_general_MSD_expression}.
\begin{corollary}[MSD performance of AH algorithm]
	\label{cor:zero_eta_approximation}
	Assuming each $R_{u,k}$ is positive-definite and the network is connected, the network \emph{MSD} \eqref{eq:most_general_MSD_expression} for the AH algorithm is given by:
	\begin{align}
		\mathrm{MSD} &\approx \mu  \frac{ M}{2}  \frac{1}{N}\sum_{k=1}^N \sigma_{v,k}^2  + O(\mu^2) \label{eq:MSD_AH}
	\end{align}
\end{corollary}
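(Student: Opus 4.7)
The plan is to specialize Theorem \ref{thm:general_MSD_expression} to the AH setting, which corresponds to $\eta = 0$. Under the hypothesis that every $R_{u,k}$ is positive-definite, the block-diagonal matrix $\mathcal{H} = \mathrm{blockdiag}\{R_{u,1},\ldots,R_{u,N}\}$ is itself positive-definite, so the matrix $\mathcal{H} + \eta\mathcal{L} = \mathcal{H}$ is invertible and the hypotheses of Theorem \ref{thm:general_MSD_expression} are satisfied. Setting $\eta = 0$ in \eqref{eq:most_general_MSD_expression} therefore gives
\begin{align*}
\mathrm{MSD} \approx \frac{\mu}{2N}\,\mathrm{Tr}\!\left(\mathcal{R}_z \mathcal{H}^{-1}\right) + O(\mu^2).
\end{align*}

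Next, I would exploit the block-diagonal structure of both factors. Recalling \eqref{eq:R_z}, we have $\mathcal{R}_z = \mathrm{blockdiag}\{\sigma_{v,k}^2 R_{u,k}\}$, and similarly $\mathcal{H}^{-1} = \mathrm{blockdiag}\{R_{u,k}^{-1}\}$. Their product is block-diagonal as well, with $k$-th block $\sigma_{v,k}^2 R_{u,k} R_{u,k}^{-1} = \sigma_{v,k}^2 I_M$. Taking the trace block by block yields $\mathrm{Tr}(\mathcal{R}_z \mathcal{H}^{-1}) = M \sum_{k=1}^N \sigma_{v,k}^2$, and substituting back produces exactly
\begin{align*}
\mathrm{MSD} \approx \frac{\mu M}{2N}\sum_{k=1}^N \sigma_{v,k}^2 + O(\mu^2),
\end{align*}
which is the claimed expression \eqref{eq:MSD_AH}.

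There is essentially no obstacle here: the work was already done in establishing Theorem \ref{thm:general_MSD_expression}, and this corollary is a direct algebraic specialization. The only small point worth verifying is that $\mathcal{H}$ is invertible in the $\eta = 0$ case, which is precisely why the stronger assumption ``each $R_{u,k}$ is positive-definite'' (as opposed to the partial-observation condition \eqref{eq:partial_observability}) is needed for the AH algorithm; under the weaker condition, $\mathcal{H}^{-1}$ need not exist and the formula would be meaningless. Thus the corollary is a two-line consequence of the preceding theorem.
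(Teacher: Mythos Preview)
Your proposal is correct and follows exactly the paper's own approach: the paper's proof simply states that the result follows by setting $\eta=0$ in \eqref{eq:most_general_MSD_expression}, and you have spelled out the block-diagonal trace computation that makes this explicit. Your remark about why the stronger positive-definiteness assumption on each $R_{u,k}$ is needed (to guarantee invertibility of $\mathcal{H}$ when $\eta=0$) is a nice clarification that the paper leaves implicit.
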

\begin{proof}
	The result follows by setting $\eta=0$ in \eqref{eq:most_general_MSD_expression}. 
\end{proof}
\noindent Expression \eqref{eq:MSD_AH} is actually equal to the average performance across a collection of $N$ non-cooperative agents (see, e.g., \cite{sayed2013diffusion,SayedProcIEEE}). In this way, Corollary \ref{cor:zero_eta_approximation} is a surprising result for the AH algorithm since even with cooperation, the AH network is not able to improve over the non-cooperative strategy where each agent acts independently of the other agents. This conclusion does not carry over to the AL algorithm, although the following is still not encouraging for AL strategies. 

\begin{corollary}[MSD performance of AL algorithm]
	\label{cor:general_large_eta_approximation}
	Assume that the matrix $\mathcal{H} + \eta \mathcal{L}$ is positive-definite. Then, for sufficiently small step-sizes, the network \emph{MSD} \eqref{eq:most_general_MSD_expression} for the AL algorithm for large $\eta$ simplifies to 
	\begin{align}
	\mathrm{MSD} &\approx  \frac{\mu}{2N} \Tr\left(\!\!\bigg(\sum_{k=1}^N R_{u,k}\bigg)^{\!\!-1} \!\!\bigg(\sum_{k=1}^N \sigma_{v,k}^2 R_{u,k}\bigg)\!\!\right) \!+ \nonumber\\
	&\quad\ \frac{\mu}{2N\eta} \Tr\left( \mathcal{R}_z \mathcal{L}^{\dagger}\right) \!+O\Big(\frac{\mu}{N^2\eta}\Big) \!+ O(\mu^2) \label{eq:large_eta_MSD_simple_terms}
	\end{align}	
	where $\mathcal{L}^\dagger$ denotes the pseudoinverse of $\mathcal{L}$.
\end{corollary}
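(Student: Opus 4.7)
The starting point is Theorem \ref{thm:general_MSD_expression}, which already tells us that
\begin{align}
\mathrm{MSD} \approx \frac{\mu}{2N}\,\Tr\!\big(\mathcal{R}_z(\mathcal{H}+\eta\mathcal{L})^{-1}\big) + O(\mu^2),
\end{align}
so the whole task reduces to expanding $(\mathcal{H}+\eta\mathcal{L})^{-1}$ for large $\eta$. The difficulty is that $\mathcal{L}$ is rank-deficient with null space spanned by $\mathcal{V}_0=\mathds{1}_N/\sqrt{N}\otimes I_M$, so one cannot simply Taylor-expand in $1/\eta$ around $\eta^{-1}\mathcal{L}^{-1}$. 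My plan is to change basis using the orthogonal matrix $\mathcal{V}=[\mathcal{V}_2\ \mathcal{V}_0]$ so that
\begin{align}
(\mathcal{H}+\eta\mathcal{L})^{-1} = \mathcal{V}\,\big(\mathcal{V}^\T(\mathcal{H}+\eta\mathcal{L})\mathcal{V}\big)^{-1}\mathcal{V}^\T,
\end{align}
where the transformed matrix has the $2\times 2$ block structure already displayed in \eqref{eq:y'Hy}. In this basis, only the top-left block $A\triangleq\mathcal{V}_2^\T\mathcal{H}\mathcal{V}_2+\eta\mathcal{S}_2^\T\mathcal{S}_2$ grows with $\eta$; the bottom-right block is $D=\frac{1}{N}\sum_{k=1}^N R_{u,k}$, which is fixed, and the off-diagonal blocks are $O(1)$ and carry an extra $1/\sqrt{N}$ factor inherited from $\mathcal{V}_0$.

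The next step is a standard block-matrix inversion (Schur complement about the invertible block $D$), combined with the Neumann expansion
\begin{align}
A^{-1} = \tfrac{1}{\eta}(\mathcal{S}_2^\T\mathcal{S}_2)^{-1} - \tfrac{1}{\eta^2}(\mathcal{S}_2^\T\mathcal{S}_2)^{-1}\mathcal{V}_2^\T\mathcal{H}\mathcal{V}_2(\mathcal{S}_2^\T\mathcal{S}_2)^{-1}+O(1/\eta^3).
\end{align}
This gives top-left block $\tfrac{1}{\eta}(\mathcal{S}_2^\T\mathcal{S}_2)^{-1}+O(1/\eta^2)$, bottom-right block $D^{-1}+O(1/\eta)$, and cross blocks of order $1/\eta$. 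I would then evaluate
\begin{align}
\Tr\!\big(\mathcal{R}_z(\mathcal{H}+\eta\mathcal{L})^{-1}\big) = \Tr\!\big((\mathcal{V}^\T\mathcal{R}_z\mathcal{V})\,(\mathcal{V}^\T(\mathcal{H}+\eta\mathcal{L})\mathcal{V})^{-1}\big)
\end{align}
block by block, exploiting $\mathcal{V}_0^\T\mathcal{R}_z\mathcal{V}_0=\frac{1}{N}\sum_k\sigma_{v,k}^2 R_{u,k}$. The bottom-right contribution produces the leading consensus/diffusion-like term
\begin{align}
\Tr\!\Big(\tfrac{1}{N}\textstyle\sum_k\sigma_{v,k}^2 R_{u,k}\cdot D^{-1}\Big) = \Tr\!\Big(\big(\textstyle\sum_k R_{u,k}\big)^{\!-1}\textstyle\sum_k\sigma_{v,k}^2 R_{u,k}\Big),
\end{align}
and the top-left contribution produces $\tfrac{1}{\eta}\Tr(\mathcal{V}_2^\T\mathcal{R}_z\mathcal{V}_2(\mathcal{S}_2^\T\mathcal{S}_2)^{-1})$, which equals $\tfrac{1}{\eta}\Tr(\mathcal{R}_z\mathcal{L}^\dagger)$ once one recognizes $\mathcal{L}^\dagger=\mathcal{V}_2(\mathcal{S}_2^\T\mathcal{S}_2)^{-1}\mathcal{V}_2^\T$. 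Multiplying both by the prefactor $\mu/(2N)$ reproduces the first two terms of \eqref{eq:large_eta_MSD_simple_terms}.

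The main obstacle will be bookkeeping the remainder, in particular the size of the cross-term contributions $\Tr(\mathcal{V}_2^\T\mathcal{R}_z\mathcal{V}_0\cdot(\cdot)_{21})+\Tr(\mathcal{V}_0^\T\mathcal{R}_z\mathcal{V}_2\cdot(\cdot)_{12})$. A naive bound gives only $O(\mu/(N\eta))$, yet the corollary claims the tighter $O(\mu/(N^2\eta))$. The extra factor of $1/N$ must be harvested by noting that \emph{both} cross blocks of the inverse carry a $\mathcal{V}_0$ factor and hence a $1/\sqrt{N}$ scaling; in the Schur-complement expansion the product $D^{-1}\,\mathcal{V}_0^\T\mathcal{H}\mathcal{V}_2(\mathcal{S}_2^\T\mathcal{S}_2)^{-1}/\eta$ picks up $1/\sqrt{N}$ from $\mathcal{V}_0^\T$, while the trace-pairing against $\mathcal{V}_2^\T\mathcal{R}_z\mathcal{V}_0$ supplies a second $1/\sqrt{N}$. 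Combined with the $\mu/(2N)$ prefactor, this yields the claimed $O(\mu/(N^2\eta))$. The remaining $O(\mu^2)$ term is simply inherited from Theorem \ref{thm:general_MSD_expression} and does not require additional work.
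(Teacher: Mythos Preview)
Your proposal is correct and follows essentially the same route as the paper: both start from Theorem~\ref{thm:general_MSD_expression}, pass to the $\mathcal{V}$-basis so that $\mathcal{V}^\T(\mathcal{H}+\eta\mathcal{L})\mathcal{V}$ takes the block form \eqref{eq:y'Hy}, apply the block-inverse/Schur-complement formula with $E=(A-BD^{-1}C)^{-1}\approx\tfrac{1}{\eta}\mathcal{D}_1^{-1}$ for large $\eta$, and then take the trace against $\mathcal{V}^\T\mathcal{R}_z\mathcal{V}$ block by block. Your write-up is in fact more explicit than the paper's in two places---the identification $\mathcal{L}^\dagger=\mathcal{V}_2(\mathcal{S}_2^\T\mathcal{S}_2)^{-1}\mathcal{V}_2^\T$ and the $1/\sqrt{N}$ bookkeeping for the cross terms that yields the $O(\mu/(N^2\eta))$ remainder---both of which the paper leaves implicit.
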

\begin{proof}
	See Appendix \ref{app:proof_general_large_eta_approximation}.
\end{proof}
By examining \eqref{eq:large_eta_MSD_simple_terms}, we learn that the performance of the AL algorithm for large $\eta$ approaches the performance of the  the diffusion strategy given by \eqref{eq:diff_cons_MSD}. However, recalling the fact that the step-size range required for convergence, under the large $\eta$ regime and the assumption that $\mathcal{H} = I_N \otimes R_u$ in \eqref{eq:convergence_condition_large_eta}, is inversely proportional to $\eta$, we conclude that the AL algorithm can only approach the performance of the diffusion strategy as $\mu \rightarrow 0$ and $\eta \rightarrow \infty$. In addition, the performance of the AL algorithm depends explicitly on the network topology through the Laplacian matrix $\mathcal{L}$. Observe that this is not the case in \eqref{eq:diff_cons_MSD} for the primal strategies. For this reason, even for large $\eta$, the AL algorithm is not robust to the topology. 

In order to illustrate these results, we consider a connected network with $N=100$ agents and set $\mu = 1\times 10^{-4}$ and $M=5$. First, we illustrate the network MSD \eqref{eq:most_general_MSD_expression} as a function of $\eta$ when $\mathcal{H} = I_N \otimes R_u$ and $R_u$ is positive-definite. We observe from Fig.~\ref{fig:eta_tradeoff} that the MSD performance of the AH algorithm is identical to that of the non-cooperative solution. In addition, the AL algorithm only approaches the same steady-state performance as the diffusion strategy asymptotically as $\eta \rightarrow \infty$. Furthermore, we graph the characteristic curves for various strategies in Fig.~\ref{fig:characteristic}. In this figure, we plot the convergence rate against the MSD \eqref{eq:most_general_MSD_expression}. Clearly, curves closer to the bottom-left corner indicate better performance since in that case an algorithm converges quicker and has better steady-state performance. We notice that the AH algorithm is \emph{outperformed} by non-cooperation. In addition, as $\eta$ increases, we see that the AL algorithm can approach the performance of the diffusion strategy for very small $\mu$ (slow convergence rate---bottom-right part of the plot), but not anywhere else. We also see the effect of \eqref{eq:convergence_condition_large_eta} where the AL algorithm with $\eta = 10$ is less stable than the other distributed algorithms. In Fig.~\ref{fig:eta_tradeoff_fixed_convergence_rate}, we obtain the steady-state MSD values as well as convergence rates for the primal algorithms and non-cooperative algorithm with different step-sizes. At the same time, we obtain the steady-state MSD performance and convergence rate for the AL algorithm with $\eta$ varying from zero to $10$ (inclusive). We then sort the convergence rates and plot the steady-state MSD performance of all algorithms for the convergence rate of $0.99995$. Observe also that the AH algorithm (obtained by setting $\eta = 0$ in the AL algorithm) achieves a \emph{worse} steady-state MSD than the non-cooperative algorithm when the convergence rate is held constant. We note that for this particular choice of covariance matrices, noise variances, and network, the optimal choice for $\eta$ is $\eta = 0.8$.  

It should be noted that the convergence rate of the AH and AL algorithms depends on the network topology through the Laplacian matrix (see Example \ref{ex:diverging_example}). In contast, it was shown in \cite{jianshu_part1} that the \emph{initial} convergence rate of the diffusion and consensus schemes also depends on the network topology. That is, the convergence of the primal schemes exhibits two phases 1) An initial phase that is dependent on the network topology and 2) A second, slower, phase that is \emph{independent} of the network topology. It was further shown that the initial phase is faster than the second phase and thus the convergence rate of the primal strategies is largely independent of the network topology. This is not the case for the AH and AL algorithms. Figure~\ref{fig:characteristic} plots the characteristic curves for the primal algorithms relative to the \emph{slower} (second) convergence rate.

\section{Improving the MSD Performance}

In this section, we set the step-size, $\mu$, as a function of the regularization parameter $\eta$. More specifically, we choose:
\begin{align}
	\eta = \mu^{-1/\theta},\quad\quad\quad [ \theta > 1 ] 
\end{align}
The main difficulty in the stability analysis in this case is that the step-size moves simultaneously with $\eta$. Observe that in our previous results, we found that there exists some step-size range for which the algorithm is stable when $\eta$ is fixed at a large value, and that this step-size bound depends on $\eta$. Unfortunately, we do not know in general how the upper-bound \eqref{eq:mu_bar} depends on $\eta$, and so it is not clear if the algorithm can still be guaranteed to converge when $\mu = \eta^{-\theta}$. If, however, we assume that $\mathcal{H} = I_N \otimes R_u$, then we can obtain the eigenvalues of the matrix $\mathcal{R}'$ using Theorem \ref{thm:eigenvalues_minusR2}, which then allows us to continue to guarantee convergence for large $\eta$.

\begin{theorem}
\label{thm:convergence_fixed_AL}
	Let $\mu = \eta^{-\theta}$ and let $\theta > 1$. Furthermore, let $\mathcal{H} = I_N \otimes R_u$, with positive-definite $R_u$. Then, there exists some positive $\underline{\eta}$ such that for all $\eta > \underline{\eta}$, the matrix $\mathcal{B}'$ is stable; i.e., $\rho(\mathcal{B}') < 1$.
\end{theorem}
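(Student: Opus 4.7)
The plan is to leverage Theorem \ref{thm:eigenvalues_minusR2}, which gives a closed-form expression for the eigenvalues of $\mathcal{R}'$ under the assumption $\mathcal{H}=I_N\otimes R_u$, together with Lemma \ref{lem:discrete-time-stability} which characterizes when $\mathcal{B}'=I-\mu\mathcal{R}'$ is stable. Concretely, since $-\mathcal{R}'$ is Hurwitz (Corollary \ref{cor:generalTopology_mean_stability}), stability of $\mathcal{B}'$ reduces to checking that $\mu<2\mathfrak{Re}\{\lambda_j(\mathcal{R}')\}/|\lambda_j(\mathcal{R}')|^2$ for every eigenvalue $\lambda_j(\mathcal{R}')\in\sigma\cup\tau$ as given in \eqref{eq:eigenvalues_sigma}--\eqref{eq:eigenvalues_tau}. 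The key is to show that with the coupled choice $\mu=\eta^{-\theta}$, $\theta>1$, each of these bounds is satisfied once $\eta$ is large enough.

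The eigenvalues in $\sigma=\{\lambda_\ell(R_u)\}$ are real, positive, and independent of $\eta$. The corresponding stability bound is $\mu<2/\lambda_\ell(R_u)$, which is trivially met since $\mu=\eta^{-\theta}\to 0$ as $\eta\to\infty$. The interesting work is with $\tau$. Writing $\alpha_\ell=\lambda_\ell(R_u)$ and $\beta_k=\eta\lambda_k(L)>0$ (recall $k$ indexes the nonzero Laplacian eigenvalues), I would first observe that for $\eta$ larger than $(2\sqrt{\lambda_k(L)}-\alpha_\ell)/\lambda_k(L)$ the discriminant $(\alpha_\ell+\beta_k)^2-4\lambda_k(L)$ is positive, so all $\tau$ eigenvalues become real and positive. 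A first-order asymptotic expansion in $1/\eta$ then yields
\begin{align}
\lambda_+ &= (\alpha_\ell+\beta_k)-\frac{\lambda_k(L)}{\alpha_\ell+\beta_k}+O(\eta^{-3}),\\
\lambda_- &= \frac{\lambda_k(L)}{\alpha_\ell+\beta_k}+O(\eta^{-3}).
\end{align}
Hence $\lambda_+=\Theta(\eta)$ (binding large eigenvalue) while $\lambda_-=\Theta(\eta^{-1})$ (imposes no constraint since $2/\lambda_-\to\infty$).

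The binding stability condition is therefore $\mu<2/\lambda_+$, which for large $\eta$ is essentially $\mu<2/(\eta\lambda_1(L))$. Substituting $\mu=\eta^{-\theta}$ gives $\eta^{-\theta}<2/(\eta\lambda_1(L))$, i.e.\ $\eta^{\theta-1}>\lambda_1(L)/2$. Since $\theta>1$, the left-hand side tends to infinity and the inequality is satisfied for all $\eta$ larger than some $\underline{\eta}$. Taking $\underline{\eta}$ to be the maximum of this threshold, the threshold at which the $\tau$ discriminants become nonnegative, and the threshold needed for the $\sigma$-bound $\eta^{-\theta}<2/\lambda_{\max}(R_u)$ yields a single $\underline{\eta}$ that works simultaneously for all eigenvalues.

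The main obstacle is handling the joint scaling cleanly: unlike Theorem \ref{thm:generalTopology_mean_stability_partial_observability}, where $\eta$ was fixed and $\mu$ small, here both parameters move together, so one must track how the dominant eigenvalues of $\mathcal{R}'$ grow with $\eta$ and verify that they do not grow faster than $\mu^{-1}=\eta^\theta$. The asymptotic expansion above is precisely what shows the growth rate is $\eta$, giving a safety margin $\eta^{\theta-1}$, which is exactly why the assumption $\theta>1$ is needed (and why $\theta=1$ would be borderline). A secondary subtlety is that the expansion is valid only for large $\eta$, so I would keep track of the $\eta$-thresholds from each step and take their maximum at the end to define $\underline{\eta}$.
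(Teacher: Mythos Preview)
Your proposal is correct and follows essentially the same route as the paper: invoke the closed-form eigenvalues of $\mathcal{R}'$ from Theorem~\ref{thm:eigenvalues_minusR2} and check that the coupled choice $\mu=\eta^{-\theta}$ remains below the bound \eqref{eq:mu_bar} from Lemma~\ref{lem:discrete-time-stability} for all sufficiently large $\eta$. The paper's proof is a one-line pointer to exactly this verification; your asymptotic expansion of $\lambda_\pm$ and the resulting binding inequality $\eta^{\theta-1}>\lambda_1(L)/2$ simply make explicit the details the paper leaves to the reader.
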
	
\begin{proof}
	The result follows by using \eqref{eq:eigenvalues_sigma}--\eqref{eq:eigenvalues_tau} to verify that $\eta^{-\theta}$ continues to be upper bounded by \eqref{eq:mu_bar} for large $\eta$.
\end{proof}

\begin{theorem}
		Let $\mu = \eta^{-\theta}$, where $\theta > 1$. Assuming the algorithm converges (guaranteed by Theorem~\ref{thm:convergence_fixed_AL}), then the \emph{MSD} of the modified AL algorithm is approximated by:
		\begin{align}
		\mathrm{MSD} 
		&= \frac{\mu}{2N} \Tr\!\!\left(\!\!\left(\sum_{k=1}^N R_{u,k}\right)^{\!\!\!-1} \!\!\!\!\left(\sum_{k=1}^N \sigma_{v,k}^2 R_{u,k}\right)\!\!\right) \!+\! O\left(\!\mu^{1+1/\theta}\!\right)
		\end{align}
		for large $\eta$, or small $\mu$.
\end{theorem}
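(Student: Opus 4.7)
The plan is to invoke Corollary \ref{cor:general_large_eta_approximation} as the starting point and then simply substitute the coupling $\eta = \mu^{-1/\theta}$ into its expansion. The heavy lifting (diagonalizing $\mathcal{H}+\eta\mathcal{L}$, expanding $(I-\mathcal{F}')^{-1}$, and isolating the leading diffusion-like term) has already been carried out in Theorem~\ref{thm:general_MSD_expression} and its large-$\eta$ specialization in Corollary~\ref{cor:general_large_eta_approximation}. The remaining work is essentially bookkeeping on the size of the residuals once $\mu$ and $\eta$ are tied together.

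First I would check that the hypotheses of Corollary~\ref{cor:general_large_eta_approximation} hold along the curve $(\mu,\eta)=(\eta^{-\theta},\eta)$ for all sufficiently large $\eta$. Positive-definiteness of $\mathcal{H}+\eta\mathcal{L}$ is Lemma~\ref{lem:pos_def_v'Hv} applied to the connected network, and the ``sufficiently small step-sizes'' requirement is automatic since $\theta>1$ forces $\mu=\eta^{-\theta}\to 0$ as $\eta\to\infty$. In the specialization $\mathcal{H}=I_N\otimes R_u$ covered by Theorem~\ref{thm:convergence_fixed_AL}, mean and mean-square stability are moreover certified, so the error recursion is genuinely converging to the steady state whose norm we are computing.

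Second, I would substitute $\eta=\mu^{-1/\theta}$ directly into
\begin{align*}
\mathrm{MSD} &\approx \frac{\mu}{2N}\Tr\!\!\left(\!\bigg(\!\sum_{k=1}^N R_{u,k}\bigg)^{\!-1}\!\!\bigg(\!\sum_{k=1}^N\sigma_{v,k}^2 R_{u,k}\bigg)\!\right)\\
&\quad + \frac{\mu}{2N\eta}\Tr(\mathcal{R}_z \mathcal{L}^\dagger) + O\!\Big(\frac{\mu}{N^2\eta}\Big) + O(\mu^2),
\end{align*}
and track each correction. The first correction transforms as $\frac{\mu}{2N\eta}\Tr(\mathcal{R}_z \mathcal{L}^\dagger) = \frac{\mu^{1+1/\theta}}{2N}\Tr(\mathcal{R}_z \mathcal{L}^\dagger) = O(\mu^{1+1/\theta})$; the second residual $O(\mu/(N^2\eta))$ similarly collapses to $O(\mu^{1+1/\theta}/N^2)$ and is absorbed. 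For the last term I would use the elementary observation that $\theta>1$ yields $1+\tfrac{1}{\theta}<2$, so $\mu^{2}=o(\mu^{1+1/\theta})$ as $\mu\to 0$, hence $O(\mu^{2})$ is dominated by $O(\mu^{1+1/\theta})$. Combining the three residuals gives a single $O(\mu^{1+1/\theta})$ term, which is exactly what the statement claims.

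The only point that deserves real care — and the one I would consider the main obstacle — is ensuring that the two asymptotic regimes used in Corollary~\ref{cor:general_large_eta_approximation} (large $\eta$ for the Neumann-type expansion of $(\mathcal{H}+\eta\mathcal{L})^{-1}$, and small $\mu$ for dropping the $O(\mu^2)$ remainder from the mean-square recursion) are simultaneously activated and quantitatively compatible along the single curve $\mu=\eta^{-\theta}$. Because $\theta>1$, the rates are consistent: $\mu\to 0$ strictly faster than $1/\eta$, so the inverse-$\eta$ expansion remains the dominant source of error and the proof closes. I would finish by noting that the phrase ``for large $\eta$, or small $\mu$'' in the theorem statement is precisely the regime in which the hidden constants in the $O(\cdot)$ terms of Corollary~\ref{cor:general_large_eta_approximation} are well-behaved, and no new estimates beyond those already established are required.
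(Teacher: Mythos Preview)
Your proposal is correct and follows exactly the approach the paper takes: the paper's entire proof is the single line ``Substitute $\eta = \mu^{-1/\theta}$ into the results of Corollary~\ref{cor:general_large_eta_approximation},'' and your bookkeeping on the three residual terms (including the observation that $\theta>1$ gives $1+1/\theta<2$ so $O(\mu^2)$ is absorbed into $O(\mu^{1+1/\theta})$) simply spells out that substitution in detail.
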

\begin{proof}
		Substitute $\eta = \mu^{-\nicefrac{1}{\theta}}$ into the results of Corollary \ref{cor:general_large_eta_approximation}.
\end{proof}
\noindent The drawback remains that the AL algorithm is less stable than primal-optimization techniques, such as the diffusion strategy.

\section{Numerical Results}
Consider a network of $N=20$ agents and $M=5$. We generate a positive-definite matrix $R_u > 0$ with eigenvalues $1 + \x_m$, where $\x_m$ is a uniform random variable. We let $\mathcal{H} = I_N \otimes R_u$ with $\mu = 0.01$. This value allows all algorithms to converge. The diffusion and consensus strategies utilize a doubly-stochastic matrix generated through the Metropolis rule \cite{sayed2012diffbookchapter}. We note that the diffusion and consensus algorithms can improve their MSD performance by designing the combination matrix based on the Hastings rule \cite{zhao2012performance,boyd2004fastest}, but we assume that the nodes are noise-variance agnostic. In Fig.~\ref{fig:simulation1}, we simulate Algorithms~\ref{alg:Diffusion}--\ref{alg:AL}, and for Algorithm \ref{alg:AL}, we simulate three  values of $\eta$: $0$, $0.2$, and $2$. This will allow us to validate our analysis results where an increase in $\eta$ yields improvement in the MSD (see Corollary \ref{cor:general_large_eta_approximation}). The theoretical curves are generated using \eqref{eq:most_general_MSD_expression}. We observe that as $\eta$ is increased, the performance of the AL algorithm improves, but is still worse than that of the consensus algorithm \eqref{eq:consensus_1}--\eqref{eq:consensus_2} and the diffusion strategy \eqref{eq:diff_A}--\eqref{eq:diff_C}. Furthermore, as indicated by Fig.~\ref{fig:characteristic}, the convergence rate of the AH algorithm is worse than that of non-cooperation, even though both algorithms achieve the same MSD performance. This was observed earlier in Fig.~\ref{fig:example_divergence}. It is possible to further increase $\eta$ in order to make the performance of the AL algorithm match better with that of the consensus and diffusion strategies. However, it is important to note that if $\eta$ is increased too much, the algorithm will diverge (recall \eqref{eq:convergence_condition_large_eta}). For this reason, it is necessary to find a compromise between finding a large enough $\eta$ that the network MSD would be small, and a small enough $\eta$ to enhance \eqref{eq:convergence_condition_large_eta}.

\begin{figure}[!th]
	\centering
	\includegraphics[width=0.43\textwidth]{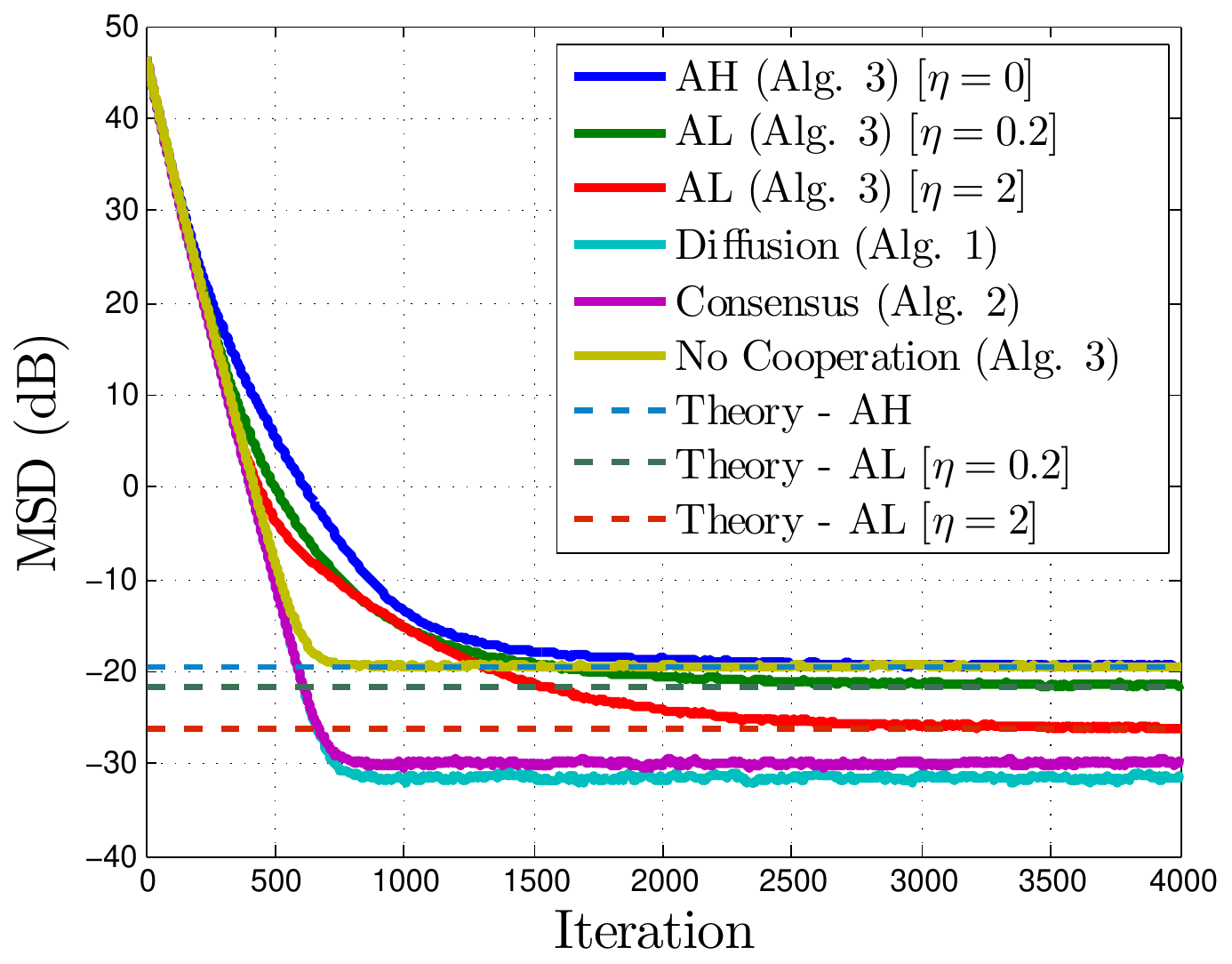}
	\caption{Simulation result for Algs.~\ref{alg:Diffusion}--\ref{alg:AL}. Best viewed in color.}
	\label{fig:simulation1}
\end{figure}


\section{Conclusion}

In this work, we examined the performance of primal-dual methods in a stochastic setting. In particular, we analyzed the performance of a first-order AH algorithm and a first-order AL algorithm. We discovered that the performance of the AH algorithm matches that of a non-cooperative solution and has stability limitations. We also showed that the AL algorithm can asymptotically match the MSD performance of the diffusion algorithm when the regularization parameter is increased. Unfortunately, as $\eta$ is increased, we saw that the permissible step-size range for the algorithm to converge shrinks. We provided a ``fix'' for the AL algorithm where we link the step-size to the regularization parameter $\eta$. With this modification, we show that the performance of the AL method matches that of the diffusion and consensus strategies up to first order in  the step-size. Unfortunately, this change does not remedy the fact that the step-size range for stability is still more restrictive than that of other distributed methods.


%

\appendices

\section{The AH Strategy under Partial Observation}
\label{app:AH_partial_observability}
In this appendix we provide an example to illustrate that the AH strategy can become unstable under the partial observation setting when some of the individual covariance matrices are singular. Thus, consider a fully-connected three node network with the incidence matrix
\begin{align}
	C = \left[\begin{array}{rrr}
	1 & -1 & 0 \\ 
	1 & 0 & -1 \\ 
	0 & 1 & -1
	\end{array} \right]
\end{align}
\noindent Furthermore, let $R_{u,1} = \mathrm{diag}\{1,0,0\}$, $R_{u,2} = \mathrm{diag}\{0,1,0\}$, and $R_{u,3} = \mathrm{diag}\{0,0,1\}$. Observe that the covariance matrices satisfy the condition $R_{u,1} + R_{u,2} + R_{u,3} > 0$, even though each $R_{u,k}$ is singular. Now, computing the SVD of the incidence matrix $\mathcal{C}$, we obtain
\begin{align}
	S_2 = \left[\begin{array}{rr}
	\sqrt{3} & 0\\ 
	0 & \sqrt{3}
	\end{array} \right] ,\quad V = \left[\begin{array}{rrr}
	-\frac{1}{\sqrt{2}} & -\frac{1}{\sqrt{6}} & \frac{1}{\sqrt{3}} \\ 
	0 & \sqrt{\frac{2}{3}} & \frac{1}{\sqrt{3}} \\ 
	\frac{1}{\sqrt{2}} & -\frac{1}{\sqrt{6}} & \frac{1}{\sqrt{3}}
	\end{array} \right]
\end{align}
Then,
\begin{align}	
-\mathcal{R}' &= -\left[\begin{array}{ccc}
\mathcal{V}_2^\T \mathcal{H} \mathcal{V}_2 & \mathcal{V}_2^\T \mathcal{H} \mathcal{V}_0 & \mathcal{S}_2^\T \\ 
\mathcal{V}_0^\T \mathcal{H} \mathcal{V}_2 & \displaystyle \frac{1}{3} \sum_{k=1}^3 R_{u,k} & 0_{3 \times 6}\\
-\mathcal{S}_2 & 0_{6 \times 3} & 0_{6}
\end{array} \right]
\end{align}
where $\mathcal{H} = \mathrm{diag}\{R_{u,1},R_{u,2},R_{u,3}\}$. It is straightforward to verify that the spectrum of $-\mathcal{R}'$ contains a purely imaginary eigenvalue at $j\sqrt{3}$. For example, if we let $v = \frac{1}{\sqrt{2}} [0, j, 0, 0, 0, 0, 0, 0, 0, 0, 1, 0, 0, 0, 0]^\T$, then $-\mathcal{R}' v = j\sqrt{3} v$, 
which implies that $-\mathcal{R}'$ is \emph{not} Hurwitz. Therefore, we cannot find a positive range for the step-size $\mu$ to guarantee convergence of the AH algorithm according to Lemma \ref{lem:discrete-time-stability} even though the AL algorithm can be guaranteed to converge for large $\eta$ (see Theorem \ref{thm:generalTopology_mean_stability_partial_observability}) and the diffusion and consensus strategies can also be shown to converge in this setting \cite{NOW_ML,SayedProcIEEE}. In Figure \ref{fig:simulation_partial_observation}, we simulate the described scenario with $\mu = 0.02$ and $\sigma_{v,k}^2 = 0.01$ for all $k=1,2,3$. We see that while the AH algorithm oscillates, the other algorithms converge.
\begin{figure}
\centering
\includegraphics[width=0.43\textwidth]{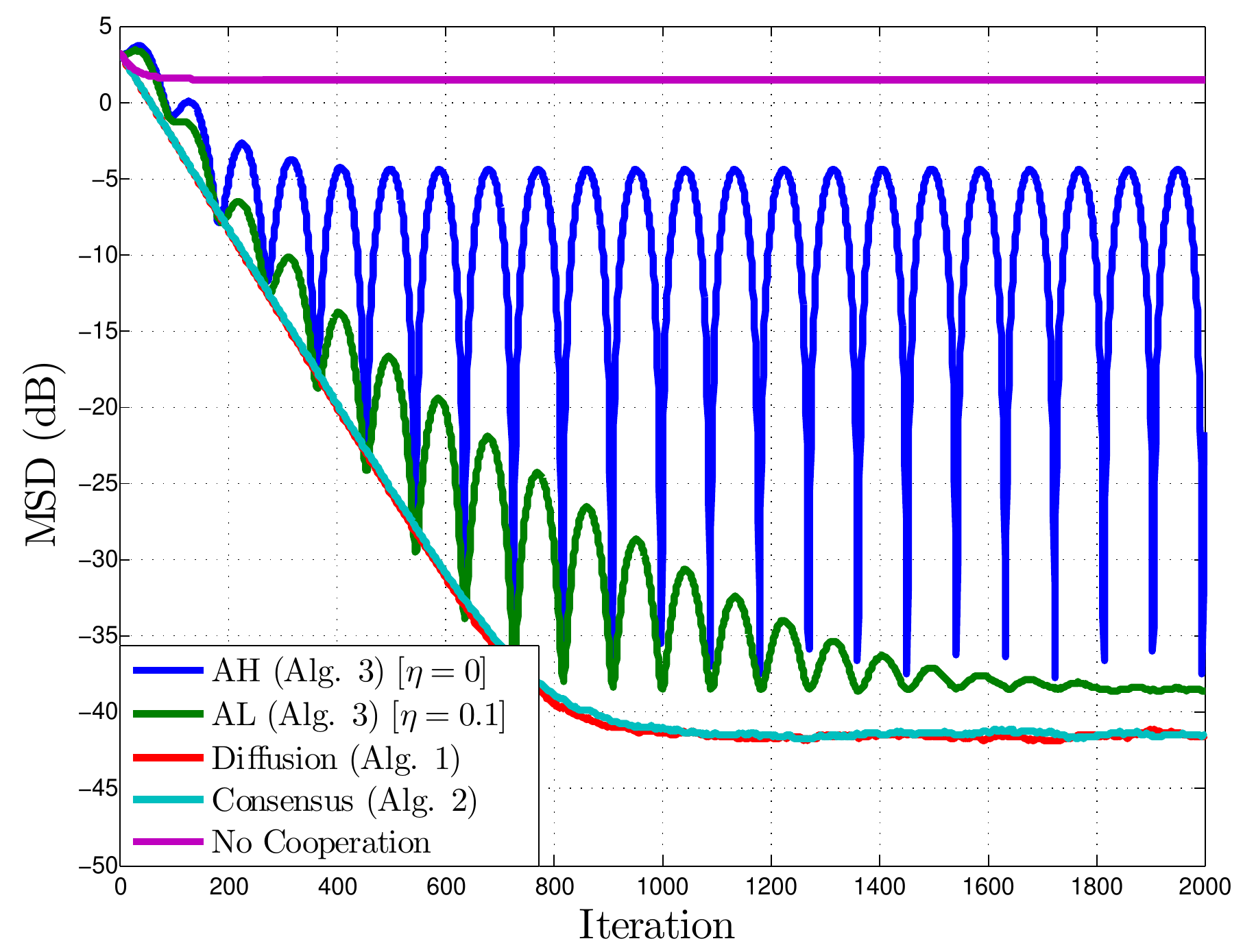}
\caption{Simulation result for Algs. \ref{alg:Diffusion}--\ref{alg:AL} for the partial observation scenario described in App.~\ref{app:AH_partial_observability}. The curves are averaged over $1000$ experiments.}
\label{fig:simulation_partial_observation}
\end{figure}
This result is surprising since the network is \emph{fully connected} and yet the AH algorithm cannot estimate the desired parameter vector. Indeed, for this example, AH algorithm will not converge for any $\mu > 0$. The curves are averaged over $1000$ experiments.

\section{Proof of Theorem \ref{thm:general_MSD_expression}}
\label{app:proof_general_MSD_expression}
We refer to expression \eqref{eq:MSD_SS_approx_network_average}. We know that the matrix $I - \mathcal{F}'$ is stable when the step-size is small by Theorem~\ref{thm:generalTopology_mean_square_stability_partial_observability}. Hence, 
\begin{equation}
	(I \!-\! \mathcal{F}')^{-1} \!=\! I\!+\!{\cal F}'\!+\!({\cal F}')^2+\ldots \!=\! \sum_{n=0}^\infty \!\mathcal{B}'^{n \T} \otimes_b \mathcal{B}'^{n \T}
\end{equation}
Then, using properties of block Kronecker products \cite{NOW_ML}:
\begin{equation}
	\left(\mathrm{bvec}(R_h)\!\right)^\T\!\!(I \!-\! \mathcal{F}')^{-1} \mathrm{bvec}\!\left(\Phi\right)\! =\! \sum_{n=0}^\infty\!\Tr\!\left(\!R_h \mathcal{B}'^{n\T} \Phi \mathcal{B}'^{n}\!\right)
\end{equation}
Now, observe that for small step-sizes we have, using \eqref{eq:R'_mean_stability},
\begin{align}
	\mathcal{B}'^{n \T} \!&= \!(I\!-\!\mu \mathcal{R}'^{\T})^n \!=\!\! \left[\!\!\begin{array}{cc|c} \multicolumn{2}{c|}{\multirow{2}{*}{$I-\mu \mathcal{K}$}} & \mu \mathcal{S}_2^\T\\
													  \multicolumn{2}{c|}{}  & 0_{M\!\times\!(N-1)M}\\\hline
													   -\mu \mathcal{S}_2 & 0_{(N-1)M\!\times\!M} & I_{(N-1)M}  \end{array}\!\!\right]^{\!n} \nonumber\\
						&\approx \left[\!\!\begin{array}{cc|c} \multicolumn{2}{c|}{\multirow{2}{*}{$(I-\mu \mathcal{K})^n$}} & \mu n \mathcal{S}_2^\T\\
													  \multicolumn{2}{c|}{}  & 0_{M\!\times\!(N-1)M}\\\hline
													   -\mu n \mathcal{S}_2 & 0_{(N-1)M\!\times\!M} & I_{(N-1)M}  \end{array}\!\!\right]
\end{align}
where $\mathcal{K} \triangleq \mathcal{V}^\T \mathcal{H} \mathcal{V} + \eta \sLambda$ and therefore, ignoring higher-order powers of the small step-size parameter:
\begin{align}
	 &\mathcal{B}'^{n\T} \Phi \mathcal{B}'^{n} \!\approx\!  \left[\!\!\begin{array}{cc|c} \multicolumn{2}{c|}{\multirow{2}{*}{$(I-\mu \mathcal{K})^n$}} & \mu n \mathcal{S}_2^\T\\
													  \multicolumn{2}{c|}{}  & 0\\\hline
													   -\mu n \mathcal{S}_2 & 0 & I_{(N-1)M}  \end{array}\!\!\right] \!\!\left[\begin{array}{cc}
						I & 0 \\ 
						0 & 0
						\end{array} \right] \times\nonumber\\
						&\left[\!\!\begin{array}{cc|c} \multicolumn{2}{c|}{\multirow{2}{*}{$(I\!-\!\mu \mathcal{K})^n$}} & -\mu n \mathcal{S}_2^\T\\
													  \multicolumn{2}{c|}{}  & 0\\\hline
													   \mu n \mathcal{S}_2 & 0 & I_{(N-1)M}  \end{array}\!\!\right] 
	\!\!\approx\!\!  \left[\!\!\begin{array}{cc|c} \multicolumn{2}{c|}{\multirow{2}{*}{$(I\!-\!\mu \mathcal{K})^{2n}$}} & -\mu n \mathcal{S}_2^\T\\
													  \multicolumn{2}{c|}{}  & 0\\\hline
													   -\mu n \mathcal{S}_2 & 0 & 0  \end{array}\!\!\right]
	\end{align}
Collecting these results we get, since $\mathcal{K} > 0$,
\begin{align}
	&(\mathrm{bvec}(R_h))^\T (I\!-\!\mathcal{F}')^{-1} \mathrm{bvec}(\Gamma) \!\approx\! \sum_{n=0}^\infty \Tr(\mathcal{V}^\T \!\mathcal{R}_z\! \mathcal{V} (I \!-\! \mu \mathcal{K})^{2n})\nonumber\\
	&\approx \sum_{n=0}^\infty \!\Tr(\mathcal{V}^\T \mathcal{R}_z \mathcal{V} (I \!-\! 2\mu \mathcal{K})^{n}) \!=\! \Tr(\mathcal{V}^\T \!\mathcal{R}_z \!\mathcal{V} (2\mu \mathcal{K})^{-1})
\end{align}
\vspace*{-1\baselineskip}
\section{Proof of Corollary \ref{cor:general_large_eta_approximation}}
\label{app:proof_general_large_eta_approximation}
\noindent Using \eqref{eq:y'Hy} we have
\begin{align}
	{\cal V}^{\sf T} \left({\cal H}+\eta{\cal L}\right) {\cal V} &= \left[\!\!\begin{array}{cc}
	\mathcal{V}_2^\T \mathcal{H} \mathcal{V}_2 + \eta \mathcal{D}_1& \mathcal{V}_2^\T \mathcal{H} \mathcal{V}_0 \\ 
	\mathcal{V}_0^\T \mathcal{H} \mathcal{V}_2 & \displaystyle \frac{1}{N} \sum_{k=1}^N R_{u,k}
	\end{array} \!\!\right] \label{eq:matrix_form_V'HV+e*S_1'S_1}
\end{align}
where $\sLambda_1 = \mathcal{S}_2^\T \mathcal{S}_2 = D_1 \otimes I_M$. The invertibility of the above matrix is guaranteed by a large enough $\eta$ (see Lemma \ref{lem:pos_def_v'Hv}). We may now use the block matrix inversion formula:
\begin{align}
	\left[\!\!\!\begin{array}{cc}
	A & B \\ 
	C & D
	\end{array} \!\!\!\right]^{-1} \!=\! \left[\!\!\!\begin{array}{cc}
	E  & -E B D^{-1} \\ 
	-D^{-1} C E & D^{-1} + D^{-1} C E B D^{-1}
	\end{array} \!\!\!\right] \label{eq:block_matrix_inverse}
\end{align}
where $E = (A - B D^{-1} C)^{-1} \approx \frac{1}{\eta} \mathcal{D}_1^{-1}$ for large $\eta$. Defining 
\begin{align}
	\bar{R}_u \triangleq \frac{1}{N} \sum_{k=1}^N R_{u,k},\quad\quad \bar{R}_z \triangleq \frac{1}{N} \sum_{k=1}^N R_{z,k}
\end{align}
and applying \eqref{eq:block_matrix_inverse} to \eqref{eq:matrix_form_V'HV+e*S_1'S_1}, we obtain 
\begin{align}
&{\cal V}^\T \!({\cal H}\!+\!\eta {\cal L})^{-1}\!{\cal V}  \approx \nonumber\\
&\!\left[\!\!\!\!\begin{array}{cc}
	  \frac{1}{\eta} \mathcal{D}_1^{-1} & -\frac{1}{\eta}\mathcal{D}_1^{-1} \mathcal{V}_2^\T \mathcal{H} \mathcal{V}_0 {\bar{R}_u}^{-1}  \\ 
	-\frac{1}{\eta} {\bar{R}_u}^{-1} \mathcal{V}_0^\T \mathcal{H} \mathcal{V}_2 \mathcal{D}_1^{-1}  & {\bar{R}_u}^{-1} \!\!+\!\! \frac{1}{\eta} {\bar{R}_u}^{-1} \mathcal{V}_0^\T \mathcal{H} \mathcal{L}^\dagger \!\mathcal{H} \mathcal{V}_0{\bar{R}_u}^{-1}
	\end{array} \!\!\!\!\right] \label{eq:general_inverse_script_K}
\end{align}
We also have that:
\begin{align}
	\mathcal{V}^\T \mathcal{R}_z \mathcal{V} &= \left[\!\!\begin{array}{c}
	\mathcal{V}_2^\T \\ 
	\mathcal{V}_0^\T 
	\end{array} \!\!\right]  \!\mathcal{R}_z \!\left[\!\!\begin{array}{cc}
	\mathcal{V}_2 &\!\!\!\! \mathcal{V}_0
	\end{array}\!\! \right] = \!\!\left[\!\!\!\begin{array}{cc}
	\mathcal{V}_2^\T \mathcal{R}_z\! \mathcal{V}_2 &\!\! \mathcal{V}_2^\T  \mathcal{R}_z \mathcal{V}_0  \\ 
	\mathcal{V}_0^\T \mathcal{R}_z\! \mathcal{V}_2  &\!\! \bar{R}_z
	\end{array} \!\!\!\right] \label{eq:general_VT_Rz_V}
\end{align}
Substituting \eqref{eq:general_inverse_script_K}--\eqref{eq:general_VT_Rz_V} into \eqref{eq:most_general_MSD_expression}, we have that the performance of the algorithm, for large $\eta$, is given by \eqref{eq:large_eta_MSD_simple_terms}.

\ifCLASSOPTIONcaptionsoff
  \newpage
\fi

\bibliographystyle{IEEEtran}
\bibliography{refs}

\begin{thebibliography}{10}
\providecommand{\url}[1]{#1}
\csname url@rmstyle\endcsname
\providecommand{\newblock}{\relax}
\providecommand{\bibinfo}[2]{#2}
\providecommand\BIBentrySTDinterwordspacing{\spaceskip=0pt\relax}
\providecommand\BIBentryALTinterwordstretchfactor{4}
\providecommand\BIBentryALTinterwordspacing{\spaceskip=\fontdimen2\font plus
\BIBentryALTinterwordstretchfactor\fontdimen3\font minus
  \fontdimen4\font\relax}
\providecommand\BIBforeignlanguage[2]{{%
\expandafter\ifx\csname l@#1\endcsname\relax
\typeout{** WARNING: IEEEtran.bst: No hyphenation pattern has been}%
\typeout{** loaded for the language `#1'. Using the pattern for}%
\typeout{** the default language instead.}%
\else
\language=\csname l@#1\endcsname
\fi
#2}}

\bibitem{towfic2015distributedprimal}
Z.~J. Towfic and A.~H. Sayed, ``Distributed primal strategies outperform
  primal-dual strategies over adaptive networks,'' in \emph{Proc. IEEE
  International Conference on Acoustics, Speech and Signal Processing
  {\rm{(ICASSP)}}}, Brisbane, Australia, Apr., 2015, pp. 1--5.

\bibitem{Li20082599}
L.~Li and J.~A. Chambers, ``A new incremental affine projection-based adaptive
  algorithm for distributed networks,'' \emph{Signal Processing}, vol.~88,
  no.~10, pp. 2599--2603, Oct. 2008.

\bibitem{Krishnamurthy}
O.~N. Gharehshiran, V.~Krishnamurthy, and G.~Yin, ``Distributed energy-aware
  diffusion least mean squares: Game-theoretic learning,'' \emph{IEEE Journal
  of Selected Topics in Signal Processing}, vol.~7, no.~5, pp. 821--836, Oct.
  2013.

\bibitem{Takahashi2008139}
N.~Takahashi and I.~Yamada, ``Parallel algorithms for variational inequalities
  over the cartesian product of the intersections of the fixed point sets of
  nonexpansive mappings,'' \emph{J. Approximation Theory}, vol. 153, no.~2, pp.
  139 -- 160, Aug. 2008.

\bibitem{tsianos2012distributed}
K.~I. Tsianos and M.~G. Rabbat, ``Distributed strongly convex optimization,''
  in \emph{Proc. Allerton Conf.}, Allerton, IL, Oct., 2012, pp. 593--600.

\bibitem{cassio}
C.~G. Lopes and A.~H. Sayed, ``Diffusion least-mean squares over adaptive
  networks: Formulation and performance analysis,'' \emph{IEEE Transactions on
  Signal Processing}, vol.~56, no.~7, pp. 3122--3136, Jul. 2008.

\bibitem{Cattivelli10}
F.~S. Cattivelli and A.~H. Sayed, ``{Diffusion LMS strategies for distributed
  estimation},'' \emph{IEEE Trans. Signal Process.}, vol.~58, no.~3, pp.
  1035--1048, Mar. 2010.

\bibitem{NOW_ML}
A.~H. Sayed, ``Adaptation, learning, and optimization over networks,''
  \emph{Foundations and Trends in Machine Learning}, vol.~7, no. 4-5, pp.
  311--801, Jul. 2014.

\bibitem{ram2010distributed}
S.~S. Ram, A.~Nedic, and V.~V. Veeravalli, ``Distributed stochastic subgradient
  projection algorithms for convex optimization,'' \emph{J. Optim. Theory
  Appl.}, vol. 147, no.~3, pp. 516--545, 2010.

\bibitem{chen2011TSPdiffopt}
J.~Chen and A.~H. Sayed, ``Diffusion adaptation strategies for distributed
  optimization and learning over networks,'' \emph{IEEE Trans. Signal
  Process.}, vol.~60, no.~8, pp. 4289--4305, Aug. 2012.

\bibitem{theodoridis2011adaptive}
S.~Theodoridis, K.~Slavakis, and I.~Yamada, ``Adaptive learning in a world of
  projections,'' \emph{IEEE Sig. Proc. Mag.}, vol.~28, no.~1, pp. 97--123, Jan.
  2011.

\bibitem{chouvardas2011adaptive}
S.~Chouvardas, K.~Slavakis, and S.~Theodoridis, ``Adaptive robust distributed
  learning in diffusion sensor networks,'' \emph{IEEE Trans. Signal Process.},
  vol.~59, no.~10, pp. 4692--4707, Oct. 2011.

\bibitem{nedic2009distributed}
A.~Nedic and A.~Ozdaglar, ``Distributed subgradient methods for multi-agent
  optimization,'' \emph{IEEE Trans. Autom. Control}, vol.~54, no.~1, pp.
  48--61, 2009.

\bibitem{dimakis2010gossip}
A.~G. Dimakis, S.~Kar, J.~M.~F. Moura, M.~G. Rabbat, and A.~Scaglione, ``Gossip
  algorithms for distributed signal processing,'' \emph{Proc. IEEE}, vol.~98,
  no.~11, pp. 1847--1864, Nov. 2010.

\bibitem{kar2011converegence}
S.~Kar and J.~M.~F. Moura, ``Convergence rate analysis of distributed gossip
  (linear parameter) estimation: Fundamental limits and tradeoffs,'' \emph{IEEE
  J. Sel. Topics. Signal Process.}, vol.~5, no.~4, pp. 674--690, Aug. 2011.

\bibitem{schizas2009distributed}
I.~D. Schizas, G.~Mateos, and G.~B. Giannakis, ``Distributed {LMS} for
  consensus-based in-network adaptive processing,'' \emph{IEEE Transactions on
  Signal Processing}, vol.~57, no.~6, pp. 2365--2382, Jun. 2009.

\bibitem{rabbat2005quantized}
M.~G. Rabbat and R.~D. Nowak, ``Quantized incremental algorithms for
  distributed optimization,'' \emph{IEEE Journal on Selected Areas in
  Communications}, vol.~23, no.~4, pp. 798--808, Apr. 2005.

\bibitem{dini2012cooperative}
D.~H. Dini and D.~P. Mandic, ``Cooperative adaptive estimation of distributed
  noncircular complex signals,'' in \emph{Proc. Asilomar Conference}, Pacific
  Grove, CA, Nov., 2012, pp. 1518--1522.

\bibitem{Scaglione}
X.~Li and A.~Scaglione, ``Convergence and applications of a gossip-based
  gauss-newton algorithm,'' \emph{IEEE Transactions on Signal Processing},
  vol.~61, no.~21, pp. 5231--5246, Nov. 2013.

\bibitem{SayedProcIEEE}
A.~H. Sayed, ``Adaptive networks,'' \emph{Proceedings of the {IEEE}}, vol. 102,
  no.~4, pp. 460--497, Apr. 2014.

\bibitem{sayed2013diffusion}
A.~H. Sayed, S.-Y. Tu, J.~Chen, X.~Zhao, and Z.~J. Towfic, ``Diffusion
  strategies for adaptation and learning over networks,'' \emph{IEEE Signal
  Process. Mag.}, vol.~30, no.~3, pp. 155--171, May 2013.

\bibitem{sayed2012diffbookchapter}
A.~H. Sayed, ``Diffusion adaptation over networks,'' \emph{{in \em Academic
  Press Library in Signal Processing, vol. 3, R. Chellapa and S. Theodoridis,}
  editors, {\em pp. 323--454, Elsevier, 2014}}.

\bibitem{arrow1958studies}
K.~J. Arrow, L.~Hurwicz, and H.~Uzawa, \emph{Studies in Linear and Non-linear
  Programming}.\hskip 1em plus 0.5em minus 0.4em\relax Stanford University
  Press, CA, 1958.

\bibitem{poliak1987introduction}
B.~Polyak, \emph{{Introduction to Optimization}}.\hskip 1em plus 0.5em minus
  0.4em\relax Optimization Software, NY, 1987.

\bibitem{bertsekas1999nonlinear}
D.~P. Bertsekas, \emph{{Nonlinear Programming}}.\hskip 1em plus 0.5em minus
  0.4em\relax Athena Scientific, MA, 1999.

\bibitem{griva2009linear}
I.~Griva, S.~G. Nash, and A.~Sofer, \emph{Linear and nonlinear optimization},
  2nd~ed.\hskip 1em plus 0.5em minus 0.4em\relax SIAM, 2009.

\bibitem{Paolo}
S.~Barbarossa, S.~Sardellitti, and P.~D. Lorenzo, ``Distributed detection and
  estimation in wireless sensor networks,'' \emph{{in \em Academic Press
  Library in Signal Processing, vol. 2, R. Chellapa and S. Theodoridis,} eds.,
  {\em pp.329--408, Elsevier, 2014}}.

\bibitem{boyd2011distributed}
S.~P. Boyd, N.~Parikh, E.~Chu, B.~Peleato, and J.~Eckstein, ``Distributed
  optimization and statistical learning via the alternating direction method of
  multipliers,'' \emph{Foundations and Trends in Machine Learning}, vol.~3,
  no.~1, pp. 1--122, Jul. 2011.

\bibitem{mateos2009performance}
G.~Mateos, I.~D. Schizas, and G.~B. Giannakis, ``Performance analysis of the
  consensus-based distributed {LMS} algorithm,'' \emph{EURASIP Journal on
  Advances in Signal Processing}, vol. 2009, p.~68, 2009.

\bibitem{mateos2012distributed}
G.~Mateos and G.~B. Giannakis, ``Distributed recursive least-squares: Stability
  and performance analysis,'' \emph{IEEE Transactions on Signal Processing},
  vol.~60, no.~7, pp. 3740--3754, Jul. 2012.

\bibitem{tu2012diffusion}
S.-Y. Tu and A.~H. Sayed, ``Diffusion strategies outperform consensus
  strategies for distributed estimation over adaptive networks,'' \emph{IEEE
  Trans. Signal Process.}, vol.~60, no.~12, pp. 6217--6234, Dec. 2012.

\bibitem{TracySingh}
D.~S. Tracy and R.~P. Singh, ``A new matrix product and its applications in
  partitioned matrix differentiation,'' \emph{Statistica Neerlandica}, vol.~26,
  no.~4, pp. 143--157, Dec. 1972.

\bibitem{Sayed08}
A.~H. Sayed, \emph{Adaptive Filters}.\hskip 1em plus 0.5em minus 0.4em\relax
  Wiley, NJ, 2008.

\bibitem{bapat2010graphs}
R.~B. Bapat, \emph{Graphs and Matrices}.\hskip 1em plus 0.5em minus 0.4em\relax
  Springer, NY, 2010.

\bibitem{cvx_book}
S.~P. Boyd and L.~Vandenberghe, \emph{{Convex Optimization}}.\hskip 1em plus
  0.5em minus 0.4em\relax Cambridge University Press, NY, 2004.

\bibitem{Ribeiro}
A.~Koppel, F.~Y. Jakubiec, and A.~Ribeiro, ``A saddle point algorithm for
  networked online convex optimization,'' in \emph{Proc. IEEE {\rm{ICASSP}}},
  Florence, Italy, May, 2014, pp. 8292--8296.

\bibitem{Sergio}
S.~{Valcarcel Macua}, J.~Chen, S.~Zazo, and A.~H. Sayed, ``Distributed policy
  evaluation under multiple behavior strategies,'' \emph{Available as
  {arXiv:1312.7606v1} [cs.MA]}, Dec. 2013.

\bibitem{jianshu_part1}
J.~Chen and A.~H. Sayed, ``On the learning behavior of adaptive networks ---
  {Part I}: {Transient} analysis,'' \emph{\emph{to appear in} IEEE Trans.
  Information Theory}, 2015.

\bibitem{jianshu_part2}
------, ``On the learning behavior of adaptive networks --- {Part II}:
  {Performance} analysis,'' \emph{\emph{to appear in} IEEE Trans. Information
  Theory}, 2015.

\bibitem{antsaklis2005linear}
P.~J. Antsaklis and A.~N. Michel, \emph{{Linear Systems}}.\hskip 1em plus 0.5em
  minus 0.4em\relax Birkh{\"a}user, MA, 2005.

\bibitem{kailath1980linear}
T.~Kailath, \emph{Linear Systems}.\hskip 1em plus 0.5em minus 0.4em\relax
  Prentice-Hall Inc, NJ, 1980.

\bibitem{HornJohnsonVol1}
R.~A. Horn and C.~R. Johnson, \emph{{Matrix Analysis}}.\hskip 1em plus 0.5em
  minus 0.4em\relax Cambridge University Press, NY, 1991.

\bibitem{fiedler1973algebraic}
M.~Fiedler, ``Algebraic connectivity of graphs,'' \emph{Czechoslovak
  Mathematical Journal}, vol.~23, no.~2, pp. 298--305, 1973.

\bibitem{fiedler1975property}
------, ``A property of eigenvectors of nonnegative symmetric matrices and its
  application to graph theory,'' \emph{Czechoslovak Mathematical Journal},
  vol.~25, no.~4, pp. 619--633, 1975.

\bibitem{Bertrand20131106}
A.~Bertrand and M.~Moonen, ``Distributed computation of the fiedler vector with
  application to topology inference in ad hoc networks,'' \emph{Signal
  Processing}, vol.~93, no.~5, pp. 1106 -- 1117, May 2013.

\bibitem{BertrandSPM}
------, ``Seeing the bigger picture: How nodes can learn their place within a
  complex ad hoc network topology,'' \emph{IEEE Signal Processing Magazine},
  vol.~30, no.~3, pp. 71--82, May 2013.

\bibitem{laub}
A.~J. Laub, \emph{Matrix Analysis for Scientists and Engineers}.\hskip 1em plus
  0.5em minus 0.4em\relax SIAM, PA, 2005.

\bibitem{Grone2}
R.~Grone and R.~Merris, ``The {Laplacian} spectrum of a graph {II},''
  \emph{SIAM Journal on Discrete Mathematics}, vol.~7, no.~2, pp. 221--229, May
  1994.

\bibitem{anderson1985eigenvalues}
W.~N. Anderson~Jr and T.~D. Morley, ``Eigenvalues of the laplacian of a
  graph,'' \emph{Linear and Multilinear Algebra}, vol.~18, no.~2, pp. 141--145,
  1985.

\bibitem{zhao2012performance}
X.~Zhao and A.~H. Sayed, ``Performance limits for distributed estimation over
  {LMS} adaptive networks,'' \emph{IEEE Trans. Signal Process.}, vol.~60,
  no.~10, pp. 5107--5124, Oct. 2012.

\bibitem{boyd2004fastest}
S.~Boyd, P.~Diaconis, and L.~Xiao, ``Fastest mixing {Markov} chain on a
  graph,'' \emph{SIAM Rev.}, vol.~46, no.~4, pp. 667--689, Dec. 2004.

\end{thebibliography}

\end{document}